\let\oldproof\proof
\def\proof{\oldproof\unskip}
\newtheorem{thm}[subsection]{Theorem}
\newtheorem{lem}[subsection]{Lemma}
\newtheorem{prop}[subsection]{Proposition}
\newtheorem{cor}[subsection]{Corollary}
\newtheorem{ques_num}[subsection]{Question}
\theoremstyle{remark}
\newtheorem*{rmk}{Remark}
\theoremstyle{definition}
\newtheorem{defn}[subsection]{Definition}
\newtheorem*{nota}{Notation}
\newtheorem{eg}[subsection]{Example}
\newtheorem{egs}[subsection]{Examples}
\newcommand{\id}{\mathrm{id}}
\newcommand{\iso}{\cong}
\newcommand{\bd}{\bm{\mathrm{d}}}
\newcommand{\bdelta}{\bm{\delta}}
\newcommand{\bpartial}{\bm{\partial}}
\newcommand{\bzero}{\bm{0}}
\renewcommand{\ev}{\mathrm{ev}}
\renewcommand{\leq}{\leqslant}
\renewcommand{\geq}{\geqslant}
\renewcommand{\epsilon}{\varepsilon}
\renewcommand{\phi}{\varphi}
\newcommand{\m}{\mathfrak{m}}
\newcommand{\Z}{\mathbb{Z}}
\newcommand{\N}{\mathbb{N}}
\newcommand{\Q}{\mathbb{Q}}
\newcommand{\rest}{{\upharpoonright}}
\newcommand{\ddt}{\frac{\dd}{\dd t}}
\newcommand{\bt}{\mathbf{t}}
\newcommand{\ddbt}{\frac{\dd}{\dd\mathbf{t}}}
\newcommand{\dalg}{\mathrm{dalg}}
\newcommand{\DRing}{\mathbf{DRing}}
\newcommand{\Ring}{\mathbf{Ring}}
\newcommand{\Alg}{\mathbf{\text-Alg}}
\newcommand{\DAlg}{\mathbf{\text-DAlg}}
\newcommand{\dfg}{\mathrm{dfg}}
\def\Ind#1#2{#1\setbox0=\hbox{$#1x$}\kern\wd0\hbox to 0pt{\hss$#1\mid$\hss}
\lower.9\ht0\hbox to 0pt{\hss$#1\smile$\hss}\kern\wd0}
\def\notind#1#2{#1\setbox0=\hbox{$#1x$}\kern\wd0
\hbox to 0pt{\mathchardef\nn=12854\hss$#1\nn$\kern1.4\wd0\hss}
\hbox to 0pt{\hss$#1\mid$\hss}\lower.9\ht0 \hbox to 0pt{\hss$#1\smile$\hss}\kern\wd0}
\title{Taylor Morphisms}
\author{Gabriel Ng}
\address{University of Manchester, Oxford Road, Manchester M13 9PL, United Kingdom}
\curraddr{Universit\'e de Mons, Place du Parc 20, 7000 Mons, Belgium}
\email{gabriel.ng@umons.ac.be}
\thanks{The research for this article was conducted whilst the author was supported by the University of Manchester through a President's Doctoral Scholarship and the PGR Incentive Fund.}
\date{6th March 2026}
\subjclass{Primary: 13N99, 12H05. Secondary: 03C60}
\keywords{differential rings, Taylor morphisms, Hurwitz series, differentially large fields}
\begin{document}

\begin{abstract}
    We study generalised Taylor morphisms, functors which construct differential ring homomorphisms from ring homomorphisms in a uniform way, analogous to the Taylor expansion for smooth functions. We generalise the construction of the twisted Taylor morphism by Le\'on S\'anchez and Tressl to arbitrary differential rings by `twisting' the ring of Hurwitz series, and prove that this results in a functor which is the right adjoint to a certain forgetful functor. We therefore give a concrete characterisation of all generalised Taylor morphisms over all differential rings with finitely many commuting derivations.
\end{abstract}

\maketitle

\section{Introduction}

In differential algebra, the classical Taylor morphism (Example \ref{classical_tm_eg}) is a natural algebraic analogue of the Taylor series expansion of a smooth function at a point, i.e. for a differential ring $(A, \bdelta)$ (in $m$ commuting derivations), and ring homomorphism $\phi: A \to K$, where $K$ is a $\Q$-algebra, the Taylor morphism of $\phi$  is the differential ring homomorphism
\begin{align}
    T_\phi: (A, \bdelta) &\to \left( K[[\bt]], \ddbt \right) \\
    a &\mapsto \sum_{\alpha \in \N^m} \frac{\phi(\bdelta^\alpha a)}{\alpha!} \bt^\alpha.
\end{align}
 This allows us to construct differential ring homomorphisms into the ring of formal power series $K[[\bt]]$ from non-differential ring homomorphisms into the ring of coefficients $K$ in a uniform way.

This construction also has a certain categorical interpretation: the functor which sends a $\Q$-algebra $K$ to the differential ring of formal power series $(K[[\bt]], \ddbt)$ is the right adjoint to the forgetful functor from the category of differential $\Q$-algebras to the category of $\Q$-algebras, and the natural bijection of hom-sets is given precisely by the classical Taylor morphism.


In \cite{Keigher1975} and \cite{Keigher1997}, Keigher studies the properties of the \emph{differential ring of Hurwitz series} $(H(K), \bd_K)$ over an arbitrary commutative ring $K$ of any characteristic, along with the functor $H$ which sends a ring $K$ to the ring of Hurwitz series over $K$. Associated to this is a construction which we call the \emph{`Hurwitz morphism'}, which is analogous to the classical Taylor morphism in the case where $K$ is not necessarily a $\Q$-algebra (Example \ref{hurwitz_morphism_eg}). The functor $H$ is the right adjoint to the forgetful functor $U$ from the category of differential rings to the category of rings, and its restriction to the subcategory of $\Q$-algebras is precisely the formal power series construction above.

In \cite{SanchezTressl2020}, Le\'on S\'anchez and Tressl construct the \emph{twisted Taylor morphism} as a tool to study differentially large fields (Example \ref{ttm_eg}). This is a modification of the classical Taylor morphism in the following sense: suppose that $(K, \bpartial)$ is a differential $\Q$-algebra, and $\phi: A \to K$ is a ring homomorphism. Then, the twisted Taylor morphism of $\phi$ is a differential ring homomorphism
\[
T^*_\phi: (A, \bdelta) \to \left(K[[\bt]], \bpartial + \ddbt \right)
\]
where $\bpartial$ is the derivation on $K[[\bt]]$ which acts by $\bpartial$ on coefficients. Observe that in this case, $(K[[\bt]], \bpartial + \ddbt)$ is a differential $(K, \bpartial)$-algebra with structure map given by the natural inclusion. Further, the twisted Taylor morphism preserves $K$-algebra homomorphisms, i.e. if $A$ is a differential $(K, \bpartial)$-algebra, and $\phi$ is a $K$-algebra homomorphism, then $T^*_\phi$ is a differential $(K, \partial)$-algebra homomorphism. The main application of this in \cite{SanchezTressl2020} is to prove various equivalent characterisations of differential largeness in terms of existential closure conditions.

In this article, we show that all of these constructions are special cases of functors which we will call (generalised) \emph{Taylor morphisms} (Definiton \ref{taylor_morphism_defn}). Formally, for a differential ring $K$ and a differential $K$-algebra $L$, a \textbf{$K$-Taylor morphism} for $L$ is a map $T$ which sends pairs $(A, \phi)$, where $A$ is a differential ring, and $\phi: A \to K$ is a ring homomorphism, to a differential ring homomorphism $T_\phi: A \to L$, and satisfies the following axioms:
\begin{itemize}
    \item[(TM1)] If $\phi: A \to K$ is differential, then $T_\phi = \eta_L \circ \phi$, where $\eta_L: K \to L$ is the structure map of $L$ as a $K$-algebra;
    \item[(TM2)] If $A, B$ are differential rings, $\phi: A \to K$ and $\psi: B \to K$ are ring homomorphisms and $\chi: A \to B$ is a differential ring homomorphism such that $\phi = \psi \circ \chi$, then $T_\phi = T_\psi \circ \chi$.
\end{itemize}
 It is not difficult to see that the condition (TM2) implies that a $K$-Taylor morphism is a functor between two comma categories (Lemma \ref{k_tm_is_functor_lem}). A consequence of this axiomatisation is that $K$-Taylor morphisms preserve $K$-algebra homomorphisms, i.e. if $A$ is a differential $K$-algebra, and $\phi: A \to K$ is a $K$-algebra homomorphism, then $T_\phi: K \to L$ is a differential $K$-algebra homomorphism (Proposition \ref{k_tm_preserves_k_algs_prop}).

By equipping a $\Q$-algebra $K$ with a tuple of trivial derivations $\bzero$, the classical Taylor morphism can be viewed as a $K$-Taylor morphism for $(K[[\bt]], \ddbt)$; similarly, when $K$ is an arbitrary ring with trivial derivations, the Hurwitz morphism is a $K$-Taylor morphism for $(H(K), \bd_K)$ where $K$ is an arbitrary ring; and finally, the twisted Taylor morphism is a $K$-Taylor morphism for $(K[[\bt]], \bdelta + \ddbt)$, where $K$ is a differential $\Q$-algebra equipped with a tuple $\bdelta$ of possibly non-trivial derivations.

In Section \ref{restrictions_section}, we show that every $K$-Taylor morphism is completely determined by its restriction to differentially finitely generated $K$-algebras (Proposition \ref{ftm_implies_tm}). This allows us, in Section \ref{utm_section}, to construct a (small) diagram in the category of differential $K$-algebras whose cocones have a bijective correspondence with $K$-Taylor morphisms (Proposition \ref{taylor_morphism_is_cone}). From this, we show in Theorem \ref{universal_TM_construction_thm} that every generalised Taylor morphism is the image of a \emph{universal} Taylor morphism, which corresponds with the colimit of this diagram.

In Section \ref{eval_maps_section}, we consider the notion of an \emph{evaluation map} for a $K$-Taylor morphism $T$ for $L$ (Definition \ref{eval_maps_defn}), which is a generalisation of the `evaluation at 0' map $\ev_0: K[[\bt]] \to K$ for the twisted and classical Taylor morphisms. An evaluation map $\ev: L \to K$ for $T$ (if one exists), is in some sense the inverse to the Taylor morphism $T$ (Lemma \ref{ev_commutes_tm}).
In Proposition \ref{utm_admits_eval_map_prop}, we construct an evaluation map for the universal Taylor morphism, and as a consequence we prove that the universal Taylor morphism is characterised by the existence of such a map (Theorem \ref{universal_tm_char_by_ev_thm}).
This shows that the twisted Taylor morphism and Hurwitz morphism are the universal Taylor morphism for a differential $\Q$-algebra, and arbitrary constant ring, respectively (Examples \ref{eval_map_egs}). Another consequence is that the ring of differentially algebraic power series over $K$ does not admit a $K$-Taylor morphism (Proposition \ref{diff_alg_power_series_no_tm_prop}).

We prove also that the universal Taylor morphism naturally results in a functor $F$ from the category $\DRing_\Ring$, whose objects are differential rings and morphisms are ring homomorphisms, to the category of differential rings $\DRing$ (Lemma \ref{UTM_is_functor_lemma}). 
This functor is the right adjoint to the inclusion $\DRing \to \DRing_\Ring$ (Theorem \ref{utm_functor_is_right_adjoint_thm}), and the natural bijection of hom-sets is given by the universal Taylor morphism with inverse given by compositon with the evaluation map.

In the final section, we give a concrete characterisation of the universal Taylor morphism, by generalising the construction of the twisted Taylor morphism by Le\'on S\'anchez and Tressl to rings of arbitrary characteristic in finitely many commuting derivations. We achieve this by constructing a twisting for the Hurwitz morphism (Corollary \ref{hurwitz_twisting_cor}). In Theorem \ref{twisted_hurwitz_is_utm_thm}, we show that the twisted Hurwitz morphism coincides with the universal Taylor morphism constructed in Section \ref{utm_section} by explicitly exhibiting an evaluation map. This gives a complete explicit characterisation of all $K$-Taylor morphisms for any differential ring $K$.

Throughout this article, all rings will be assumed to be commutative and unital. Rings and fields may be of arbitrary characteristic, unless otherwise specified.

\section{Differential Algebraic Preliminaries}

Let $K$ be a ring. Recall that a \textbf{derivation} on $K$ is an additive map $\delta: K \to K$ satisfying the product rule, i.e. for any $a, b \in K$,
\[
\delta(ab) = \delta(a)b + a \delta(b).
\]
A \textbf{differential ring} is a ring $K$ equipped with a tuple $\bdelta= (\delta_1,...,\delta_m)$ of $m$ \textbf{commuting} derivations, i.e. for any $1 \leq i \leq j \leq m$, $\delta_i \circ \delta_j = \delta_j \circ \delta_i$. A \textbf{differential subring} of $(K, \bdelta)$ is a subring $L$ of $K$ such that $L$ is closed under $\delta_i$ for each $i$. A \textbf{differential field} is a differential ring which is a field.
If no confusion arises, we will sometimes drop the reference to the derivation of a differential field, e.g. write $K$ for $(K, \bdelta)$.

A \textbf{differential ring homomorphism} $(K, \bdelta) \to (L, \bpartial)$ is a ring homomorphism $\phi: K \to L$ such that for any $a \in K$ and $i = 1,...,m$, $\phi(\delta_i(a)) = \partial_i(\phi(a))$.

The \textbf{constant subring} $C_K$ of the differential ring $(K, \bdelta)$ is the common kernel of $\bdelta$, i.e.
\[
C_K = \{a \in K: \delta_i (a) = 0  \text{ for each }i\}.
\]
The \textbf{trivial derivation} on a ring $K$ is the constant function $0: K \to K$ which maps every element to $0$. Write $\bzero$ for the $m$-tuple of trivial derivations. We say that a differential ring $(K, \bdelta)$ is \textbf{constant} if $\bdelta = \bzero$, i.e. $C_K = K$.

For a differential ring $(K, \bdelta)$, a \textbf{differential $K$-algebra} is a differential ring $(A, \bpartial)$ equipped with a differential ring homomorphism $\eta_A: K \to A$ known as the \textbf{structure map}. For differential $K$-algebras $A, B$, a \textbf{differential $K$-algebra homomorphism} $\phi: A \to B$ is a $K$-algebra homomorphism which is also a differential ring homomorphism.

Borrowing terminology from geometry, if $K$ is a (differential) ring and $A$ a (differential) $K$-algebra, a \textbf{(differential) $K$-rational point} (or simply \textbf{(differential) $K$-point}) is a (differential) $K$-algebra homomorphism $\phi: A \to K$.

For a differential ring $(A, \bdelta)$, a \textbf{differential ideal} of $A$ is an ideal $I \subseteq A$ such that $I$ is closed under $\bdelta$. 
That is, for any $i$ and any $a \in I$, $\delta_i (a)  \in I$.
A differential ideal $I \subseteq A$ is said to be \textbf{prime} (respectively, \textbf{maximal}) if $I$ is prime (respectively, maximal) as an ideal.
The quotient $A/I$ is naturally equipped with derivations inherited from $A$, i.e. $\delta_i(a + I) = \delta_i(a) + I$. 
The kernel of a differential ring homomorphism is a differential ideal.

\begin{nota}

    For convenience, we will also use multi-index notation. A \textbf{multi-index} (of length $m$) is a tuple $\alpha = (\alpha_1,...,\alpha_m) \in \N^m$. We write the following:
    \begin{itemize}
        \item For $\alpha, \beta \in \N^m$, $\alpha \leq \beta$ if $\alpha_i \leq \beta_i$ for each $i$.
        \item For $\alpha, \beta \in \N^m$, $\alpha + \beta = (\alpha_1+\beta_1,...,\alpha_m + \beta_m)$. Similarly, if $\alpha \geq \beta$, $\alpha - \beta \coloneqq (\alpha_1 - \beta_1,...,\alpha_m - \beta_m)$.
        \item For $\alpha \in \N^m$, $\alpha! \coloneqq \alpha_1!...\alpha_m!$. For $\beta \leq \alpha \in \N^m$, $\binom{\alpha}{\beta} = \frac{\alpha!}{\beta! (\alpha - \beta)!} = \prod_i \binom{\alpha_i}{\beta_i}$
        \item If $\bt = (t_1,...,t_m)$ is a tuple of indeterminates of length $m$, and $\alpha \in \N^m$, $\bt^\alpha \coloneqq t_1^{\alpha_1}...t_m^{\alpha_m}$
        \item For $\bdelta = (\delta_1,...,\delta_m)$ a tuple of derivations, $\bdelta^\alpha \coloneqq \delta_1^{\alpha_1}...\delta_m^{\alpha_m}$.
        \item If $a$ is an element of a ring $K$ and $\alpha \in \N^m$, $a^\alpha \coloneqq a^{|\alpha|}$, where $|a| = \sum_{i=1}^m a_i$.
    \end{itemize}
\end{nota}

The \textbf{ring of differential polynomials} over a differential ring $(K, \bdelta)$ in indeterminates $\bar{x} = (x_i)_{i \in I}$ is the polynomial ring
\[
K\{ \bar{x} \} \coloneqq K[\bdelta^\alpha x_i: \alpha \in \N^m, i \in I]
\]
equipped with derivations $\bdelta$ naturally extending $\bdelta$ on $K$, and satisfy $\bdelta^\alpha(\bdelta^\beta x_i) = \bdelta^{\alpha + \beta} x_i$. A differential polynomial $f(\bar{x}) \in K\{\bar{x}\}$ can be evaluated at $\bar{a} \in (L, \bpartial)$, where $(L, \bpartial)$ is a differential $(K, \bdelta)$-algebra, in the natural way. We say that $a \in (L, \bpartial)$ is \textbf{differentially algebraic over $K$} if there is some nonzero $f \in K\{x\}$ such that $f(a) = 0$, and \textbf{differentially transcendental}, otherwise.

A differential $K$-algebra $A$ is said to be \textbf{differentially finitely generated} if there is a surjective $K$-algebra homomorphism $K\{\bar{x}\} \to A$, where $\bar{x}$ is a finite tuple of indeterminates.

\begin{eg}
    Let $(K, \bdelta)$ be a differential ring. Write $\bt = (t_1,...,t_m)$, and write $K[[\bt]] = K[[t_1,...,t_m]]$ for the ring of formal power series in $m$ indeterminates with coefficients in $K$. The elements of $K[[\bt]]$ are formal sums of the form
    \[
    \sum_{\alpha \in \N^m} a_\alpha \bt^\alpha
    \]
    with $a_\alpha \in K$, and addition and multiplication defined coordinatewise and by the usual Cauchy product, respectively.

    The ring $K[[\bt]]$ is naturally equipped with the commuting tuple of formal derivatives
    \[
    \ddbt = \left( \frac{\dd}{\dd t_1},...,\frac{\dd}{\dd t_m} \right).
    \]
\end{eg}

\section{Some Categorical Facts} 
We define a number of commonly occurring categories and constructions, and collect some useful properties of the category of differential $K$-algebras.

\begin{defn}
    We fix the following notation for certain categories and functors:
    \begin{itemize}
        \item The category $\Ring$ is the category of (commutative) rings.
        \item The category $\DRing$ denotes the category of differential rings.
        \item For a fixed differential ring $K$, The category $K\Alg$ is the category of $K$-algebras.
        \item The category $K\DAlg$ is the category of differential $K$-algebras.
        \item The category $K\DAlg_\dfg$ is the category of differentially finitely generated differential $K$-algebras.
        \item The category $\mathbf{1}$ is the category containing a unique object $\bullet$ with the only morphism being the identity $\id: \bullet \to \bullet$.
        \item $U$ denotes a forgetful functor, usually $\DRing \to \Ring$.
        \item For an object $K$ in an arbitrary category $\mathcal{C}$, the functor $1_K: \mathbf{1} \to \mathcal{C}$ sends the unique object in $\mathbf{1}$ to $K$.
        \item For an arbitrary category $\mathcal{C}$, $\id_\mathcal{C}: \mathcal{C} \to \mathcal{C}$ is the identity functor on $\mathcal{C}$.
    \end{itemize}
\end{defn}

We recall a standard construction from category theory, known as \emph{comma categories}, which we will use extensively. We then present a number of useful facts.

\begin{defn}[Comma Category]
    Let $\mathcal{C}, \mathcal{D}, \mathcal{E}$ be categories, and $F: \mathcal{C} \to \mathcal{E}$ and $G: \mathcal{D} \to \mathcal{E}$ be functors. The \textbf{comma category} $(F \downarrow G)$ is the category with objects given by triples of the form $(c, d, \phi)$, where $c \in \mathcal{C}$, $d \in \mathcal{D}$, and $\phi: F(c) \to G(d)$ is a morphism in $\mathcal{E}$. A morphism from $(c_1, d_1, \phi_1)$ to $(c_1, d_1, \phi_1)$ in $(F \downarrow G)$, is a pair $(\alpha, \beta)$, where $\alpha: c_1 \to c_2$ is a morphism in $\mathcal{C}$, $\beta: d_1 \to d_2$ is a morphism  in $\mathcal{D}$, such that the following square commutes:
    \[
    \begin{tikzcd}
        F(c_1) \ar[r, "F(\alpha)"] \ar[d, "\phi_1"]& F(c_2) \ar[d, "\phi_2"]\\
        G(d_1) \ar[r, "G(\beta)"]& G(d_2)
    \end{tikzcd}
    \]
    The composition of morphisms $(\alpha_1, \beta_1): (c_1, d_1, \phi_1) \to (c_2, d_2, \phi_2)$ and $(\alpha_2, \beta_2): (c_2, d_2, \phi_2) \to (c_3, d_3, \phi_3)$ is defined by
    \[
    (\alpha_2, \beta_2) \circ (\alpha_1, \beta_1) = (\alpha_2 \circ \alpha_1, \beta_2 \circ \beta_1).
    \]

    The \textbf{domain functor} $D: (F \downarrow G) \to \mathcal{C}$ is the functor which maps objects by $(c, d, \phi) \mapsto c$ and morphisms by $(\alpha, \beta) \mapsto \alpha$. 
\end{defn}

\begin{eg}
    For an arbitrary category $\mathcal{C}$ and object $K$ in $\mathcal{C}$, the slice category $\mathcal{C}/K$ is (isomorphic to) the comma category $(\id_\mathcal{C} \downarrow 1_K)$.
\end{eg}

\begin{thm}[{\cite[Section 5.2, Theorem 3]{RydeheardBurstall1988}}] \label{certain_comma_category_is_cocomplete_thm}
    Let $\mathcal{C}, \mathcal{D}, \mathcal{E}$ be categories, with $\mathcal{C}$ and $\mathcal{D}$ cocomplete. Let $F: \mathcal{C} \to \mathcal{E}$ and $G: \mathcal{D} \to \mathcal{E}$ be functors with $F$ cocontinuous. Then, the comma category $(F \downarrow G)$ is cocomplete. In particular, the domain functor $D: (F \downarrow G) \to \mathcal{C}$ is cocontinous.
\end{thm}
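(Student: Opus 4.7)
The plan is to construct colimits in $(F \downarrow G)$ componentwise and then invoke the universal property of $F(c)$ as a colimit to pin down the connecting morphism. Let $J: \mathcal{I} \to (F \downarrow G)$ be a small diagram, writing $J(i) = (c_i, d_i, \phi_i)$ and $J(f) = (\alpha_f, \beta_f)$ for a morphism $f: i \to j$ in $\mathcal{I}$.

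First, using cocompleteness of $\mathcal{C}$ and $\mathcal{D}$, I form the colimits $(c, \iota_i) = \text{colim}_i c_i$ and $(d, \kappa_i) = \text{colim}_i d_i$ of the obvious component diagrams $\mathcal{I} \to \mathcal{C}$ and $\mathcal{I} \to \mathcal{D}$. Since $F$ is cocontinuous, $(F(c), F(\iota_i))$ is a colimit of the diagram $i \mapsto F(c_i)$ in $\mathcal{E}$. I then observe that the family $G(\kappa_i) \circ \phi_i: F(c_i) \to G(d)$ is a cocone over this same diagram: for $f: i \to j$, the commuting square defining $J(f)$ as a morphism in $(F \downarrow G)$ gives
\begin{equation}
G(\kappa_j) \circ \phi_j \circ F(\alpha_f) = G(\kappa_j) \circ G(\beta_f) \circ \phi_i = G(\kappa_j \circ \beta_f) \circ \phi_i = G(\kappa_i) \circ \phi_i,
\end{equation}
where the last equality uses that $\kappa$ is a cocone. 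By the universal property of $F(c)$, there is a unique $\phi: F(c) \to G(d)$ with $\phi \circ F(\iota_i) = G(\kappa_i) \circ \phi_i$ for all $i$. This last equation says precisely that each $(\iota_i, \kappa_i)$ is a morphism $(c_i, d_i, \phi_i) \to (c, d, \phi)$ in $(F \downarrow G)$, so we obtain a candidate cocone in $(F \downarrow G)$.

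Next, I verify the universal property. Given another cocone $(\gamma_i, \mu_i): (c_i, d_i, \phi_i) \to (c', d', \phi')$, the families $\gamma_i$ and $\mu_i$ are themselves cocones in $\mathcal{C}$ and $\mathcal{D}$, so they factor uniquely through unique morphisms $\gamma: c \to c'$ and $\mu: d \to d'$. The only thing to check is that $(\gamma, \mu)$ is a morphism in $(F \downarrow G)$, i.e.\ that $\phi' \circ F(\gamma) = G(\mu) \circ \phi$. Both sides, precomposed with $F(\iota_i)$, are easily shown to equal $G(\mu_i) \circ \phi_i$, using $F(\gamma) \circ F(\iota_i) = F(\gamma_i)$, $G(\mu) \circ G(\kappa_i) = G(\mu_i)$, the defining equation for $\phi$, and the fact that $(\gamma_i, \mu_i)$ is a morphism in $(F \downarrow G)$. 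Since $F(c)$ is a colimit, this forces the two morphisms $F(c) \to G(d')$ to agree; uniqueness of $(\gamma, \mu)$ follows from the uniqueness of $\gamma$ and $\mu$ separately.

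Finally, the formula $(c, d, \phi) \mapsto c$ for the colimit shows that $D$ preserves the colimit of $J$, so $D$ is cocontinuous. The main obstacle is really just the bookkeeping in the previous paragraph: one must be careful to apply the uniqueness clause of the universal property of $F(c)$ in the correct category $\mathcal{E}$, which is why cocontinuity of $F$ (rather than mere existence of $F(c)$) is essential.
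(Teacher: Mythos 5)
Your proof is correct and complete: the componentwise construction of the colimit, the use of cocontinuity of $F$ both to obtain the connecting morphism $\phi\colon F(c)\to G(d)$ and to verify the commuting square for the induced map $(\gamma,\mu)$, is exactly the standard argument. The paper itself does not prove this statement but cites it from Rydeheard--Burstall, and your argument is essentially the proof given there, so there is nothing to reconcile.
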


We now record various properties of the category $K\DAlg$ of differential $K$-algebras. In particular, we show that it is cocomplete, and the forgetful functor $U: K\DAlg \to K\Alg$ is cocontinuous.

\begin{prop} \label{cat_of_diff_K_algs_is_cocomplete_prop}
    Let $K$ be a differential ring. Then the category of differential $K$-algebras is cocomplete.
\end{prop}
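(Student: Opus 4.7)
The plan is to invoke the standard criterion that a category is cocomplete if and only if it admits all small coproducts and all coequalisers, and then to construct each directly by lifting the corresponding constructions from the cocomplete category $K\Alg$. Once both are exhibited, cocompleteness follows from the general colimit-from-coproducts-and-coequalisers theorem.

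For coproducts of a family $\{(A_i, \bdelta^{(i)})\}_{i \in I}$ of differential $K$-algebras, I would take the coproduct $\bigotimes_{i \in I}^K A_i$ in $K\Alg$, presented as the filtered colimit of finite tensor products $A_{i_1} \otimes_K \cdots \otimes_K A_{i_n}$ indexed by finite subsets of $I$. On each finite tensor product, I define commuting derivations by the Leibniz rule on simple tensors,
\[
\delta_j(a_{i_1} \otimes \cdots \otimes a_{i_n}) = \sum_{k=1}^n a_{i_1} \otimes \cdots \otimes \delta_j^{(i_k)}(a_{i_k}) \otimes \cdots \otimes a_{i_n},
\]
and check that these commute using commutativity on each factor. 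These derivations are compatible with the transition maps of the filtered colimit (inserting a new factor $1$ acts as zero on that slot), yielding a differential structure on $\bigotimes_K A_i$. The universal property is then immediate from that of the $K$-algebra coproduct, together with a direct verification that the induced comparison map to any cocone of differential $K$-algebra maps is itself differential, since its action on simple tensors is determined by its action on each factor. For coequalisers of a parallel pair $f, g: A \rightrightarrows B$, I would let $I \subseteq B$ be the smallest differential ideal of $B$ containing $\{f(a) - g(a) : a \in A\}$, namely the ordinary ideal generated by $\{\bdelta^\alpha(f(a) - g(a)) : a \in A, \alpha \in \N^m\}$. Then $B/I$ is naturally a differential $K$-algebra, the canonical projection coequalises $f$ and $g$, and any differential $K$-algebra homomorphism $h: B \to C$ with $h \circ f = h \circ g$ has differential kernel containing $I$, hence factors uniquely through $B/I$.

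The main obstacle is the well-definedness of the coproduct derivations on the tensor product, specifically the verification that the Leibniz-rule formula descends through $K$-balancing: one must check that $\delta_j(\lambda a_{i_1} \otimes a_{i_2}) = \delta_j(a_{i_1} \otimes \lambda a_{i_2})$ for $\lambda \in K$, and this is where the hypothesis that each structure map $K \to A_i$ is a differential ring homomorphism is essential. Everything else — commutativity of the $\delta_j$, the Leibniz rule for products in $\bigotimes_K A_i$, and the universal properties — is a routine verification on simple tensors extended by linearity and multiplicativity.
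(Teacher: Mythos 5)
Your proposal is correct and follows essentially the same route as the paper: reduce to coproducts and coequalisers, realise coproducts as (filtered colimits of finite) tensor products over $K$ with the Leibniz-rule derivations, and realise coequalisers as the quotient by a differential ideal generated by the elements $f(a)-g(a)$. The only cosmetic difference is that you close the generating set under $\bdelta$ before generating the ideal, whereas the paper observes that the ordinary ideal generated by $\{f(a)-g(a)\}$ is already differential because $\partial_i(f(a)-g(a)) = f(\delta_i a) - g(\delta_i a)$; the two ideals coincide.
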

\begin{proof}
    By \cite[Corollary V.2.2]{MacLane1978}, it suffices to show that the category of differential $K$-algebras has coequalisers of all pairs of arrows and all small coproducts.

    We first show that for all differential $K$-algebras $(A, \bdelta), (B, \bpartial)$ and pairs of differential $K$-algebra homomorphisms $f, g: A \to B$, there is a differential $K$-algebra $C$ and morphism $h: B \to C$ such that $h\circ f = h \circ g$, and, for any differential $K$-algebra $D$ and morphism $s: B \to D$ such that $s \circ f = s \circ g$, there is a unique morphism $e: C \to D$ such that the following commutes:
    \[
    \begin{tikzcd}
        A \ar[r, shift left, "f"] \ar[r, shift right, "g"'] & B \ar[r, "h"] \ar[rd, "s"'] & C \ar[d, dashed, "e"] \\
        & & D
    \end{tikzcd}
    \]
    This is a generalisation of the analogous result for commutative rings. Let $I$ be the ideal of $B$ generated by all elements of the form $f(a) - g(a)$, where $a \in A$. Then, $I$ is a differential ideal of $B$: as $f$ and $g$ are differential, for any $i$, $\partial_i(f(a)-g(a)) = f(\delta_i(a)) - g(\delta_i(a))$. Now, let $C$ be the differential $K$-algebra $B/I$ and let $h: B \to C$ be the quotient map. Clearly, by construction, $h \circ f = h \circ g$.

    Now, for any element $b+I \in C$, define $e: C \to D$ by setting $e(b+I) = s(b)$. This is well defined as $s(f(a)) = s(g(a))$ for all $a \in A$, and thus $s(b) = 0$ for any $b \in I$. Further, $e$ is differential, as $s$ is differential.
    Uniqueness is clear, as any $K$-algebra homomorphism $\phi: C \to D$ such that the above diagram commutes necessarily satisfies the condition that $\phi(b + I) = s(b)$.

    For the second part, we observe that for pairs of differential $K$-algebras $(A, \bdelta)$, $(B, \bpartial)$, their coproduct is given by the tensor product $A \otimes_K B$, with derivations $\bd$ given by $ \dd_i(a\otimes b) = \delta_i a \otimes b + a \otimes \partial_i b$.
    It is a straightforward computation to see that $\bd$ is a commuting family of derivations on $A \otimes_K B$.

    To see that $(A \otimes_K B, \bd)$ is indeed the coproduct, suppose that $C$ is a differential $K$-algebra with morphisms $f: A \to C$ and $g: B \to C$, and let $\iota_A, \iota_B$ denote the canonical morphisms of $A, B$ into their tensor product. 
    Then, defining $f\cdot g: A \otimes_K B \to C$ by setting $(f\cdot g)(a \otimes b) = f(a)g(b)$, we obtain the unique morphism $A \otimes_K B \to C$ such that $f = (f \cdot g) \circ \iota_A$ and $g = (f \cdot g) \circ \iota B$. To verify that this is differential, observe the following:
    \[
    (f \cdot g)(\delta(a\otimes b)) = (f \cdot g)(\delta a\otimes b + a\otimes \delta b) = \delta f(a)g(b) + f(a)\delta g(b) = \delta((f \cdot g)(a\otimes b)).
    \]
    We conclude that $K\DAlg$ has all finite coproducts.
    
    Now, consider an arbitrary family $(A_i: i \in I)$ of differential $K$-algebras. For any finite subset $S \subseteq I$, let $A_S$ be the finite coproduct $\bigotimes_{i \in S} A_S$ with the derivation defined above. 
    
    Let $A$ be the directed limit of the family
    \[
    \left( \bigotimes_{i \in S} A_i : S \subseteq I \text{ finite} \right)
    \]
    where the ordering is given by inclusion of finite subsets of $I$. By \cite[Theorem IX.1.1]{MacLane1978}, $A$ is the coproduct of the family $(A_i: i \in I)$.
\end{proof}

\begin{lem}
    The forgetful functor $U: K\DAlg \to K\Alg$ is cocontinuous.
\end{lem}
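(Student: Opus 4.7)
The plan is to verify that $U$ preserves coequalisers and arbitrary small coproducts; a straightforward dualisation of the argument in \cite[Corollary V.2.2]{MacLane1978} (used in Proposition \ref{cat_of_diff_K_algs_is_cocomplete_prop}) shows that any functor preserving both is cocontinuous, since every small colimit arises as a coequaliser of two maps between coproducts.

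First I would check preservation of coequalisers. Given differential $K$-algebra morphisms $f, g: (A, \bdelta) \to (B, \bpartial)$, the coequaliser constructed in Proposition \ref{cat_of_diff_K_algs_is_cocomplete_prop} is the quotient $B \to B/I$, where $I$ is the ring-theoretic ideal of $B$ generated by $\{f(a) - g(a): a \in A\}$. But this is exactly the ideal used to form the coequaliser of $Uf, Ug$ in $K\Alg$, so the underlying $K$-algebra map $U(B/I)$ satisfies the universal property for $Uf, Ug$ in $K\Alg$ as well.

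For coproducts, the binary case is immediate from the construction: the underlying $K$-algebra of $(A \otimes_K B, \bd)$ is precisely the tensor product $A \otimes_K B$, which is the coproduct in $K\Alg$. For arbitrary small coproducts, Proposition \ref{cat_of_diff_K_algs_is_cocomplete_prop} realises them as directed colimits of finite coproducts in $K\DAlg$, and the analogous construction (via the same theorem of Mac Lane) applies verbatim in $K\Alg$. So it suffices to show that $U$ preserves directed colimits. This follows from a direct construction: given a directed system $(A_i, \bdelta^{(i)})_{i \in I}$ in $K\DAlg$, the $K$-algebra colimit $A = \varinjlim A_i$ admits a unique commuting tuple of derivations $\bdelta$ making each structure map $\iota_i: A_i \to A$ differential, obtained by setting $\bdelta(\iota_i(a)) = \iota_i(\bdelta^{(i)} a)$, which is well-defined because the transition maps are differential. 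A routine check shows that $(A, \bdelta)$ satisfies the universal property of the colimit in $K\DAlg$, whence $U(\varinjlim A_i) = \varinjlim U(A_i)$.

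The main obstacle, such as it is, is the bookkeeping for infinite coproducts via directed colimits; once $U$ is shown to preserve directed colimits and finite coproducts, preservation of arbitrary small coproducts follows, and combining with the coequaliser case yields cocontinuity.
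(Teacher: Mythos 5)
Your proposal is correct and takes essentially the same route as the paper, which simply observes that preservation of coequalisers and small coproducts suffices and that this is clear from the explicit constructions in Proposition \ref{cat_of_diff_K_algs_is_cocomplete_prop}; you have merely spelled out the details (same ideal for the coequaliser, same tensor product for binary coproducts, and preservation of directed colimits for the infinite case) that the paper leaves implicit.
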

\begin{proof}
    It suffices to see that $U$ preserves all small coproducts and coequalisers, which is clear from the constructions in the proof of Proposition \ref{cat_of_diff_K_algs_is_cocomplete_prop}.
\end{proof}

\section{Generalised Taylor Morphisms}
In this section, we introduce our notion of a (generalised) Taylor morphism, give classical examples of such objects, and collect basic observations and results.

\begin{defn} \label{taylor_morphism_defn}
    Let $K$ be a differential ring, and $L$ a differential $K$-algebra. 
    A \textbf{$K$-Taylor morphism for $L$} is a map $T$ which sends pairs of the form $(A, \phi)$, where $A$ is a differential ring, and $\phi: A \to K$ is a ring homomorphism, to a differential ring homomorphism $T_\phi: A \to L$, satisfying the following axioms:
    \begin{itemize}
        \item[(TM1)] For $A$ a differential ring, and $\phi: A \to K$ a differential ring homomorphism, $T_\phi = \eta_L \circ \phi$, where $\eta_L : K \to L$ is the structure map of $L$.
        \item[(TM2)] For differential rings $A, B$, and $\phi: A \to K$, $\psi: B \to K$, $\chi: A \to B$ ring homomorphisms with $\chi$ differential, if
        \[
        \begin{tikzcd}
            A \ar[r, "\phi"] \ar[d, "\chi"'] & K \\
            B \ar[ur, "\psi"']
        \end{tikzcd}
        \]
        commutes, then
        \[
        \begin{tikzcd}
            A \ar[r, "T_\phi"] \ar[d, "\chi"'] & L \\
            B \ar[ur, "T_\psi"']
        \end{tikzcd}
        \]
        also commutes.
    \end{itemize}
    We say that a differential $K$-algebra $L$ \textbf{admits a $K$-Taylor morphism} if there exists a $K$-Taylor morphism $T$ for $L$.
\end{defn}

We observe that a $K$-Taylor morphism for $L$ is a functor in the following sense:

\begin{lem} \label{k_tm_is_functor_lem}
    A $K$-Taylor morphism for $L$ is precisely a functor
    \[
    T: (U \downarrow 1_K) \to (\id_\DRing \downarrow 1_L)
    \]
    such that
    \[
    \begin{tikzcd}
        (U\downarrow 1_K) \ar[r, "T"] \ar[dr, "D"'] & (\id_\DRing \downarrow 1_L) \ar[d, "D"] \\
        & \DRing
    \end{tikzcd}
    \begin{tikzcd}
        (\id_\DRing\downarrow 1_K) \ar[r, "U"] \ar[dr, "\eta_L\circ -"'] & (U\downarrow 1_K) \ar[d, "T"]\\
        & (\id_\DRing \downarrow 1_L)
    \end{tikzcd}
    \]
    commute.
    
    The correspondence between $K$-Taylor morphisms for $L$ and such functors is given as follows: let $T$ be a $K$-Taylor morphism for $L$. Then, the functor $T: (U \downarrow 1_K) \to (\id_\DRing \downarrow 1_L)$ acts on objects by $(A, \bullet, \phi) \mapsto (A, \bullet, T_\phi)$, and on morphisms by the identity. Conversely, if $S$ is such a functor, for any differential ring $A$ and ring homomorphism $\phi: A \to K$, we set $S_\phi = \psi$, where $S(A, \bullet, \phi) = (A, \bullet, \psi)$.
\end{lem}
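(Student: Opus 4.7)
The plan is to translate directly between the two descriptions, using the fact that $\mathbf{1}$ has only the identity morphism to strip morphisms in each comma category down to their essential data. Specifically, I would first observe that a morphism $(\alpha, \beta): (A, \bullet, \phi) \to (B, \bullet, \psi)$ in $(U \downarrow 1_K)$ is forced to have $\beta = \id_\bullet$, and so reduces to a differential ring homomorphism $\chi = \alpha: A \to B$ such that the commuting square collapses to the triangle $\phi = \psi \circ \chi$; similarly for $(\id_\DRing \downarrow 1_L)$, where the condition becomes $T_\phi = T_\psi \circ \chi$. With this dictionary, (TM1) and (TM2) translate directly into commutativity of the two prescribed triangles.

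For the forward direction, given a $K$-Taylor morphism $T$ I would define the candidate functor on objects by $(A, \bullet, \phi) \mapsto (A, \bullet, T_\phi)$ and on morphisms by $(\chi, \id_\bullet) \mapsto (\chi, \id_\bullet)$. Well-definedness on morphisms is precisely the content of (TM2), after which functoriality is automatic because both identities and compositions are preserved by acting trivially on the underlying pair. The first triangle $D \circ T = D$ then holds tautologically, since both instances of the domain functor extract the first coordinate of the triple. The second triangle, evaluated on a differential $\phi$, says exactly that $T_\phi = \eta_L \circ \phi$, which is (TM1).

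For the converse, I would start with a functor $S: (U \downarrow 1_K) \to (\id_\DRing \downarrow 1_L)$ making the two triangles commute. The condition $D \circ S = D$ forces $S(A, \bullet, \phi)$ to have first coordinate $A$, and forces $S(\chi, \id_\bullet)$ to have first coordinate $\chi$; triviality of $\mathbf{1}$ fixes the middle coordinates, so we may unambiguously write $S(A, \bullet, \phi) = (A, \bullet, S_\phi)$ for a differential ring homomorphism $S_\phi: A \to L$, and $S(\chi, \id_\bullet) = (\chi, \id_\bullet)$. Axiom (TM1) is then exactly the second commuting triangle, while axiom (TM2) is exactly the condition that $S(\chi, \id_\bullet)$ is a legitimate morphism in $(\id_\DRing \downarrow 1_L)$, i.e.\ $S_\phi = S_\psi \circ \chi$ whenever $\phi = \psi \circ \chi$. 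It is then immediate from the constructions that the assignments $T \mapsto (\text{functor})$ and $S \mapsto (S_\phi)_\phi$ are mutually inverse.

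The argument has essentially no mathematical content beyond the definitions; the only obstacle is notational bookkeeping, namely carefully tracking the second coordinates (which are all $\bullet$ and $\id_\bullet$) and checking that the commuting square in the definition of a comma-category morphism really does reduce to the triangle $\phi = \psi \circ \chi$ under the assumption that the codomain functor is $1_K$. Once this reduction is in place, both axioms (TM1) and (TM2) match the two prescribed commutativity conditions term by term.
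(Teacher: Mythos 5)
Your proposal is correct and is precisely the bookkeeping argument the paper itself invokes (its proof reads, in full, ``This is simply an exercise in bookkeeping''). The reduction of the comma-category square to the triangle $\phi = \psi \circ \chi$, the identification of (TM2) with well-definedness of the functor on morphisms, and of (TM1) with the second commuting triangle all match the intended argument.
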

\begin{proof}
    This is simply an exercise in bookkeeping.
\end{proof}

\begin{rmk}
    The first commutative triangle simply requires that $T$ preserves the domain of maps, and the second says that `$T$ preserves differential ring homomorphisms', i.e. axiom (TM1). The axiom (TM2) is automatic from $T$ being a functor.
\end{rmk}

\begin{eg}[The Classical Taylor Morphism] \label{classical_tm_eg}
    Let $K$ be a $\Q$-algebra, which we regard as equipped with the $m$-tuple of trivial derivations $\bzero$. 
    We define the \textbf{classical $K$-Taylor morphism} $T$ for $(K[[\bt]], \ddbt)$ as follows: let $(A, \bdelta)$ be a differential ring, and let $\phi: A \to K$ be a ring homomorphism. Define
    \begin{align}
    T_\phi : (A, \bdelta) &\to \left(K[[\bt]], \ddbt\right) \\
    a &\mapsto \sum_{\alpha \in \N^m} \frac{\phi(\bdelta^\alpha a)}{\alpha!} \bt^\alpha.
    \end{align}
    We check that $T$ is a Taylor morphism in the generalised sense. For (TM1), let $(A, \bdelta)$ be a differential ring, and suppose that $\phi: (A, \bdelta) \to (K, \bzero)$ is a differential ring homomorphism. That is, for any $a \in A$ and $i = 1,...,m$, $\phi(\delta_i(a)) = 0$. Thus, $T_\phi(a) = \phi(a)$.

    Now suppose that $(A, \bdelta)$ and $(B, \bpartial)$ are differential rings with $\phi: A \to K$, $\psi: B \to K$ ring homomorphisms, and $\chi: A \to B$ a differential ring homomorphism such that $\phi = \psi \circ \chi$. Let $a \in A$. Then,
    \[
    T_\psi(\chi(a)) = \sum_\alpha \frac{\psi(\bpartial^\alpha(\chi(a)))}{\alpha!} \bt^\alpha = \sum_\alpha \frac{\psi(\chi(\bdelta^\alpha(a)))}{\alpha!} \bt^\alpha = T_{\psi \circ \chi}(a) = T_\phi(a).
    \]
\end{eg}

\begin{eg}[The Hurwitz Morphism \cite{Keigher1997}]\label{hurwitz_morphism_eg}
    Let $(K, \bzero)$ be a constant ring. The \textbf{ring of Hurwitz series (in $m$ variables) over $K$}, denoted $H(K)$, consists of the additive group of formal power series (in $m$ variables) $K[[\bt]]$, equipped with the multiplication defined by
    \[
    \left( \sum_\alpha a_\alpha \bt^\alpha \right)\left( \sum_\alpha b_\alpha \bt^\alpha \right) = \sum_\alpha \left(\sum_{\beta+\gamma=\alpha} \binom{\alpha}{\beta} a_\beta b_\gamma \right) \bt^\alpha.
    \]
    It is naturally equipped with the $m$-tuple of derivations $\bd_K = (\dd_{K, 1},...,\dd_{K, m})$ defined by
    \[
    \dd_{K, i} \left(\sum_\alpha a_\alpha \bt^\alpha\right) = \sum_\alpha a_{\alpha + \hat{i}} \bt^\alpha,
    \]
    where $\hat{i}$ denotes the multi-index with $1$ in the $i$th position and $0$ elsewhere. That is, $\dd_{K,i}$ acts by shifting coefficients of a series downwards once in the $i$th position. The ring $H(K)$ is equipped with a natural differential $(K, \bzero)$-algebra structure given by the standard inclusion of $K$ into its (additive group) of formal power series.

    Let $(A, \bdelta)$ be a differential ring, and $\phi: A \to K$ be a ring homomorphism. The \textbf{Hurwitz morphism} (also known as the \emph{Hurwitz mapping} or \emph{expansion}) of $\phi$ is
    \begin{align}
        H_\phi : (A, \bdelta) &\to (H(K), \bd_K) \\
        a &\mapsto \sum_\alpha \phi( \bdelta^\alpha a) \bt^\alpha.
    \end{align}
    It is easy to verify that this is indeed a $(K, \bzero)$-Taylor morphism in the generalised sense.
\end{eg}
\begin{rmk}
    If $K$ has $p>0$, then $H(K)$ is not a domain. For instance, $(t_i)^p = 0$ for any $i$. In particular, we also note that $(t_i)^p$ is distinct from the series $t_i^p$. Where $K$ is a $\Q$-algebra, $(H(K), \bd_K)$ is isomorphic to $(K[[\bt]], \ddbt)$.
\end{rmk}

\begin{eg}[The Twisted Taylor Morphism \cite{SanchezTressl2020}] \label{ttm_eg}
    For a differential $\Q$-algebra $(K, \bdelta)$, we can equip the ring of formal power series $K[[\bt]]$ with the derivations $\bdelta + \ddbt$, where $\bdelta$ acts on the coefficients of series by $\bdelta$, i.e. for any multi-index $m$,
    \[
    \bdelta^\beta \left( \sum_\alpha a_\alpha \bt^\alpha\right) = \sum_\alpha \bdelta^\beta(a_\alpha) \bt^\alpha.
    \]
    This endows $K[[\bt]]$ with derivations such that the natural inclusion of $K$ into $K[[\bt]]$ is differential.

    For a differential ring $(A, \bpartial)$ and a ring homomorphism $\phi: A \to K$, the twisted Taylor morphism of $\phi$ is given explicitly by
    \begin{align}
        T^*_\phi : (A, \bpartial) &\to \left(K[[\bt]], \bdelta + \ddbt\right) \\
        a &\mapsto \sum_{\alpha} \frac{1}{\alpha!} \sum_{\beta \leq \alpha} (-1)^{\alpha-\beta} \binom{\alpha}{\beta} \bdelta^{\alpha-\beta}(\phi(\bpartial^\beta(a))) \bt^\alpha.
    \end{align}
    The reader may refer to \cite[Section 3]{SanchezTressl2020} for details on its construction. By a straightforward direct computation, one can see that $T^*$ satisfies the axioms (TM1) and (TM2).
\end{eg}
\begin{rmk}
    If $\bdelta = \bzero$, the twisted $(K, \bdelta)$-Taylor morphism is precisely the classical Taylor morphism.
\end{rmk}

We observe that if $T$ is a $K$-Taylor morphism for $L$, then $T$ restricts naturally to the subcategory of differential $K$-algebras:

\begin{prop} \label{k_tm_preserves_k_algs_prop}
    Let $K$ be a differential ring, and $T$ a $K$-Taylor morphism for $L$. Let $A$ be a differential $K$-algebra with structure map $\eta_A: K \to A$, and let $\phi: A \to K$ be a $K$-algebra homomorphism. Then, $T_\phi: A \to L$ is a differential $K$-algebra homomorphism.
\end{prop}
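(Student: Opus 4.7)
The plan is to verify the $K$-algebra compatibility $T_\phi \circ \eta_A = \eta_L$ by applying the two Taylor morphism axioms to a carefully chosen triangle, since $T_\phi$ is already known to be a differential ring homomorphism.

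First, I would observe that because $\phi: A \to K$ is a $K$-algebra homomorphism, we have $\phi \circ \eta_A = \id_K$. This gives a commutative triangle
\[
\begin{tikzcd}
    K \ar[r, "\id_K"] \ar[d, "\eta_A"'] & K \\
    A \ar[ur, "\phi"']
\end{tikzcd}
\]
in which the vertical arrow $\eta_A$ is a differential ring homomorphism (it is the structure map of $A$ as a differential $K$-algebra), while $\id_K$ and $\phi$ are ring homomorphisms into $K$.

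Next I would invoke axiom (TM2) with $\chi = \eta_A$, $\psi = \phi$, and $\phi$ (in the axiom's notation) taken as $\id_K$. This yields $T_\phi \circ \eta_A = T_{\id_K}$. Since $\id_K: K \to K$ is plainly a differential ring homomorphism, axiom (TM1) identifies $T_{\id_K} = \eta_L \circ \id_K = \eta_L$.

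Combining the two equalities gives $T_\phi \circ \eta_A = \eta_L$, so $T_\phi$ respects the structure maps, i.e.\ it is a $K$-algebra homomorphism. As $T_\phi$ is a differential ring homomorphism by definition of a $K$-Taylor morphism, it is therefore a differential $K$-algebra homomorphism. There is no real obstacle here; the proof is a one-line diagram chase once one sees that $\phi \circ \eta_A = \id_K$ is the correct factorisation to feed into (TM2).
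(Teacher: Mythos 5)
Your proposal is correct and follows essentially the same route as the paper: both factor $\id_K = \phi \circ \eta_A$, apply (TM2) with the differential map $\eta_A$ to get $T_\phi \circ \eta_A = T_{\id_K}$, and then use (TM1) to identify $T_{\id_K} = \eta_L$. Nothing is missing.
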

\begin{proof}
    It suffices to show that $T_\phi \circ \eta_A = \eta_L$. Since $\phi$ is a $K$-algebra homomorphism, we have that
    \[
    \begin{tikzcd}
        K \ar[r, "\id"] \ar[d, "\eta_A"'] & K \\
        A \ar[ur, "\phi"']
    \end{tikzcd}
    \]
    commutes. Since $\id: K \to K$ is differential, by (TM1), $T_\id = \eta_L$. As $\eta_A$ is differential, by (TM2) we have that
    \[
    \begin{tikzcd}
        K \ar[r, "\eta_L"] \ar[d, "\eta_A"'] & L \\
        A \ar[ur, "T_\phi"']
    \end{tikzcd}
    \]
    commutes, as required.
\end{proof}

We can replace the axiom (TM1) with a number of equivalent axioms, modulo (TM2). We list some of them below:

\begin{prop}
    Let $T$ be a map of the form in Definition \ref{taylor_morphism_defn} which satisfies (TM2). The following are equivalent:
    \begin{enumerate}[label=(\roman*)]
        \item For any differential ring $A$, and differential ring homomorphism $\phi: A \to K$, $T_\phi = \eta_L \circ \phi$.
        \item For any differential ring $A$ and ring homomorphism $\phi: A \to K$, if $B$ is a differential subring of $A$ such that $\phi \rest_B$ is differential, then $T_\phi \rest_B = \eta_L \circ (\phi\rest_B)$
        \item For any differential $K$-algebra $A$ and $K$-algebra homomorphism $\phi: A \to K$, $T_\phi: A \to L$ is a differential $K$-algebra homomorphism.
        \item If $\id: K \to K$ denotes the identity, then $T_\id = \eta_L$.
    \end{enumerate}
\end{prop}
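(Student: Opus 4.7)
The plan is to prove the cyclic chain of implications $(i) \Rightarrow (ii) \Rightarrow (iii) \Rightarrow (iv) \Rightarrow (i)$; each step will amount to a single application of (TM2) to a carefully chosen commutative triangle, combined with the preceding condition.

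For $(i) \Rightarrow (ii)$, given $\phi: A \to K$ and a differential subring $B \subseteq A$ with $\phi\rest_B$ differential, I will feed the inclusion $\iota: B \hookrightarrow A$ (which is differential) and the factorisation $\phi\rest_B = \phi \circ \iota$ into (TM2), obtaining $T_{\phi\rest_B} = T_\phi \circ \iota = T_\phi\rest_B$, and then invoke (i) applied to $\phi\rest_B$ to identify this with $\eta_L \circ \phi\rest_B$.

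For $(ii) \Rightarrow (iii)$, given a differential $K$-algebra $A$ with structure map $\eta_A$ and a $K$-algebra homomorphism $\phi: A \to K$, I will take $B = \eta_A(K)$, which is automatically a differential subring since $\eta_A$ is differential. A short calculation shows $\phi\rest_B$ is differential: for $b = \eta_A(k)$ one has $\phi(\delta_i b) = \phi(\eta_A(\delta_i k)) = \delta_i k = \delta_i(\phi(b))$, using $\phi \circ \eta_A = \id_K$. Then (ii) gives $T_\phi(\eta_A(k)) = \eta_L(k)$ for all $k \in K$, i.e.\ $T_\phi \circ \eta_A = \eta_L$, upgrading the (already differential) ring homomorphism $T_\phi$ to a differential $K$-algebra homomorphism. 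For $(iii) \Rightarrow (iv)$, I view $K$ as a differential $K$-algebra with structure map $\id_K$, so that (iii) applied to $\phi = \id_K$ forces $T_{\id} = \eta_L$. Finally, for $(iv) \Rightarrow (i)$, given a differential ring homomorphism $\phi: A \to K$, I will apply (TM2) to the trivial factorisation $\phi = \id_K \circ \phi$ with $\chi = \phi$, yielding $T_\phi = T_{\id} \circ \phi = \eta_L \circ \phi$ by (iv).

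The only mildly subtle point is the implication $(ii) \Rightarrow (iii)$, where one must observe that $\eta_A(K)$ is a differential subring of $A$ on which $\phi$ restricts to a differential ring homomorphism even when $\eta_A$ fails to be injective; every other step is pure diagram-chasing with (TM2).
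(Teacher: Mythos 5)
Your proof is correct, and all four implications go through; the difference from the paper lies in how the equivalences are wired together. The paper proves (i) $\Leftrightarrow$ (ii) as a separate two-way step (the direction (ii) $\Rightarrow$ (i) being trivial by taking $B = A$), and then closes the loop (i) $\Rightarrow$ (iii) $\Rightarrow$ (iv) $\Rightarrow$ (i), where (i) $\Rightarrow$ (iii) is just a citation of Proposition \ref{k_tm_preserves_k_algs_prop} (whose proof applies (TM2) to the triangle $\id_K = \phi \circ \eta_A$ together with $T_{\id} = \eta_L$ from (TM1)). You instead prove a single cyclic chain, and the genuinely new step is (ii) $\Rightarrow$ (iii): you take $B = \eta_A(K)$ as the differential subring, check that $\phi\rest_B$ is differential using $\phi \circ \eta_A = \id_K$, and read off $T_\phi \circ \eta_A = \eta_L$ from (ii). That argument is sound, including the point about non-injective $\eta_A$: both $\phi(\delta_i b)$ and $\delta_i(\phi(b))$ are computed to equal $\delta_i k$ for any preimage $k$ of $b$, so no well-definedness issue arises. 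What your route buys is a self-contained proof that does not lean on the earlier proposition and gets (ii) $\Rightarrow$ (i) for free from the cycle; what the paper's route buys is brevity, since the work of (i) $\Rightarrow$ (iii) has already been done. Either organisation is fine.
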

Note that condition (i) is precisely the statement of (TM1).
\begin{proof} For (i) $\Rightarrow$ (ii), let $A$ be a differential ring, $\phi: A \to K$ a ring homomorphism, and $B$ a differential subring of $A$ such that $\phi \rest_B$ is differential. Let $\iota: B \to A$ denote the inclusion, which is differential. Since $\phi \rest_B = \phi \circ \iota$, applying (TM2), we have that $T_{\phi\rest_B} = T_\phi \circ \iota = T_\phi \rest_B$. Further, as $\phi\rest_B$ is differential, applying (i) yields $T_\phi \rest_B = T_{\phi \rest_B} = \eta_L \circ (\phi \rest_B)$. The reverse implication (ii) $\Rightarrow$ (i) is trivial by taking $A = B$.

The implication (i) $\Rightarrow$ (iii) is the statement of Proposition \ref{k_tm_preserves_k_algs_prop}, and (iii) $\Rightarrow$ (iv) is trivial, as $\id: K \to K$ is a $K$-algebra homomorphism, and the only differential $K$-algebra homomorphism $K \to L$ is $\eta_L$. For (iv) $\Rightarrow$ (i), let $A$ be any differential ring with $\phi: A \to K$ a differential ring homomorphism. Trivially, $\phi = \id \circ \phi$, and applying (TM2), we have that $T_\phi = T_\id \circ \phi = \eta_L \circ \phi$ by (iv).
\end{proof}

\subsection{Connections to Differentially Large Fields}

Recall from \cite{SanchezTressl2020} that a differential field (of characteristic 0) $(K, \bdelta)$ is said to be \textbf{differentially large} if it is large as a pure field, and, for any differential field extension $(K, \bdelta) \subseteq (L, \bpartial)$, if $K$ is existentially closed in $L$ as a pure field, then $(K, \bdelta)$ is existentially closed in $(L, \bpartial)$ as a differential field. We direct the reader to the paper \cite{SanchezTressl2020} for details.

We can characterise differentially large fields in terms of Taylor morphisms in the following way:

\begin{prop}
    Let $(K, \bdelta)$ be a differential field of characteristic 0, large as a field. Then, $(K, \bdelta)$ is differentially large if and only if there exists a differential $(K, \bdelta)$-algebra $(L, \bpartial)$ such that $(L, \bpartial)$ admits a $K$-Taylor morphism $T$, and $(K, \bdelta)$ is existentially closed in $(L, \bpartial)$ as differential rings.
\end{prop}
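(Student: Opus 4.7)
My plan is to handle the two directions separately. For the forward direction, take $(L, \bpartial) = (K[[\bt]], \bdelta + \ddbt)$ together with the twisted Taylor morphism $T^*$ of Example \ref{ttm_eg}, which is already verified there to be a $K$-Taylor morphism. One of the equivalent characterisations of differential largeness proved by Le\'on S\'anchez and Tressl in \cite{SanchezTressl2020} is precisely that $(K, \bdelta)$ is existentially closed in $(K[[\bt]], \bdelta + \ddbt)$ as differential rings, so the existential closure hypothesis in the statement follows immediately from differential largeness.

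For the reverse direction, given $T$ and $L$ with $K$ existentially closed in $L$ as differential rings, since $K$ is assumed to be large I only need to verify the second clause in the definition of differential largeness. Fix a differential field extension $M$ of $(K, \bdelta)$ in which $K$ is existentially closed as a pure field, and let $\theta$ be an existential formula in the language of differential rings over $K$ realised in $M$. After routine simplification I would write $\theta = \exists \bar x\,[\bigwedge_i p_i(\bar x) = 0 \land q(\bar x) \ne 0]$ with witness $\bar a \in M$, and form the differentially finitely generated $K$-subalgebra $A = K\{\bar a\}[1/q(\bar a)]$ of $M$; note that $A$ inherits its derivations from $M$, with $1/q(\bar a)$ handled by the quotient rule.

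The key move is that $A$ is also a finitely generated $K$-algebra (as a ring) embedded in $M$, so existential closure of $K$ in $M$ as a pure field supplies a $K$-algebra homomorphism $\phi : A \to K$. Applying $T$ yields a differential $K$-algebra homomorphism $T_\phi : A \to L$ by Proposition \ref{k_tm_preserves_k_algs_prop}. The equations $p_i(\bar a) = 0$ transfer to $p_i(T_\phi(\bar a)) = 0$ in $L$ because $T_\phi$ respects the derivations, and $q(T_\phi(\bar a))$ is nonzero in $L$ because $q(\bar a)$ is a unit in $A$ by construction, so any ring homomorphism must send it to a unit. Hence $L$ realises $\theta$, and existential closure of $K$ in $L$ as differential rings pulls the witness down to $K$, as required.

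The main subtlety I expect is the reduction of potentially many inequations to a single invertible element $q(\bar a)$ and the verification that the localised subalgebra $A$ is genuinely a differentially finitely generated $K$-subalgebra of $M$, but both are essentially bookkeeping; the conceptual content is entirely in the pairing of the pure-field existential closure hypothesis (producing $\phi$) with the Taylor morphism (upgrading $\phi$ to a differential homomorphism into $L$).
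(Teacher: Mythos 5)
Your forward direction is fine and is exactly the paper's: take $(K[[\bt]], \bdelta+\ddbt)$ with the twisted Taylor morphism and quote the Le\'on S\'anchez--Tressl characterisation of differential largeness via existential closure in the power series ring.

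The reverse direction, however, has a genuine gap at the step you call the ``key move''. The algebra $A = K\{\bar a\}[1/q(\bar a)]$ is differentially finitely generated, but it is \emph{not} in general finitely generated as a ring: as a ring it is generated by all the derivatives $\bdelta^\alpha a_j$, an infinite family unless $\bar a$ is very special. Existential closure of $K$ in $M$ as a pure field only produces $K$-algebra homomorphisms out of \emph{finitely generated} subrings of $M$ (each such point is witnessed by a single existential formula over $K$); it does not by itself yield a $K$-algebra homomorphism defined on all of $A$, and in general an infinitely generated algebra all of whose finitely generated subalgebras have $K$-points need not have one itself. Since the Taylor morphism $T$ can only be applied to a pair $(A,\phi)$ where $\phi$ is defined on the whole differential ring $A$, you cannot feed it a point of a finite truncation (which is not a differential subring). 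This is precisely the difficulty the paper avoids: it does not argue from the definition of differential largeness but instead invokes the characterisation in \cite[Theorem 4.3(iv)]{SanchezTressl2020}, namely that $K$ is differentially large iff it is large and every differentially finitely generated $K$-algebra \emph{which already has a $K$-point} has a differential $K$-point. There the full $K$-point $\phi: A \to K$ is a hypothesis, so one can apply $T_\phi$ directly (together with the Ritt--Raudenbush theorem to reduce to a finite system $\Sigma$ and existential closure of $K$ in $L$ to pull the differential solution down to $K$). To repair your argument you would either need to cite that characterisation, or supply the nontrivial argument that pure-field existential closure of $K$ in $M$ forces the dfg subalgebra $A$ to admit a genuine $K$-point; the latter is essentially the content of the equivalence you are implicitly reproving.
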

\begin{proof}
     The forward direction is by \cite[Theorem 4.3(ii)]{SanchezTressl2020}: as $(K, \bdelta)$ is differentially large, it is existentially closed in $K((\bt))$, which admits the twisted Taylor morphism as defined in Example \ref{ttm_eg}.

     For the converse, we will use the characterisation of differentially large fields in Theorem 4.3(iv) of \cite{SanchezTressl2020}. That is, a differential field $(K, \bdelta)$ is differentially large if and only if it is large as a field, and every differentially finitely generated $K$-algebra with a $K$-point has a differential $K$-point. Let $A$ be a differentially finitely generated $K$-algebra, i.e. $A \iso K\{\bar{x}\}/I$, where $\bar{x}$ is a finite tuple of indeterminates and $I$ is some differential ideal. Let $\pi: K\{\bar{x}\} \to A$ be the quotient map. Let $\phi: A \to K$ be a $K$-point, and $\tilde\phi = \phi \circ \pi$. Since $\ker(\tilde\phi)$ is prime, it contains the radical $\sqrt{I}$ of $I$, which is also a differential ideal as $K$ is a field of characteristic 0. By the Ritt-Raudenbush basis theorem, $\sqrt{I}$ is generated as a differential radical ideal by a finite set $\Sigma$ of differential polynomials. 

     Suppose that $(L, \bpartial)$ is a differential $K$-algebra which admits a $K$-Taylor morphism $T$, and that $(K, \bdelta)$ is existentially closed in $(L, \bpartial)$. In particular, this implies that $(L, \bpartial)$ is a domain. Then, $T_{\tilde{\phi}}: K\{\bar{x}\} \to L$ is a differential $K$-algebra homomorphism, with $\ker(T_{\tilde\phi})$ containing $\sqrt{I}$. Then, $T_{\tilde{\phi}}(\bar{x}) \in L$ is a differential solution to the system $\Sigma$. By existential closure, there is a solution $\bar{a} \in K$ to $\Sigma$ as well, and hence by evaluating $\bar{x}$ at $\bar{a}$, we obtain a differential point $\psi: A \to K$.
\end{proof}

\section{Restrictions of Taylor Morphisms} \label{restrictions_section}

In this section, we will show that Taylor morphisms are completely determined by their restrictions to finitely differentially generated $K$-algebras, and in doing so answer a question of Le\'on S\'anchez in the negative.

We begin by showing that a differential field $K$ cannot admit a $K$-Taylor morphism into itself.

\begin{prop} \label{no_self_ttm}
Let $(K, \bdelta)$ be a differential field. Then $K$ does not admit a $K$-Taylor morphism.
\end{prop}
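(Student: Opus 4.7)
The plan is to assume, for contradiction, that a $K$-Taylor morphism $T$ for $K$ exists, so that the structure map $\eta_K: K \to K$ is the identity, and derive a contradiction by examining the behaviour of $T$ on the differential polynomial ring $A = K\{x\}$. By Proposition \ref{k_tm_preserves_k_algs_prop}, for every $K$-algebra ring homomorphism $\phi: K\{x\} \to K$, the map $T_\phi$ is a differential $K$-algebra homomorphism $K\{x\} \to K$, hence determined by $T_\phi(x) \in K$. The argument splits into two cases depending on whether $\bdelta$ is trivial.

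In Case 1, where $\bdelta = \bzero$ (so $K = C_K$ is constant), take the non-differential $K$-algebra homomorphism $\phi: K\{x\} \to K$ with $\phi(x) = 0$, $\phi(\delta_1 x) = 1$, and $\phi(\bdelta^\alpha x) = 0$ otherwise, and let $\chi: K\{x\} \to K\{x\}$ be the differential endomorphism with $\chi(x) = \delta_1 x$. A direct check (trivial since all derivations on $K$ are zero) shows that $\phi \circ \chi$ is differential. Applying (TM1) to $\phi \circ \chi$ gives $T_{\phi \circ \chi} = \phi \circ \chi$, and (TM2) gives $T_\phi \circ \chi = T_{\phi \circ \chi}$; evaluating at $x$ yields $T_\phi(\delta_1 x) = (\phi \circ \chi)(x) = 1$. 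On the other hand, $T_\phi$ is differential into the constant ring $K$, so $T_\phi(\delta_1 x) = \delta_1(T_\phi(x)) = 0$, a contradiction.

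In Case 2, where $C_K \subsetneq K$, the plan is to extract from $T$ a $C_K$-algebra homomorphism $K \to C_K$ and derive a contradiction from a $C_K$-dimension count. For each $b \in K$, let $\phi_b: K\{x\} \to K$ be the $K$-algebra homomorphism with $\phi_b(x) = b$ and $\phi_b(\bdelta^\alpha x) = 0$ for $|\alpha| \geq 1$, and set $d_b \coloneqq T_{\phi_b}(x) \in K$. Applying (TM2) with the shift endomorphisms $\chi_\alpha: K\{x\} \to K\{x\}$, $x \mapsto \bdelta^\alpha x$ (whose composition with $\phi_b$ for $|\alpha| \geq 1$ is the differential map sending every $\bdelta^\beta x$ to $0$) forces $\bdelta^\alpha(d_b) = 0$ for all $|\alpha| \geq 1$, so $d_b \in C_K$. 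Applying (TM2) to the differential homomorphisms $K\{y\} \to K\{x_1, x_2\}$ given by $y \mapsto x_1 + x_2$ and $y \mapsto x_1 x_2$, together with suitable $K$-algebra homomorphisms $K\{x_1, x_2\} \to K$, then shows that $d: b \mapsto d_b$ is both additive and multiplicative, hence a ring homomorphism $K \to C_K$. When $b \in C_K$, the map $\phi_b$ is itself differential, so (TM1) gives $d_b = b$, making $d$ a $C_K$-algebra homomorphism extending $\id_{C_K}$. As $K$ is a field and $d$ is unital, $d$ is injective, yielding a $C_K$-linear injection $K \hookrightarrow C_K$; this contradicts $\dim_{C_K}(K) \geq 2$ (which holds since $C_K \subsetneq K$) while $\dim_{C_K}(C_K) = 1$.

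The hardest part will be Case 2: establishing that $d$ is a ring homomorphism requires several careful applications of (TM2) to differential homomorphisms between differential polynomial rings in multiple variables, and verifying in each instance that the composition $\phi \circ \chi$ agrees with the expected $\phi_{b+c}$ or $\phi_{bc}$ by a Leibniz-rule computation.
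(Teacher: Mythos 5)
Your proof is correct, but it takes a genuinely different route from the paper. The paper's argument is a four-line affair: if $T$ were a $K$-Taylor morphism for $K$, then applying $T$ to the evaluation map $\ev_0\colon (K[[\bt]],\bdelta+\ddbt)\to K$ would produce a surjective differential homomorphism onto the field $K$, whose kernel must be the unique maximal ideal $(\bt)$ of the local ring $K[[\bt]]$ --- but $(\bt)$ is not a differential ideal, since $(\delta_1+\tfrac{\dd}{\dd t_1})(t_1)=1$. Your argument instead stays entirely inside differential polynomial rings: in the constant case you exhibit a non-differential $K$-point of $K\{x\}$ whose image under $T$ would have to be simultaneously $0$ and $1$, and in the non-constant case you assemble the values $d_b=T_{\phi_b}(x)$ into a unital ring homomorphism $K\to C_K$ restricting to the identity on $C_K$, which is impossible (indeed, injectivity alone already gives $b=d_b\in C_K$ for every $b$, so the dimension count can be skipped). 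I checked the delicate points: $\phi\circ\chi$ in Case 1 and $\phi_b$ for $b\in C_K$ are differential because every term of $\delta_i(M)$ for a nontrivial monomial $M$ retains a proper derivative of $x$, and the Leibniz expansion of $\bdelta^\alpha(x_1x_2)$ does vanish under $\phi_{b,c}$ for $|\alpha|\geq 1$, so multiplicativity of $d$ goes through. What your approach buys is that it uses only differentially finitely generated $K$-algebras and $K$-algebra homomorphisms, so it proves the stronger Corollary \ref{no_self_ftm_cor} (no finite $K$-Taylor morphism) directly, without routing through the extension machinery of Propositions \ref{ftm_implies_rtm} and \ref{rtm_implies_tm}; the paper's proof is much shorter but needs the non-finitely-generated algebra $K[[\bt]]$ and hence that machinery to reach the corollary. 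The price is length and a case split that the power-series argument avoids.
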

\begin{proof}
Suppose that $(K, \bdelta)$ admits a $K$-Taylor morphism $T$. 
Consider the evaluation map (at 0) $\ev: K[[\bt]] \to K$, where $K[[\bt]]$ is equipped with the derivations $\bdelta + \ddbt$. 
Observe that $T_\ev: (K[[\bt]], \bdelta + \ddbt) \to (K, \bdelta)$ is a differential $K$-algebra homomorphism. 
Since $T_\ev$ is surjective onto a field, $\ker(T_\ev)$ is a maximal ideal of $K[[\bt]]$. As $K[[\bt]]$ is a local ring with maximal ideal $(\bt)$, we find that $\ker T_\ev = (\bt)$, which is a contradiction, as $(\bt)$ is not a differential ideal.
\end{proof}

We will now answer a question of Le\'on S\'anchez negatively: it was asked whether differential largeness could be characterised by a form of Taylor morphism from $K$ to itself, perhaps after restricting to differentially finitely generated $K$-algebras. 

We know by \cite[Theorem 4.3(iv)]{SanchezTressl2020} that if $K$ is a differentially large field, then every differentially finitely generated $K$-algebra $A$ with a $K$-point $\phi: A \to K$ has a differential $K$-point as well. Essentially, we ask whether such points can be found in a `uniform' or `functorial' way similarly to in the case of Taylor morphisms. Formally:

\begin{defn} 
    Let $K$ be a differential ring, and $L$ a differential $K$-algebra. A \textbf{finite $K$-Taylor morphism for $L$} is a map $\hat{T}$ which sends pairs $(A, \phi)$, where $A$ is a differentially finitely generated $K$-algebra, and $\phi: A \to K$ is a $K$-algebra homomorphism, to a differential $K$-algebra homomorphism $\hat{T}_\phi: A \to K$, satisfying (TM1) and (TM2) (with the appropriate restriction to differentially finitely generated $K$-algebras).
\end{defn}

\begin{ques_num} \label{ftm_conj} 
Let $K$ be a differential field. Is it true that $K$ is differentially large if and only if $K$ admits a finite $K$-Taylor morphism for $K$?
\end{ques_num}


\begin{eg}
    Let $T$ be any $K$-Taylor morphism for a differential $K$-algebra $L$, and let $\hat{T}$ be its restriction to differentially finitely generated $K$-algebras. Then, $\hat{T}$ is a finite Taylor morphism.
\end{eg}

We begin by showing that if we have a finite $K$-Taylor morphism $\hat{T}$ for $L$, then we can extend its domain uniquely to all differential $K$-algebras.

\begin{defn} \index{Taylor morphism(s)!restricted}
    A \textbf{restricted $K$-Taylor morphism for $L$} is a map $\tilde{T}$ which sends pairs $(A, \phi)$, where $A$ is a differential $K$-algebra and $\phi: A \to K$ is a $K$-algebra homomorphism, to a differential $K$-algebra homomorphism $\tilde{T}_\phi: A \to K$, which satisfies the axioms (TM1) and (TM2) restricted to differential $K$-algebras and $K$-algebra homomorphisms. 
\end{defn}

\begin{prop} \label{ftm_implies_rtm}
Let $K$ be a differential field, and $L$ a differential $K$-algebra. Suppose that $L$ admits a finite $K$-Taylor morphism $\hat{T}$. Then, $L$ admits a unique restricted $K$-Taylor morphism $\tilde{T}$ whose restriction to differentially finitely generated $K$-algebras is $\hat{T}$.
\end{prop}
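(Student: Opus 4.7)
The plan is to define $\tilde{T}_\phi$ element-wise by restricting to differentially finitely generated subalgebras. Concretely, given a differential $K$-algebra $A$ with a $K$-algebra homomorphism $\phi: A \to K$ and an element $a \in A$, I would set $\tilde{T}_\phi(a) \coloneqq \hat{T}_{\phi \rest_{A_0}}(a)$ for any differentially finitely generated $K$-subalgebra $A_0 \subseteq A$ containing $a$ (for instance, $A_0 = K\{a\}$, the differential $K$-subalgebra generated by $a$). The first thing to check is independence of the choice of $A_0$: given two such subalgebras $A_0, A_1$, their compositum $A_2$ (the differential $K$-subalgebra of $A$ generated by $A_0 \cup A_1$) is again differentially finitely generated, and the inclusions $A_i \hookrightarrow A_2$ are differential $K$-algebra homomorphisms compatible with the restrictions of $\phi$. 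Applying (TM2) of $\hat{T}$ to these inclusions yields $\hat{T}_{\phi \rest_{A_i}}(a) = \hat{T}_{\phi \rest_{A_2}}(a)$ for $i = 0, 1$, so the definition is unambiguous.

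Next, I would verify that $\tilde{T}_\phi: A \to L$ is a differential $K$-algebra homomorphism. For any finite collection of elements in $A$ (say those involved in checking preservation of addition, multiplication, the structure map, or intertwining with a derivation), one can pick a single differentially finitely generated $K$-subalgebra $A_0$ containing them all; then $\tilde{T}_\phi$ agrees with the differential $K$-algebra homomorphism $\hat{T}_{\phi \rest_{A_0}}$ on $A_0$, so the required identities transfer immediately.

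For the axioms: (TM1) is immediate, since if $\phi$ is a differential $K$-algebra homomorphism then so is every restriction $\phi \rest_{A_0}$, and (TM1) applied to $\hat{T}$ gives $\hat{T}_{\phi \rest_{A_0}} = \eta_L \circ \phi \rest_{A_0}$, hence $\tilde{T}_\phi = \eta_L \circ \phi$. For (TM2), given $\chi: A \to B$ a differential $K$-algebra homomorphism and $\psi: B \to K$ with $\phi = \psi \circ \chi$, for $a \in A$ take $A_0 = K\{a\} \subseteq A$ and $B_0 = K\{\chi(a)\} \subseteq B$; then $\chi$ restricts to a differential $K$-algebra homomorphism $A_0 \to B_0$ compatible with the restrictions of $\phi$ and $\psi$, and (TM2) for $\hat{T}$ gives $\tilde{T}_\phi(a) = \hat{T}_{\phi \rest_{A_0}}(a) = \hat{T}_{\psi \rest_{B_0}}(\chi(a)) = \tilde{T}_\psi(\chi(a))$.

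Uniqueness is automatic from (TM2): any restricted $K$-Taylor morphism $\tilde{T}'$ extending $\hat{T}$ must satisfy, by (TM2) applied to the inclusion $\iota: K\{a\} \hookrightarrow A$ with $\phi \rest_{K\{a\}} = \phi \circ \iota$, that $\tilde{T}'_\phi(a) = (\tilde{T}'_\phi \circ \iota)(a) = \tilde{T}'_{\phi \rest_{K\{a\}}}(a) = \hat{T}_{\phi \rest_{K\{a\}}}(a) = \tilde{T}_\phi(a)$. The argument involves no serious obstacle; it is a standard local-to-global construction whose only content is the bookkeeping needed to verify compatibility, which is entirely controlled by the functoriality axiom (TM2) for $\hat{T}$ together with the fact that every finite subset of $A$ lies in some differentially finitely generated $K$-subalgebra.
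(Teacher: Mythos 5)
Your proposal is correct and follows essentially the same route as the paper: both define $\tilde{T}_\phi$ by patching together $\hat{T}$ on differentially finitely generated $K$-subalgebras (the paper phrases this as a union over the directed system of such subalgebras, you phrase it element-wise with well-definedness via the compositum), and both verify (TM1), (TM2), and uniqueness by restricting to the subalgebras generated by the relevant elements and invoking (TM2) for $\hat{T}$ on the inclusion maps.
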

\begin{proof}
Let $A$ be a differential $K$-algebra, and let $\phi: A \to K$ be a $K$-algebra homomorphism. Consider the directed system $B = (B_\alpha, f_{\alpha\beta})$ of differentially finitely generated $K$-subalgebras $B_\alpha \subseteq A$ and inclusion maps $f_{\alpha\beta}: B_\alpha \to B_\beta$. For each $\alpha$, the restriction $\phi_\alpha \coloneqq \phi\rest_{B_\alpha}: B_\alpha \to K$ is a $K$-algebra homomorphism. Further, for any $\alpha, \beta$ with $B_\alpha \subseteq B_\beta$, the triangle
\[
\begin{tikzcd}
B_\alpha \ar[r, "\phi_\alpha"] \ar[d, "f_{\alpha\beta}"'] & K \\
B_\beta \ar[ur, "\phi_\beta"']
\end{tikzcd}
\]
commutes. We note also that $f_{\alpha\beta}$ is a differential $K$-algebra homomorphism. Thus, as $\hat{T}$ satisfies (TM2), we conclude that the diagram
\[
\begin{tikzcd}
B_\alpha \ar[r, "\hat{T}_{\phi_\alpha}"] \ar[d, "f_{\alpha\beta}"'] & L \\
B_\beta \ar[ur, "\hat{T}_{\phi_\beta}"']
\end{tikzcd}
\]
of differential morphisms also commutes. Set $\tilde{T}_\phi$ to be the union $\bigcup_\alpha \hat{T}_{\phi_\alpha}$ which is well defined by the commutativity of the above diagram. We now claim that $\tilde{T}$ is a restricted $K$-Taylor morphism for $L$.

We establish that (TM1) holds for $\tilde{T}$. Suppose that $\phi$ is differential. 
Let $B$ be any differentially finitely generated $K$-subalgebra of $A$. By the construction of $\tilde{T}$, we have that $\tilde{T}_\phi \rest_B = \hat{T}_{\phi\rest_B} = \eta_L \circ (\phi\rest_B)$. As this holds for arbitrary differentially finitely generated $K$-subalgebras, we have that $\tilde{T}_\phi = \eta_L \circ \phi$.

For (TM2), let $B$ be another differential $K$-algebra, $\psi: B \to K$ be a $K$-algebra homomorphism and $\chi: A \to B$ be a differential $K$-algebra homomorphism such that
\[
\begin{tikzcd}
A \ar[r, "\phi"] \ar[d, "\chi"'] & K \\
B \ar[ur, "\psi"']
\end{tikzcd}
\]
commutes. Let $a \in A$, and let $C$ be the differential $K$-subalgebra of $A$ generated by $a$. Let $D$ be the differential $K$-subalgebra of $B$ generated by $\chi(a)$. Denote the restrictions $\phi\rest_C$, $\chi \rest_C$ and $\psi\rest_D$ by $\phi_C$, $\chi_C$ and $\psi_D$, respectively. Then, the following commutes:
\[
\begin{tikzcd}
C \ar[r, "\phi_C"] \ar[d, "\chi_C"'] & K \\
D \ar[ur, "\psi_D"']
\end{tikzcd}
\]
by definition. Since $C$ and $D$ are finitely generated differential $K$-algebras, and $\chi_C$ is a differential $K$-algebra homomorphism, we also have that
\[
\begin{tikzcd}
C \ar[r, "\hat{T}_{\phi_C}"] \ar[d, "\chi_C"'] & L \\
D \ar[ur, "\hat{T}_{\psi_D}"']
\end{tikzcd}.
\]
commutes by (TM2) for $\hat{T}$. Thus, for any $a \in A$, we have by construction that $\tilde{T}_\phi(a) = T_\psi \circ \chi(a)$, and the diagram
\[
\begin{tikzcd}
A \ar[r, "\tilde{T}_\phi"] \ar[d, "\chi"'] & L \\
B \ar[ur, "\tilde{T}_\psi"']
\end{tikzcd}
\]
commutes, as required. It is clear that the restriction of $\tilde{T}$ to differentially finitely generated $K$-algebras is indeed $\hat{T}$.

For uniqueness, suppose that $\tilde{T}$ and $\tilde{S}$ are restricted $K$-Taylor morphisms for $L$ such that their restrictions $\hat{T}$ and $\hat{S}$ to differentially finitely generated $K$-algebras are equal. Let $A$ be a differential $K$-algebra and $\phi: A \to K$ a $K$-algebra homomorphism. Let $a \in A$ and let $B$ be the differential $K$-subalgebra of $A$ generated by $a$, and $\psi \coloneqq \phi \rest_B$. Since $B$ is differentially finitely generated, $T_\psi = S_\psi$ by assumption. Since the triangle
\[
\begin{tikzcd}
B \ar[r, "\psi"] \ar[d, hook] & K \\
A \ar[ur, "\phi"']
\end{tikzcd}
\]
commutes, and the inclusion map $B \subseteq A$ is differential, we have that
\[
\tilde{T}_\phi(a) = \hat{T}_\psi(a) = \hat{S}_\psi(a) = \tilde{S}_\phi(a)
\]
and $\tilde{T} = \tilde{S}$, as required.
\end{proof}

We will now extend our restricted $K$-Taylor morphism to a full $K$-Taylor morphism. Our goal is the following:

\begin{prop} \label{rtm_implies_tm}
    Let $K$ be a differential ring, and $L$ a differential $K$-algebra. Suppose $L$ admits a restricted $K$-Taylor morphism $\tilde{T}$. Then it admits a unique $K$-Taylor morphism $T$ whose restriction to differential $K$-algebras is $\tilde{T}$.
\end{prop}

\begin{nota} \index[nota]{$A \otimes B$} \index[nota]{$\phi\cdot\psi$}
    For differential rings $(A, \bdelta), (B, \bpartial)$, $A \otimes_\Z B$ denotes their tensor product over $\Z$ equipped with the derivations $\bd$ given by $\dd_i(a \otimes b) = \delta_i(a) \otimes b + a \otimes \partial_i(b)$. This is the coproduct in the category of differential rings. The canonical maps from $A, B$ into their coproduct are denoted by $\iota_A: A \to A \otimes_\Z B$ and $\iota_B : B \to A \otimes_\Z B$. 

    For $\phi: A \to C$ and $\psi: B \to C$ (differential) ring homomorphisms, by the universal property of the coproduct, there is a unique (differential) ring homomorphism denoted by $\phi\cdot\psi: A \otimes_\Z B \to C$ such that
    \[
    \begin{tikzcd}
        A \ar[d, "\iota_A"'] \ar[dr, "\phi", bend left] \\
        A \otimes_\Z B \ar[r, "\phi \cdot \psi"] & C \\
        B \ar[u, "\iota_B"] \ar[ur, "\psi", bend right]
    \end{tikzcd}
    \]
    commutes. Concretely, $(\phi\cdot\psi)(a \otimes b) = \phi(a)\psi(b)$.
\end{nota}

Throughout the following, $K$ is a differential ring, $L$ is a differential $K$-algebra, and $\tilde{T}$ is a restricted $K$-Taylor morphism for $L$. We begin by constructing suitable differential homomorphisms for each pair $(R, \phi)$, where $R$ is an arbitrary differential ring, and $\phi: R \to K$ is a ring homomorphism.

\begin{lem} \label{rtm_extends_to_all_rings}
    Let $R$ be a differential ring, and $\phi: R \to K$ a ring homomorphism. There is a unique differential ring homomorphism $T_\phi: R \to K$ such that for any differential $K$-algebra $A$ with a differential ring homomorphism $\psi: R \to A$ and a $K$-algebra homomorphism $\chi: A \to K$ with $\chi \circ \psi = \phi$, we have that $T_\phi = \tilde{T}_\chi \circ \psi$.
\end{lem}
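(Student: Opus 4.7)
The natural candidate for $T_\phi$ is obtained by factoring $\phi$ through the coproduct of $R$ and $K$ in the category of differential rings, namely $A_0 := R \otimes_\Z K$ with the derivations $\dd_i(r \otimes k) = \delta_i(r) \otimes k + r \otimes \delta_i(k)$. This $A_0$ is naturally a differential $K$-algebra with structure map $\iota_K$, the canonical inclusion $\iota_R : R \to A_0$ is a differential ring homomorphism, and by the universal property of the coproduct there is a unique differential ring homomorphism $\chi_0 := \phi \cdot \id_K : A_0 \to K$ satisfying $\chi_0 \circ \iota_R = \phi$ and $\chi_0 \circ \iota_K = \id_K$. The latter equation means $\chi_0$ is a $K$-algebra homomorphism, so $\tilde{T}_{\chi_0}$ is defined. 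I set $T_\phi := \tilde{T}_{\chi_0} \circ \iota_R$, a composition of differential ring homomorphisms, hence differential.

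To verify the universal property stated in the lemma, suppose $A$ is a differential $K$-algebra with structure map $\eta_A$, $\psi : R \to A$ is a differential ring homomorphism, and $\chi : A \to K$ is a $K$-algebra homomorphism with $\chi \circ \psi = \phi$. The universal property of the coproduct applied to $\psi$ and $\eta_A$ produces a unique differential ring homomorphism $\xi := \psi \cdot \eta_A : A_0 \to A$ with $\xi \circ \iota_R = \psi$ and $\xi \circ \iota_K = \eta_A$; the second equation makes $\xi$ a $K$-algebra homomorphism. Precomposing the map $\chi \circ \xi : A_0 \to K$ with $\iota_R$ and $\iota_K$ gives $\phi$ and $\id_K$ respectively, so by the uniqueness clause of the coproduct's universal property $\chi \circ \xi = \chi_0$. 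Since $\xi$ is a differential $K$-algebra homomorphism, axiom (TM2) for $\tilde{T}$ yields $\tilde{T}_{\chi_0} = \tilde{T}_\chi \circ \xi$, and composing with $\iota_R$ gives
\[
T_\phi = \tilde{T}_{\chi_0} \circ \iota_R = \tilde{T}_\chi \circ \xi \circ \iota_R = \tilde{T}_\chi \circ \psi,
\]
as required.

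Uniqueness is immediate: any differential ring homomorphism $T'_\phi : R \to L$ satisfying the stated property must in particular satisfy it for the distinguished triple $(A_0, \iota_R, \chi_0)$, forcing $T'_\phi = \tilde{T}_{\chi_0} \circ \iota_R = T_\phi$. The entire argument is a diagram chase built on the coproduct structure of $A_0$, and the only step requiring any care is recognising that $\chi \circ \xi$ and $\chi_0$ must agree by uniqueness in the coproduct universal property; everything else is automatic from the construction and from (TM2) applied to $\tilde{T}$. I do not anticipate a substantial obstacle beyond this bookkeeping.
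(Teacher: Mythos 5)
Your proposal is correct and follows essentially the same route as the paper: both define $T_\phi := \tilde{T}_{\phi\cdot\id_K}\circ\iota_R$ using the coproduct $R\otimes_\Z K$, verify the compatibility condition by factoring $\chi\circ(\psi\cdot\eta_A)=\phi\cdot\id_K$ through the coproduct's universal property and then applying (TM2) for $\tilde{T}$, and obtain uniqueness by specialising to the triple $(R\otimes_\Z K,\iota_R,\phi\cdot\id_K)$. Your explicit appeal to the uniqueness clause of the coproduct to identify $\chi\circ\xi$ with $\chi_0$ is exactly the step the paper carries out via its commutative diagram.
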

\begin{proof}
    Let $A, \phi, \psi$ satisfy the above hypotheses. Observe that the following diagram commutes:
    \[
    \begin{tikzcd}
        & R \otimes_\Z K \ar[drr, "\phi\cdot \id_K", bend left=9] \ar[dd, "\psi\cdot\eta_A"] \\
        K \ar[ur, "\iota_K"] \ar[dr, "\eta_A"] & & R \ar[ul, "\iota_R"] \ar[dl, "\psi"'] \ar[r, "\phi"] & K \\
        & A \ar[urr, "\chi"', bend right=9]
    \end{tikzcd}
    \]
    The left square commutes by the universal property of the coproduct. Now, the lower right triangle commutes by assumption, and the upper right triangle commutes again by the universal property of coproducts. We now observe that we have the following commutative triangle of $K$-algebra homomorphisms:
    \[
    \begin{tikzcd}
        R \otimes_\Z K  \ar[d, "\psi\cdot\eta_A"'] \ar[r, "\phi \cdot \id"] & K \\
        A \ar[ur, "\chi"']
    \end{tikzcd}
    \]
    where $\psi\cdot\eta_A$ is differential. Applying $\tilde{T}$ to the above triangle, and adding maps from $R$, we obtain the diagram
    \[
    \begin{tikzcd}
        R \ar[r, "\iota_R"] \ar[dr, "\psi"'] & R \otimes_\Z K  \ar[d, "\psi\cdot\eta_A"] \ar[r, "\tilde{T}_{\phi \cdot \id_K}"] & L \\
        & A \ar[ur, "\tilde{T}_\chi"']
    \end{tikzcd}
    \]
    which commutes, as the right triangle commutes by property (TM2) of $\tilde{T}$, and the left triangle is a subdiagram of the first diagram. Define $T_\phi: R \to L$ to be $\tilde{T}_{\phi\cdot\id_K} \circ \iota_R$. Thus, we have shown that for any $A, \psi, \chi$ satisfying the hypothesis in the lemma, we have that $\tilde{T}_\chi \circ \psi = \tilde{T}_{\phi\cdot \id} \circ \iota_R = T_\phi$, as required.

    For uniqueness, let $T_\phi$ satisfy the above condition. Then, $R \otimes_\Z K$ is a differential $K$-algebra, $\iota_R: R \to R \otimes_\Z K$ is a differential ring homomorphism and $\phi \cdot \id_K: R \otimes_\Z K \to K$ is a $K$-algebra homomorphism such that $(\phi \cdot \id_K) \circ \iota_R = \phi$. Thus by the above condition, $T_\phi = \tilde{T}_{\phi\cdot\id_K} \circ \iota_R$, as required.
\end{proof}

Now, for any differential ring $A$ and ring homomorphism $\phi:A \to K$, define $T_\phi: R \to L$ as in Lemma \ref{rtm_extends_to_all_rings}, i.e.
\[
T_\phi = \tilde{T}_{\phi \cdot \id_K} \circ \iota_R.
\]
We will establish that $T$ is indeed a $K$-Taylor morphism for $L$. First, we show that (TM2) holds for $T$.

\begin{lem} \label{rtm_extends_to_all_rings_ttm2}
     Let $A, B$ be differential rings, $\phi: A \to K$, $\psi: B \to K$ ring homomorphisms, and $\chi: A \to B$ a differential ring homomorphism. If $\phi = \psi \circ \chi$, then $T_\phi = T_\phi \circ \chi$.
\end{lem}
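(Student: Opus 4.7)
The plan is to invoke the uniqueness clause of Lemma \ref{rtm_extends_to_all_rings} with a carefully chosen witness. Recall that $T_\phi: A \to L$ is characterised by the property that $T_\phi = \tilde{T}_{\chi'} \circ \psi'$ whenever $C$ is a differential $K$-algebra, $\psi': A \to C$ is a differential ring homomorphism, and $\chi': C \to K$ is a $K$-algebra homomorphism with $\chi' \circ \psi' = \phi$. So to prove that $T_\psi \circ \chi$ equals $T_\phi$, it suffices to exhibit such a triple $(C, \psi', \chi')$ for which $\tilde{T}_{\chi'} \circ \psi' = T_\psi \circ \chi$.

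The natural candidate comes from $B$ itself: set $C := B \otimes_\Z K$, with $\psi' := \iota_B \circ \chi$ and $\chi' := \psi \cdot \id_K$. Here $\psi'$ is differential because $\chi$ is differential by hypothesis and $\iota_B$ is differential by construction of the coproduct in $\DRing$; $\chi'$ is a $K$-algebra homomorphism by the universal property of the coproduct (it restricts to $\id_K$ on $K$). The compatibility
\[
\chi' \circ \psi' \;=\; (\psi \cdot \id_K) \circ \iota_B \circ \chi \;=\; \psi \circ \chi \;=\; \phi
\]
is immediate from the defining commuting triangle of $\psi \cdot \id_K$. Hence by Lemma \ref{rtm_extends_to_all_rings},
\[
T_\phi \;=\; \tilde{T}_{\psi \cdot \id_K} \circ \iota_B \circ \chi.
\]

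It remains to recognise the prefix $\tilde{T}_{\psi \cdot \id_K} \circ \iota_B$ as $T_\psi$. But this is exactly the formula for $T_\psi$ that was produced in the proof of Lemma \ref{rtm_extends_to_all_rings} when that lemma is applied to the pair $(B, \psi)$ together with the triple $(B \otimes_\Z K,\, \iota_B,\, \psi \cdot \id_K)$. So $T_\psi \circ \chi = \tilde{T}_{\psi \cdot \id_K} \circ \iota_B \circ \chi = T_\phi$, as required.

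I do not expect any serious obstacle: the entire statement is essentially a bookkeeping consequence of the uniqueness clause of Lemma \ref{rtm_extends_to_all_rings}, and the only thing to be careful about is choosing a single witness $(C, \psi', \chi')$ that simultaneously computes $T_\phi$ and factors through $\chi$. Using $B \otimes_\Z K$ with $\iota_B \circ \chi$ and $\psi \cdot \id_K$ accomplishes both tasks at once.
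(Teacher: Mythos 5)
Your proof is correct. The verification of the witness triple is sound: $\iota_B\circ\chi$ is differential, $\psi\cdot\id_K$ is a $K$-algebra homomorphism, and $(\psi\cdot\id_K)\circ\iota_B\circ\chi = \psi\circ\chi = \phi$, so the ``for any witness'' clause of Lemma \ref{rtm_extends_to_all_rings} applied to $(A,\phi)$ immediately yields $T_\phi = \tilde{T}_{\psi\cdot\id_K}\circ\iota_B\circ\chi = T_\psi\circ\chi$. (One small terminological point: what you are really invoking is the universally quantified existence clause of that lemma --- that \emph{every} admissible triple computes $T_\phi$ --- rather than its uniqueness clause; uniqueness is not needed here.) The paper's proof takes a slightly different route: it constructs the differential $K$-algebra homomorphism $\chi\otimes\id_K: A\otimes_\Z K \to B\otimes_\Z K$, observes that $(\psi\cdot\id)\circ(\chi\otimes\id_K) = \phi\cdot\id$, and applies (TM2) for $\tilde{T}$ to that triangle to conclude $\tilde{T}_{\phi\cdot\id} = \tilde{T}_{\psi\cdot\id}\circ(\chi\otimes\id_K)$, whence $T_\phi = T_\psi\circ\chi$ after precomposing with $\iota_A$. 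Your version avoids introducing $A\otimes_\Z K$ and $\chi\otimes\id_K$ altogether by reusing the witness-independence already established in Lemma \ref{rtm_extends_to_all_rings}; the cost is that the appeal to (TM2) for $\tilde{T}$ is hidden inside that lemma's proof rather than displayed, but the argument is shorter and arguably cleaner. Both proofs establish the intended conclusion $T_\phi = T_\psi\circ\chi$ (note the statement as printed contains a typo, $T_\phi = T_\phi\circ\chi$).
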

\begin{proof}
    Let $\iota_A: A \to A\otimes_\Z K$ and $\iota_B: B \to B \otimes_\Z K$ denote the canonical maps into the tensor product. We form the following commutative diagram:
    \[
    \begin{tikzcd}
        A \ar[r, "\iota_A"] \ar[d, "\chi"'] \ar[rr, bend left=40, "\phi"]& A \otimes_\Z K \ar[d, "\chi \otimes \id_K"'] \ar[r, "\phi \cdot \id"] & K\\
        B  \ar[r, "\iota_B"] \ar[rru, bend right=70, "\psi"'] & B \otimes_\Z K \ar[ur, "\psi \cdot \id"']
    \end{tikzcd}
    \]
    Applying the restricted Taylor morphism $\tilde{T}$ to the right triangle and applying (TM2) for $\tilde{T}$, and by the definition of $T_\phi, T_\psi$, we obtain the following commutative diagram:
    \[
    \begin{tikzcd}
        A \ar[r, "\iota_A"] \ar[d, "\chi"'] \ar[rr, bend left=50, "T_\phi"]& A \otimes_\Z K \ar[d, "\chi \otimes \id_K"'] \ar[r, "\tilde{T}_{\phi \cdot \id}"] & L\\
        B  \ar[r, "\iota_B"] \ar[rru, bend right=90, "T_\psi"'] & B \otimes_\Z K \ar[ur, "\tilde{T}_{\psi \cdot \id}"']
    \end{tikzcd}
    \]
    In particular, we have that $T_\phi = T_\psi \circ \chi$.
\end{proof}

Finally, we show that (TM1) holds for $T$.

\begin{lem} \label{rtm_extends_to_all_rings_ttm1}
    Let $A$ be a differential ring, $\phi: A \to K$ a differential ring homomorphism. Let $\eta_L: K \to L$ denote the structure map. Then, $T_\phi = \eta_L \circ \phi$.
\end{lem}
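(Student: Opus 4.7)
The plan is to exploit the defining property of $T_\phi$ from Lemma \ref{rtm_extends_to_all_rings} by choosing the most economical possible factorisation of $\phi$. Recall that $T_\phi: A \to L$ is characterised by: for every differential $K$-algebra $A'$, every differential ring homomorphism $\psi: A \to A'$, and every $K$-algebra homomorphism $\chi: A' \to K$ with $\chi \circ \psi = \phi$, one has $T_\phi = \tilde{T}_\chi \circ \psi$. Under the hypothesis that $\phi$ is itself differential, the trivial factorisation $\phi = \id_K \circ \phi$ is available.

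Concretely, I would take $A' = K$ (regarded as a differential $K$-algebra via $\id_K$), $\psi = \phi$ (which is differential by hypothesis), and $\chi = \id_K$ (trivially a $K$-algebra homomorphism). All conditions of the defining property are satisfied, so
\[
T_\phi = \tilde{T}_{\id_K} \circ \phi.
\]
It then suffices to identify $\tilde{T}_{\id_K}$. Since $\id_K: K \to K$ is simultaneously a $K$-algebra homomorphism and a differential ring homomorphism, (TM1) for the restricted $K$-Taylor morphism $\tilde{T}$ yields $\tilde{T}_{\id_K} = \eta_L \circ \id_K = \eta_L$. Substituting gives $T_\phi = \eta_L \circ \phi$, as desired.

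There is no genuine obstacle in this lemma: all of the substantive content has already been absorbed into the construction in Lemma \ref{rtm_extends_to_all_rings} and into the axioms satisfied by $\tilde{T}$. The only thing to watch is the distinction between `differential ring homomorphism' and `differential $K$-algebra homomorphism' when invoking (TM1) for $\tilde{T}$; this distinction is harmless for the map $\id_K$ since it trivially respects the $K$-algebra structure. An alternative route — unwinding the explicit formula $T_\phi = \tilde{T}_{\phi \cdot \id_K} \circ \iota_A$ and checking that $\phi \cdot \id_K$ is a differential $K$-algebra homomorphism when $\phi$ is differential, then applying (TM1) — works equally well but involves slightly more bookkeeping than the uniqueness-based argument above.
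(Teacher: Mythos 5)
Your proof is correct, and it takes a mildly different route from the paper's. The paper proves the lemma by unwinding the explicit definition $T_\phi = \tilde{T}_{\phi\cdot\id_K} \circ \iota_A$ from Lemma \ref{rtm_extends_to_all_rings}: it observes that when $\phi$ is differential, the map $\phi\cdot\id_K: A \otimes_\Z K \to K$ is a differential $K$-algebra homomorphism, applies (TM1) for $\tilde{T}$ to get $\tilde{T}_{\phi\cdot\id_K} = \eta_L \circ (\phi\cdot\id_K)$, and composes with $\iota_A$. You instead invoke the characterising property stated in Lemma \ref{rtm_extends_to_all_rings} with the factorisation $\phi = \id_K \circ \phi$ through $K$ itself, which is legitimate: $K$ is a differential $K$-algebra, $\phi$ is differential by hypothesis, and $\id_K$ is a $K$-algebra homomorphism, so the quoted property gives $T_\phi = \tilde{T}_{\id_K} \circ \phi$ directly, and (TM1) for $\tilde{T}$ identifies $\tilde{T}_{\id_K}$ as $\eta_L$. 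Your route is marginally cleaner in that it avoids touching the tensor product at all, at the cost of relying on the ``for any $A', \psi, \chi$'' clause of Lemma \ref{rtm_extends_to_all_rings} rather than just the formula; the paper's route is the more self-contained computation. The alternative you sketch at the end is precisely the paper's argument. Both are sound.
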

\begin{proof}
    By definition, $T_\phi = \tilde{T}_{\phi\cdot\id_K} \circ \iota_R$. Since both $\phi$ and $\id_K$ are differential, $\phi\cdot\id_K: A \otimes_\Z K \to K$ is a differential ring homomorphism. In particular, it is a differential $K$-algebra homomorphism. Applying (TM1) for $\tilde{T}$, we obtain that $\tilde{T}_{\phi\cdot\id_K} = \eta_L \circ (\phi\cdot\id_K)$. Composing, we get that
    \[
    T_\phi = \eta_L \circ (\phi\cdot\id_K) \circ \iota_R = \eta_L \circ \phi
    \]
    as required.
\end{proof}

\begin{proof}
    Existence is by Lemmas \ref{rtm_extends_to_all_rings}, \ref{rtm_extends_to_all_rings_ttm2} and \ref{rtm_extends_to_all_rings_ttm1}. Uniqueness follows from the uniqueness condition in Lemma \ref{rtm_extends_to_all_rings} as any such Taylor morphism must satisfy the hypothesis in this lemma.
\end{proof}

\begin{prop} \label{ftm_implies_tm}
    Suppose $L$ admits a finite $K$-Taylor morphism $\hat{T}$. Then, it admits a unique $K$-Taylor morphism $T$ whose restriction to finitely differentially generated $K$-algebras is $\hat{T}$.
\end{prop}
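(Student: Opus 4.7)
The plan is to obtain this as an immediate composition of the two extension results already established. First, I would apply Proposition \ref{ftm_implies_rtm} to the given finite $K$-Taylor morphism $\hat{T}$, producing a restricted $K$-Taylor morphism $\tilde{T}$ for $L$ whose restriction to differentially finitely generated $K$-algebras is $\hat{T}$. Then I would apply Proposition \ref{rtm_implies_tm} to $\tilde{T}$, producing a $K$-Taylor morphism $T$ for $L$ whose restriction to pairs $(A,\phi)$ with $A$ a differential $K$-algebra and $\phi$ a $K$-algebra homomorphism is exactly $\tilde{T}$. The restriction of $T$ to differentially finitely generated $K$-algebras is then the restriction of $\tilde{T}$ to such algebras, which by construction is $\hat{T}$. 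This settles existence.

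For uniqueness, suppose $T'$ is any $K$-Taylor morphism for $L$ whose restriction to differentially finitely generated $K$-algebras agrees with $\hat{T}$. Write $\tilde{T}'$ for the restriction of $T'$ to pairs $(A,\phi)$ where $A$ is a differential $K$-algebra and $\phi$ is a $K$-algebra homomorphism. The axioms (TM1) and (TM2) for $T'$, together with Proposition \ref{k_tm_preserves_k_algs_prop}, directly imply that $\tilde{T}'$ is a restricted $K$-Taylor morphism for $L$, and its further restriction to differentially finitely generated $K$-algebras is $\hat{T}$. Applying the uniqueness clause of Proposition \ref{ftm_implies_rtm} gives $\tilde{T}' = \tilde{T}$, and then the uniqueness clause of Proposition \ref{rtm_implies_tm} forces $T' = T$.

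There is no genuine obstacle here: all the substantive work — building $\tilde{T}_\phi$ as a directed union of the $\hat{T}_{\phi\rest_{B_\alpha}}$ over differentially finitely generated $K$-subalgebras $B_\alpha \subseteq A$, and then passing from $\tilde{T}$ to $T$ via the tensor-product construction $R \otimes_\Z K$ with the map $\phi \cdot \id_K$ — was carried out in the two preceding propositions. The present proposition simply records the composition of those extensions, and the proof reduces to one line invoking each in turn.
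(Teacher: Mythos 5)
Your proposal is correct and is exactly the paper's proof: the paper's argument for this proposition is the one-line composition of Proposition \ref{ftm_implies_rtm} followed by Proposition \ref{rtm_implies_tm}, with the uniqueness clause inherited from the uniqueness statements of those two results just as you describe. Your write-up merely makes explicit the uniqueness bookkeeping that the paper leaves implicit.
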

\begin{proof}
    By Proposition \ref{ftm_implies_rtm} and \ref{rtm_implies_tm}.
\end{proof}

This gives a negative answer to Question \ref{ftm_conj}:

\begin{cor} \label{no_self_ftm_cor}
    Let $K$ be a differential field. Then $K$ does not admit a finite $K$-Taylor morphism.
\end{cor}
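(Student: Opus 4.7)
The plan is to derive this as an immediate combination of two results already in hand, namely Proposition \ref{no_self_ttm} (no differential field admits a $K$-Taylor morphism to itself) and Proposition \ref{ftm_implies_tm} (every finite $K$-Taylor morphism extends uniquely to a full $K$-Taylor morphism on the same target). Since Proposition \ref{no_self_ttm} rules out the larger object, and Proposition \ref{ftm_implies_tm} shows that existence of the smaller object would produce the larger, the corollary follows by contraposition.

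More concretely, I would argue as follows. Assume for contradiction that $K$ admits a finite $K$-Taylor morphism $\hat{T}$, viewing $K$ as a differential $K$-algebra via $\id_K$. By Proposition \ref{ftm_implies_tm} applied with $L = K$, there exists a (unique) $K$-Taylor morphism $T$ for $K$ whose restriction to differentially finitely generated $K$-algebras coincides with $\hat{T}$. But then $K$ admits a $K$-Taylor morphism into itself, directly contradicting Proposition \ref{no_self_ttm}.

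There is no real obstacle here: the work has already been done in establishing Propositions \ref{no_self_ttm} and \ref{ftm_implies_tm}, and the corollary is just the two-line contrapositive chain. The only thing worth being careful about is confirming that the hypothesis of Proposition \ref{ftm_implies_tm} truly applies with $L = K$; but since $K$ is trivially a differential $K$-algebra (with structure map $\id_K$, which is a differential ring homomorphism), this is immediate, and no further setup is needed.
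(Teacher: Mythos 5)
Your proof is correct and is essentially the paper's own argument: the paper likewise deduces the corollary by lifting a hypothetical finite $K$-Taylor morphism for $K$ to a full $K$-Taylor morphism (via the extension results of Section \ref{restrictions_section}) and then invoking Proposition \ref{no_self_ttm}. Your check that $K$ is a differential $K$-algebra via $\id_K$ is the only hypothesis verification needed, and you handle it correctly.
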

\begin{proof}
    This follows from Proposition \ref{rtm_implies_tm} and Proposition \ref{no_self_ttm}.
\end{proof}

\begin{rmk}
    We recall that if $K$ is a differentially large field, then every differentially finitely generated $K$-algebra $A$ with an algebraic $K$-point also has a differential $K$-point. Corollary \ref{no_self_ftm_cor} simply states that there is no `functorial' way to find these points.
\end{rmk}

\section{The Universal Taylor Morphism} \label{utm_section}

We apply the results of the previous section, and show that every $K$-Taylor morphism may be viewed as a cocone of a certain small diagram in the category of differential $K$-algebras. 
In doing so, we obtain the existence of a `universal' Taylor morphism, i.e. for any differential ring $K$, there exists a $K$-Taylor morphism $T^*$ such that every $K$-Taylor morphism $S$ factors uniquely through $T^*$.



    

Let $U: K\DAlg_\dfg \to K\Alg$ be the forgetful functor (into the category of all $K$-algebras). 
Consider the comma category $(U \downarrow 1_K)$, which is skeletally small. 
We let $\mathcal{D}_K$ be a skeleton of $(U \downarrow 1_K)$ which satisfies the property that for any two non-isomorphic objects $(A, \bullet, \phi)$ and $(B, \bullet, \psi)$ in $\mathcal{D}_K$, $D(A, \bullet, \phi) \neq D(B, \bullet, \psi)$, where $D: \mathcal{D}_K \to K\DAlg_\dfg$ is the restriction of the domain functor. 
This can be achieved by choosing representatives whose underlying differential $K$-algebras are pairwise distinct (but possibly isomorphic). 
That is, for each object $(A, \bullet, \phi)$ in $(U \downarrow 1_K)$, we include a representative $(A^\phi, \bullet, \phi)$, where $A^\phi$ is a copy of $A$ in $K\DAlg_\dfg$ which is labelled by $\phi$. 
In particular, if $\phi, \psi: A \to K$ are distinct morphisms, we consider $A^\phi$ and $A^\psi$ to be distinct objects in $K\DAlg_\dfg$.

We now consider the diagram $D: \mathcal{D}_K \to K\DAlg$, where $K\DAlg_\dfg$ is considered as a subcategory of $K\DAlg$ under the inclusion functor.

\begin{nota}
    We will denote a cocone of $D: \mathcal{D}_K \to K\DAlg$ by a pair $(L, \tau)$, where $L$ is the sink of the cocone, and for each object $A^\phi$ in $D(\mathcal{D}_K)$, the component at $A^\phi$ is denoted $\tau_{A^\phi}: A^\phi \to L$.
\end{nota}

\begin{prop} \label{taylor_morphism_is_cone}
    Let $K$ be a differential ring, and let $(L, \tau)$ be a cocone of $D: \mathcal{D}_K \to K\DAlg$. 
    For each pair $(A, \tilde\phi)$, where $A$ is a differentially finitely generated $K$-algebra and $\tilde\phi: A \to K$ is a $K$-algebra homomorphism, let $(A^\phi, \bullet, \phi)$ be the representative of the $(U \downarrow 1_K)$-isomorphism class of $(A, \bullet, \tilde\phi)$ in $\mathcal{D}_K$. 
    Let $(\theta, \id): (A, \bullet, \tilde\phi) \to (A^\phi, \bullet, \phi)$ be an isomorphism, and set $T_{\tilde\phi} = \tau_{A^\phi} \circ \theta: A \to L$. 
    Then, $T$ is well-defined and is a finite $K$-Taylor morphism for $L$.

    Conversely, if $T$ is a finite $K$-Taylor morphism for $L$, for each object $(A^\phi, \bullet, \phi) \in \mathcal{D}_K$, set $\tau_{A^\phi} = T_\phi: A^\phi \to L$. 
    Then, $(L, \tau)$ is a cocone for the diagram $D$.

    This gives a bijective correspondence between cocones of $\mathcal{D}_K$ and finite $K$-Taylor morphisms (and hence also $K$-Taylor morphisms).
\end{prop}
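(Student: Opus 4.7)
The plan is to verify each direction of the claimed correspondence and then check that the two constructions are mutually inverse. The final clause about $K$-Taylor morphisms then follows immediately from Proposition \ref{ftm_implies_tm}, which guarantees that a finite $K$-Taylor morphism extends uniquely to a $K$-Taylor morphism. The structural fact I will rely on throughout is that $K$, regarded as a differential $K$-algebra via $\eta_K = \id_K$, is the initial object of $K\DAlg$; in particular, for any cocone $(L, \tau)$ of $D$, the component at the representative of $(K, \bullet, \id_K) \in \mathcal{D}_K$ is forced to equal the structure map $\eta_L: K \to L$, since it is a differential $K$-algebra homomorphism out of an initial object.

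For the direction from cocones to finite Taylor morphisms, well-definedness of $T_{\tilde\phi}$ is the first thing to verify: if $\theta_1, \theta_2$ are two isomorphisms $(A, \bullet, \tilde\phi) \to (A^\phi, \bullet, \phi)$ in $\mathcal{D}_K$, then $\theta_2 \circ \theta_1^{-1}$ is an automorphism of the representative, and the cocone condition applied to this automorphism yields $\tau_{A^\phi} \circ \theta_1 = \tau_{A^\phi} \circ \theta_2$. For axiom (TM1), if $\tilde\phi$ is differential, then $\phi: A^\phi \to K$ is also a differential $K$-algebra homomorphism, so $(\phi, \id_\bullet): (A^\phi, \bullet, \phi) \to (K, \bullet, \id_K)$ is a morphism in $\mathcal{D}_K$; combining the cocone condition with the initial-object identity $\tau_K = \eta_L$ then gives $T_{\tilde\phi} = \eta_L \circ \tilde\phi$. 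For (TM2), given a differential morphism $\chi: A \to B$ with $\tilde\phi = \tilde\psi \circ \chi$ and isomorphisms $\theta_A: A \to A^\phi$, $\theta_B: B \to B^\psi$, the composite $\theta_B \circ \chi \circ \theta_A^{-1}: A^\phi \to B^\psi$ is a morphism in $\mathcal{D}_K$, and the cocone condition applied to it unwinds to $T_{\tilde\psi} \circ \chi = T_{\tilde\phi}$.

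For the reverse direction, I first observe that (TM1) and (TM2) together force each $T_\phi$ to be a differential $K$-algebra homomorphism, exactly as in the proof of Proposition \ref{k_tm_preserves_k_algs_prop}: apply (TM2) to the factorisation $\id_K = \phi \circ \eta_A$ and then (TM1) to $\id_K$. Thus setting $\tau_{A^\phi} \coloneqq T_\phi$ produces well-typed cocone components in $K\DAlg$, and the cocone naturality condition for a morphism $(\chi, \id_\bullet): (A^\phi, \bullet, \phi) \to (B^\psi, \bullet, \psi)$ in $\mathcal{D}_K$ reads $T_\psi \circ \chi = T_\phi$, which is precisely (TM2) for the factorisation $\phi = \psi \circ \chi$.

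The two constructions are mutually inverse. Starting from a cocone and reconstructing, the object $(A^\phi, \bullet, \phi)$ is already its own representative, so the isomorphism $\theta$ in the construction may be taken to be the identity and the rebuilt component is $\tau_{A^\phi}$. Starting from a finite Taylor morphism $T$, the rebuilt $T'_{\tilde\phi} = T_\phi \circ \theta$ equals $T_{\tilde\phi}$ by (TM2) applied to the differential $K$-algebra isomorphism $\theta: A \to A^\phi$ with $\phi \circ \theta = \tilde\phi$. The main delicate point throughout is the bookkeeping with labelled copies and isomorphism classes, together with the essential use of $K$ being initial in $K\DAlg$ to turn what is a priori just a cone-theoretic compatibility into the axiom (TM1).
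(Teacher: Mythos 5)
Your proposal is correct and follows essentially the same route as the paper: well-definedness via automorphisms of the chosen representative, (TM2) as a direct restatement of the cocone condition, (TM1) via the morphism to $(K,\bullet,\id_K)$ together with the fact that $K$ is initial in $K\DAlg$ (the paper phrases this as applying (TM2) to $\phi=\id\circ\phi$ and noting $T_{\id}=\eta_L$), and the final clause via Proposition \ref{ftm_implies_tm}. You are merely more explicit than the paper about the isomorphism bookkeeping and the mutual-inverse check, which the paper leaves as "clearly".
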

\begin{proof}
    We begin with the forward direction. Let $(L, \tau)$ be a cocone for $D$, and define $T$ as above. Let $A$ be a differentially finitely generated differential $K$-algebra, and $\tilde\phi: A \to K$ a $K$-algebra homomorphism. We first show that $T_{\tilde\phi}$ is well-defined.

    Let $(\theta, \id)$, $(\tilde\theta, \id): (A, \bullet, \tilde\phi) \to (A^\phi, \bullet, \phi)$ be isomorphisms. Then, $(\tilde\theta \circ \theta^{-1}, \id)$ is an automorphism of $(A^\phi, \bullet, \phi)$. Since $(L, \tau)$ is a cocone of $D$, we have that the triangle
    \[
    \begin{tikzcd}
        A^\phi \ar[d, "\tilde\theta \circ \theta^{-1}"'] \ar[r, "\tau_{A^\phi}"] & L \\
        A^\phi \ar[ur, "\tau_{A^\phi}"']
    \end{tikzcd}
    \]
    commutes. In particular, 
    \[
    \tau_{A^\phi} \circ \theta = (\tau_{A^\phi} \circ \tilde\theta \circ \theta^{-1}) \circ \theta = \tau_{A^\phi} \circ \tilde\theta \circ \tau_{A^\phi}
    \]
    and $T$ is well defined.


    It is clear that by applying compositions with suitable isomorphisms, it suffices to show (TM1) and (TM2) for objects in the image of $D$.
    For (TM2), let $A, B$ be differentially finitely generated $K$-algebras, and let $\phi: A \to K$, $\psi: B \to K$ be $K$-algebra homomorphisms, and $\chi: A \to B$ a differential $K$-algebra homomorphism such that $\phi = \psi \circ \chi$. Then, by construction, $(\chi, \id): (A^\phi, \bullet, \phi) \to (B^\psi, \bullet, \psi)$ is a morphism in $\mathcal{D}_K$, and the triangle
    \[
    \begin{tikzcd}
        A^\phi \ar[r, "T_\phi"] \ar[d, "\chi"'] & L \\
        B^\psi \ar[ur, "T_\psi"']
    \end{tikzcd}
    \]
    commutes by construction as $\tau$ is a cocone of $D$.

    For (TM1), observe that if $\phi: A \to K$ is differential, and that $T_\id: K \to L$ is precisely the structure map $\eta_L$, we can apply (TM2) to the triangle
    \[
    \begin{tikzcd}
        A \ar[r, "\phi"] \ar[d, "\phi"'] & L \\
        K \ar[ur, "\id"']
    \end{tikzcd}
    \]
    from which we see that $T_\phi = \eta_L \circ \phi$, as required.

    For the backwards direction, let $T$ be a (finite) Taylor morphism for $L$. Let $\tau$ be as defined above. We need to show that for any morphism $(\chi, \id): (A^\phi, \bullet, \phi) \to (B^\psi, \bullet, \psi)$ in $\mathcal{D}_K$, the triangle
    \[
    \begin{tikzcd}
        A^\phi \ar[r, "\chi"] \ar[dr, "\tau_{A^\phi}"'] & B^\psi \ar[d, "\tau_{B^\psi}"] \\
        & L
    \end{tikzcd}
    \]
    commutes. This is immediate from the axiom (TM2) for $T$, as by assumption, $\chi$ is differential with $\phi = \psi \circ \chi$, and so $T_\phi = T_\psi \circ \chi$, i.e. $\tau_{A^\phi} = \tau_{B^\psi} \circ \chi$, as required.

    The identifications between finite $K$-Taylor morphisms and cocones of $D$ are clearly mutually inverse to one another, thus we obtain the desired bijective correspondence. Further, by Proposition \ref{ftm_implies_tm}, there is a bijection between $K$-Taylor morphisms and finite $K$-Taylor morphisms. Thus, we obtain a bijective correspondence between $K$-Taylor morphisms and cocones of $D$, as required.
\end{proof}

By taking the colimit of the diagram $D$, we are able to find a `universal' $K$-Taylor morphism $T^*$ such that every other $K$-Taylor morphism factors through $T^*$.

\begin{thm} \label{universal_TM_construction_thm} 
    For any differential ring $K$, there is a differential $K$-algebra $K^*$ and $K$-Taylor morphism $T^K$ for $K^*$ such that for any differential $K$-algebra $L$ admitting a $K$-Taylor morphism $T$, there is a unique differential $K$-algebra homomorphism $\theta: K^* \to L$ such that for any differential $K$-algebra $A$ and $K$-algebra homomorphism $\phi: A \to K$, we have that $T_\phi = \theta \circ T^K_\phi$.
\end{thm}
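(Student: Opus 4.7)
The plan is to obtain $(K^*, T^K)$ as the colimit of the diagram $D: \mathcal{D}_K \to K\DAlg$ introduced just before Proposition \ref{taylor_morphism_is_cone}. Since $\mathcal{D}_K$ is a small category and $K\DAlg$ is cocomplete (Proposition \ref{cat_of_diff_K_algs_is_cocomplete_prop}), this colimit exists; call it $(K^*, \tau^*)$, where $K^*$ is a differential $K$-algebra and $\tau^*_{A^\phi}: A^\phi \to K^*$ are the cocone components. By Proposition \ref{taylor_morphism_is_cone}, the colimiting cocone corresponds to a finite $K$-Taylor morphism $\hat{T}^K$ for $K^*$, and by Proposition \ref{ftm_implies_tm}, this extends uniquely to a $K$-Taylor morphism $T^K$ for $K^*$, determined on finitely differentially generated $A$ by $T^K_\phi = \tau^*_{A^\phi}$ (after identifying $A$ with its representative $A^\phi$).

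Next I would verify the universal property. Let $L$ be a differential $K$-algebra admitting a $K$-Taylor morphism $T$. Restricting $T$ to differentially finitely generated $K$-algebras produces a finite $K$-Taylor morphism $\hat{T}$ for $L$, which by Proposition \ref{taylor_morphism_is_cone} corresponds to a cocone $(L, \sigma)$ of $D$ with $\sigma_{A^\phi} = T_\phi$. The universal property of the colimit then yields a unique differential $K$-algebra homomorphism $\theta: K^* \to L$ such that $\sigma_{A^\phi} = \theta \circ \tau^*_{A^\phi}$ for every object $A^\phi$ of $\mathcal{D}_K$. Translating back through the correspondence in Proposition \ref{taylor_morphism_is_cone}, this is precisely $T_\phi = \theta \circ T^K_\phi$ for every pair $(A, \phi)$ with $A$ a differentially finitely generated $K$-algebra and $\phi: A \to K$ a $K$-algebra homomorphism.

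To upgrade this to the identity $T_\phi = \theta \circ T^K_\phi$ for \emph{all} pairs $(A, \phi)$ in the statement, I would observe that the assignment $(A, \phi) \mapsto \theta \circ T^K_\phi$ manifestly satisfies (TM1) and (TM2), hence defines a $K$-Taylor morphism for $L$. Since it agrees with $T$ after restriction to differentially finitely generated $K$-algebras, the uniqueness clauses in Propositions \ref{ftm_implies_rtm} and \ref{rtm_implies_tm} (packaged together in Proposition \ref{ftm_implies_tm}) force the two $K$-Taylor morphisms to coincide. Uniqueness of $\theta$ is immediate from the uniqueness clause in the universal property of the colimit, since any such $\theta$ recovers the cocone $(L, \sigma)$ by composition with $\tau^*$. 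No substantial obstacle is expected: the nontrivial content has been isolated in Propositions \ref{cat_of_diff_K_algs_is_cocomplete_prop}, \ref{taylor_morphism_is_cone}, and \ref{ftm_implies_tm}, and the theorem is essentially a bookkeeping assembly of these three inputs.
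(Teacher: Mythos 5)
Your proposal is correct and follows essentially the same route as the paper: take the colimit of $D: \mathcal{D}_K \to K\DAlg$, use the cocone/Taylor-morphism correspondence of Proposition \ref{taylor_morphism_is_cone} together with Proposition \ref{ftm_implies_tm}, and read off $\theta$ from the universal property of the colimit. The only (harmless) difference is in the final upgrade from differentially finitely generated algebras to all of them: you invoke the uniqueness clause of Proposition \ref{ftm_implies_tm} applied to the $K$-Taylor morphism $(A,\phi)\mapsto\theta\circ T^K_\phi$, whereas the paper argues directly that $T_\phi$ is determined by its restrictions to differentially finitely generated subalgebras; both are valid.
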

\begin{proof}
    As the category of differential $K$-algebras is cocomplete, the diagram $D$ has a initial cone $(K^*, \tau^*)$, and let $\hat{T}^K$ be the finite $K$-Taylor morphism associated to this cocone given by Proposition \ref{taylor_morphism_is_cone}. By Proposition \ref{ftm_implies_tm}, there is a unique $K$-Taylor morphism $T^K$ whose restriction to differentially finitely generated $K$-algebras is $\hat{T}^K$.

    Let $L$ be any differential $K$-algebra admitting a $K$-Taylor morphism $T$. Let $(L, \tau)$ be the cocone of $D$ associated to $T$. As $(K^*, \tau^*)$ is the initial cocone, $(L, \tau)$ factors through $(K^*, \tau^*)$. In particular, there is a unique differential $K$-algebra morphism $\theta: K^* \to L$ such that for any differentially finitely generated $K$-algebra $A$ and $K$-algebra homomorphism $\phi: A \to K$, we have that $T_\phi = \theta \circ T^K_\phi$.

    Now, observe that for any differential $K$-algebra $A$ (not necessarily differentially finitely generated) and $K$-algebra homomorphism $\phi$, $T_\phi$ is completely determined by all of its restrictions to differentially finitely generated $K$-subalgebras of $A$. Thus we still have $T_\phi = \theta \circ T^K_\phi$ in this case.

    Finally, recall from Lemma \ref{rtm_extends_to_all_rings} that for any differential ring $R$ and ring homomorphism $\phi: R \to K$, we have that $T_\phi = T_\chi \circ \psi$, where $\psi: R \to A$ is a differential ring homomorphism, $A$ is a differential $K$-algebra, and $\chi: A \to K$ satisfies $\chi \circ \psi = \phi$. Since $A$ is a differential $K$-algebra, we can apply the above to obtain that $T_\chi = \theta \circ T^K_\chi$, thus $T_\phi = \theta \circ T^K_\chi \circ \psi$. By (TM2) for $T^K$, $T^K_\chi \circ \psi = T^K_\phi$, therefore we recover that $T_\phi = \theta \circ T^K_\phi$, as required.
\end{proof}

\begin{defn} 
    For a differential ring $K$, the \textbf{universal $K$-Taylor morphism} is the $K$-Taylor morphism $T^K$ for $K^*$ as constructed in Theorem \ref{universal_TM_construction_thm}.
\end{defn}

\begin{defn} \label{def_morphs_of_tms}
    Let $K$ be a differential ring, and let $T$ and $S$ be $K$-Taylor morphisms for $L, F$, respectively. A \textbf{morphism of $K$-Taylor morphisms} $\theta: T \to S$ is a $K$-algebra homomorphism $\theta: L \to F$ such that for any differential ring $A$ and ring homomorphism $\phi: A \to K$, $S_\phi = \theta \circ T_\phi$. We say that $\theta: T \to S$ is an \textbf{isomorphism} of $K$-Taylor morphisms if the underlying morphism $\theta: L \to F$ of differential $K$-algebras is an isomorphism. The \textbf{category of $K$-Taylor morphisms} is the category whose objects are $K$-Taylor morphisms and morphisms are as above.
\end{defn}

\begin{rmk}
By definition, the universal $K$-Taylor morphism is the initial object in the category of $K$-Taylor morphisms.
\end{rmk}

\begin{rmk}
    The category of $K$-Taylor morphisms is isomorphic to the category of differential $K^*$-algebras, by identifying each $K$-Taylor morphism with its underlying $K$-algebra.
\end{rmk}

\section{Evaluation Maps} \label{eval_maps_section}

In this section, we discuss the existence of inverses for certain Taylor morphisms. These take the form of a generalised `evaluation map', which capture certain properties of the `evaluation at 0' map $\ev_0: K[[\bt]] \to K$. We will show that the existence of such a map characterises the universal Taylor morphism.

\begin{defn}\label{eval_maps_defn}
	Let $K$ be a differential ring, $L$ a differential $K$-algebra, and $T$ a $K$-Taylor morphism for $L$. An \textbf{evaluation map for $T$} is a $K$-algebra homomorphism $\ev: L \to K$ satisfying the following:
	\begin{itemize}[align=left]
	\item[(EV1)] For any differential ring $A$ and ring homomorphism $\phi: A \to K$, 
	\[
	\ev \circ T_\phi = \phi.
	\]
	\item[(EV2)] $T_\ev: L \to L$ is the identity on $L$.
	\end{itemize}
	If an evaluation map for $T$ exists, we say that $T$ \textbf{admits an evaluation map}.
\end{defn}

\begin{egs} \label{eval_map_egs}
    \begin{enumerate}
        \item \index[nota]{$\ev_0$}Let $K$ be a constant $\Q$-algebra, and let $T$ be the classical Taylor morphism for $(K[[\bt]], \ddbt)$. Then, the `evaluation at 0' map,
        \[
        \ev_0: \sum_\alpha a_\alpha \bt^\alpha \mapsto a_{\bar0}
        \]
        where $\bar{0} = (0,...,0)$, is an evaluation map for $T$. This is easy to verify: for (EV1), let $(A, \bdelta)$ be a differential ring, $\phi: A \to K$ be a ring homomorphism and let $a \in A$. Then,
        \[
        \ev_0(T_\phi(a)) = \ev_0\left( \sum_\alpha \frac{\phi(\delta^\alpha(a))}{\alpha!} \bt^\alpha\right) = \phi(a).
        \]
        For (EV2), let $a= \sum_\alpha a_\alpha \bt^\alpha \in K[[\bt]]$, and compute $T_{\ev_0}(a)$:
        \begin{align}
            T_{\ev_0}(a) &= \sum_\alpha \frac{1}{\alpha!}\ev_0\left(\left(\frac{\dd}{\dd \bt}\right)^\alpha a\right) \bt^\alpha \\
            &= \sum_\alpha \frac{1}{\alpha!} \ev_0 \left(\sum_\beta \frac{(\beta+\alpha)!}{\beta!} a_{\beta+\alpha} \bt^\beta \right) \bt^\alpha \\
            &= \sum_\alpha \frac{\alpha! a_\alpha}{\alpha!} \bt^\alpha = \sum_\alpha a_\alpha \bt^\alpha = a
        \end{align}
        so $T_{\ev_0} = \id$, as required.
        
        \item Let $(K, \bdelta)$ be a $\Q$-algebra, and let $T^*$ be the twisted Taylor morphism for $(K[[\bt]], \bdelta + \ddbt)$. Then $\ev_0: K[[\bt]] \to K$ is also an evaluation map for $T^*$. We verify this as follows: to see that (EV1) holds, let $A$ be a differential ring, $\phi: A \to K$ be a ring homomorphism and $a \in A$. Observe by applying the explicit formula in Example \ref{ttm_eg}, that $\ev_0\circ T^*_\phi(a)$, i.e. the coefficient of $\bt^{\bar{0}}$, is precisely $\phi(a)$. For (EV2), let $\sum_\alpha a_\alpha \bt^\alpha \in K[[\bt]]$, and write $T^*_{\ev_0}(\sum_\alpha a_i \bt^\alpha) = \sum_\alpha b_\alpha \bt^\alpha$. We compute $b_\alpha$:
        \[
            b_\alpha = \frac{1}{\alpha!} \sum_{\beta \leq \alpha} (-1)^{\alpha-\beta} \binom{\alpha}{\beta} \left( \ev_0 \left( \left(\bdelta + \ddbt \right)^\beta \left( \sum_\xi a_\xi \bt^\xi \right) \right) \right).
        \]
        Observe that
        \begin{align}
            \left(\bdelta + \ddbt\right)^\beta\left(\sum_\xi a_\xi \bt^\xi\right) &= \sum_{\gamma \leq \beta} \binom{\beta}{\gamma} \bdelta^{\beta-\gamma} \left(\ddbt\right)^\gamma \left( \sum_\xi a_\xi \bt^\xi \right) \\
            &= \sum_{\gamma \leq \beta} \sum_\xi \frac{(\xi+\gamma)!}{\xi!} \binom{\beta}{\gamma} \bdelta^{\beta-\gamma}(a_{\xi+\gamma}) t^\xi.
        \end{align}
        As $\ev_0((\bdelta + \ddbt)^\beta(\sum_\xi a_\xi \bt^\xi))$ is the coefficient of $\bt^{\bar0}$ in the above sum, we see that
        \[
        \ev_0\left(\left(\bdelta + \ddbt\right)^\beta\left(\sum_\xi a_\xi t^\xi\right)\right) = \sum_{\gamma \leq \beta} \gamma! \binom{\beta}{\gamma} \bdelta^{\beta-\gamma}(a_\gamma).
        \]
        Substituting this into the equation for $b_\alpha$, we obtain that
        { \allowdisplaybreaks
        \begin{align}
            b_\alpha &= \frac{1}{\alpha!} \sum_{\beta \leq \alpha} (-1)^{\alpha-\beta} \binom{\alpha}{\beta} \left( \sum_{\gamma \leq \beta} \gamma! \binom{\beta}{\gamma} \bdelta^{\beta-\gamma}(a_\gamma)\right) \\
            &= \sum_{\beta \leq \alpha} \sum_{\gamma \leq \beta} \frac{(-1)^{\alpha-\beta}\gamma!}{\alpha!} \binom{\alpha}{\beta} \binom{\beta}{\gamma} \bdelta^{\alpha-\gamma}(a_\gamma).
        \end{align}
        }
        Fix some $\gamma \leq \alpha$. Then, the coefficient $c_{\alpha, \gamma}$ of $\bdelta^{\alpha-\gamma}(a_\gamma)$ in the above sum (ranging over $\beta$) is
        \begin{align}
            \sum_{\gamma \leq \beta \leq \alpha} \frac{(-1)^{\alpha-\beta} \gamma!}{\alpha!} \binom{\alpha}{\beta} \binom{\beta}{\gamma} = \sum_{\gamma \leq \beta \leq \alpha} \frac{(-1)^{\alpha-\beta}}{(\alpha-\beta)!(\beta-\gamma)!} \\
            = \frac{(-1)^{\alpha+\gamma}}{(\alpha-\gamma)!} \sum_{\gamma \leq \beta \leq \alpha} \frac{(-1)^{\beta-\gamma} (\alpha-\gamma)!}{(\beta-\gamma)!((\alpha-\gamma)-(\beta-\gamma))!}.
        \end{align}
        Let $\epsilon = \beta-\gamma$ and reindex the sum, obtaining:
        \[
        c_{\alpha, \gamma} = \frac{(-1)^{\alpha+\gamma}}{(\alpha-\gamma)!} \sum_{\epsilon\leq \alpha-\gamma} (-1)^\epsilon \binom{\alpha-\gamma}{\epsilon}.
        \]
        The alternating sum of binomial coefficients is 0 unless $\alpha=\gamma$, in which case it is 1. Observe then, that $c_{\alpha, \gamma} = 0$ if $\alpha \neq \gamma$ and $c_{\alpha, \alpha} = 1$. Finally, we see the following:
        \[
        b_\alpha = \sum_{\gamma \leq \alpha} c_{\alpha, \gamma} \bdelta^{\alpha-\gamma}(a_\gamma) = c_{\alpha, \alpha} a_\alpha = a_\alpha
        \]
        and $T^*_{\ev_0}(\sum_\alpha a_\alpha \bt^\alpha) = \sum_\alpha a_\alpha \bt^\alpha$, and $T^*_{\ev_0} = \id$, as required.

        \item Let $K$ be an arbitrary constant ring, and let $H$ be the Hurwitz morphism for $(H(K), \bd_K)$ as constructed in Example \ref{hurwitz_morphism_eg}. Then, the map $\ev: H(K) \to K$ given by $\sum_\alpha a_\alpha \bt^\alpha \mapsto a_{\bar0}$ is a evaluation map for $H$. We verify this by direct computation: let $(A, \bdelta)$ be a differential ring, and let $\phi: A \to K$ be a ring homomorphism and let $a \in A$. Then,
        \[
        \ev(H_\phi(a)) = \ev\left(\sum_\alpha \phi(\bdelta^\alpha a) \bt^\alpha \right) = \phi(a).
        \]
        Thus (EV1) holds. Note that (EV1) is also the conclusion of \cite[Proposition 2.1]{Keigher1997}. Now, for (EV2), let $\sum_\alpha a_\alpha \bt^\alpha \in H(K)$. Then, writing $H_{\ev}(\sum_\alpha a_\alpha \bt^\alpha)$ as $\sum_\alpha b_\alpha \bt^\alpha$, we have that for any fixed $\alpha$,
        \[
        b_\alpha = \ev\left(\bd_K^\alpha \left(\sum_\beta a_\beta \bt^\beta \right)\right) = \ev\left(\sum_\beta a_{\beta+\alpha} \bt^\beta \right) = a_\alpha
        \]
        So $H_\ev = \id_{H(K)}$, as required.
    \end{enumerate}
\end{egs}

For the remainder of this section, we let $K$ be a differential ring, $L$ be a differential $K$-algebra and $T$ a $K$-Taylor morphism for $L$.

\begin{lem} \label{ev_commutes_tm}
	 Let $\ev: L \to K$ be a $K$-algebra homomorphism. Then, $T_\ev = \id_L$ if and only if for any differential ring $B$ and differential ring homomorphism $\psi: B \to L$, $T_{\ev\,\circ\,\psi} = \psi$. 
\end{lem}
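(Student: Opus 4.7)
The plan is to show both implications essentially by direct application of axiom (TM2), with no real calculation required.

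For the forward implication, suppose $T_\ev = \id_L$ and let $\psi : B \to L$ be a differential ring homomorphism from an arbitrary differential ring $B$. The composition $\ev \circ \psi : B \to K$ is then a ring homomorphism, and we have the trivially commutative triangle
\[
\begin{tikzcd}
    B \ar[r, "\ev\circ\psi"] \ar[d, "\psi"'] & K \\
    L \ar[ur, "\ev"']
\end{tikzcd}
\]
with $\psi$ differential. Applying (TM2) with $A = B$, the role of $B$ played by $L$, and $\chi = \psi$, yields $T_{\ev \circ \psi} = T_\ev \circ \psi$. Combining with the hypothesis $T_\ev = \id_L$ gives $T_{\ev\circ\psi} = \psi$, as required.

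For the backward implication, the hypothesis applied to the special case $B = L$ and $\psi = \id_L$ (which is a differential ring homomorphism) immediately yields $T_\ev = T_{\ev \circ \id_L} = \id_L$.

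There is no real obstacle; the lemma amounts to observing that $T_\ev = \id_L$ is precisely the instance of the functorial property (TM2) at $\psi = \id_L$, and that this instance already implies the apparently stronger statement for arbitrary differential $\psi$ via (TM2).
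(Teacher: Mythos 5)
Your proof is correct and follows essentially the same route as the paper: the forward direction is the identical application of (TM2) to the triangle $\ev\circ\psi = \ev\circ\psi$, and the backward direction specialises to $\psi = \id_L$ (you in fact streamline this slightly, since $T_{\ev\circ\id_L} = T_\ev$ holds trivially without the paper's extra invocation of (TM2)).
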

\begin{proof}
	For the forward direction, let $T_\ev = \id_L$. Let $B$ be a differential ring, and $\psi: B \to L$ be a differential ring homomorphism. Then, the following commutes:
	\[
	\begin{tikzcd}
	B \ar[r, "\ev \, \circ \, \psi"] \ar[d, "\psi"'] & K\\
	L \ar[ur, "\ev"']
	\end{tikzcd}
	\]
	As $\psi$ was assumed to be differential, we apply (TM2) to obtain that the following also commutes:
	\[
	\begin{tikzcd}
	B \ar[r, "T_{\ev\, \circ\, \psi}"] \ar[d, "\psi"'] & K\\
	L \ar[ur, "T_\ev"']
	\end{tikzcd}
	\]
	By assumption, $T_\ev = \id_L$. Thus, $T_{\ev\, \circ\, \psi} = \id_L\circ \psi = \psi$. 
	
	Conversely, assume that for any differential ring $B$ and differential ring homomorphism $\psi: B \to L$, $T_{\ev\,\circ\,\psi} = \psi$. Then, consider the following commutative diagram:
	\[
	\begin{tikzcd}
	L \ar[r, "\ev\,\circ\,\id_L"] \ar[d, "\id_L"'] & K\\
	L \ar[ur, "\ev"']
	\end{tikzcd}
	\]
	As $\id_L$ is differential, we apply (TM2) to obtain that
	\[
	\begin{tikzcd}
	L \ar[r, "T_{\ev\,\circ\,\id_L}"] \ar[d, "\id_L"'] & L\\
	L \ar[ur, "T_\ev"']
	\end{tikzcd}
	\]
	also commutes. By assumption, $T_{\ev\,\circ\,\id_L} = \id_L$. Thus, $T_\ev \circ \id_L = T_\ev = T_{\ev\,\circ\,\id_L} = \id_L$, as required.
\end{proof}

\begin{lem}
	If $T$ admits an evaluation map, then it is unique.
\end{lem}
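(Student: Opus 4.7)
The plan is to exploit the two axioms (EV1) and (EV2) together against each other. Given two evaluation maps $\ev_1, \ev_2 : L \to K$ for the $K$-Taylor morphism $T$, I want to derive $\ev_1 = \ev_2$ directly, without any further machinery.

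First I would note that $L$ itself is a differential ring and $\ev_1 : L \to K$ is a ring homomorphism, so it is a legitimate input to $T$. Applying (EV1) to the pair $(L, \ev_1)$ with the evaluation map $\ev_2$ gives
\[
\ev_2 \circ T_{\ev_1} = \ev_1.
\]
Next, (EV2) applied to $\ev_1$ yields $T_{\ev_1} = \id_L$. Substituting this into the previous equation gives $\ev_2 = \ev_1$, which is the desired uniqueness. (The symmetric argument works equally well: (EV1) for $\ev_1$ applied to the pair $(L, \ev_2)$ combined with (EV2) for $\ev_2$ gives the same conclusion.)

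There is essentially no obstacle here; the whole argument is a one-line consequence of the two axioms fitting together. The only thing to verify is that $\ev_1$ (being a $K$-algebra homomorphism from $L$) is an admissible input to $T$ as a ring homomorphism, which is immediate since $L$ is a differential ring and (EV1) requires only that the input be a ring homomorphism into $K$.
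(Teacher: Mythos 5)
Your argument is correct and is exactly the paper's proof: apply (EV1) for one evaluation map to the other one as a ring homomorphism $L \to K$, then use (EV2) for the latter to reduce $T_{\ev_1}$ to $\id_L$. Nothing further is needed.
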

\begin{proof}
	Suppose both $\ev$ and $\ev'$ are evaluation maps for $T$. Then, by (EV1) on $\ev$, we have that $\ev\circ T_{\ev'} = \ev'$. By (EV2) on $\ev'$, we have that $T_{\ev'} = \id_L$. Thus, 
	\[
	\ev' = \ev\circ T_{\ev'} = \ev\circ\id_L = \ev
	\]
	as required.
\end{proof}

\begin{prop}
    Suppose $T$ admits an evaluation map $\ev$. Define the functor
    \[
    \ev \circ - : (\id_\DRing \downarrow 1_L) \to (U\downarrow 1_K)
    \]
    by setting
    \[
    (\ev \circ -)(A, \bullet, \phi) = (A, \bullet, \ev \circ \phi)
    \]
    on objects, and $(\ev \circ -)$ to be the identity on morphisms. Then, $(\ev \circ -)$ and $T$ are inverse functors, and $(U \downarrow 1_K)$ and $(\id_\DRing \downarrow 1_L)$ are isomorphic as categories.
\end{prop}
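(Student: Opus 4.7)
The plan is to verify that $(\ev \circ -)$ is a well-defined functor, and then check that the two composites $T \circ (\ev \circ -)$ and $(\ev \circ -) \circ T$ are the identity functors on their respective categories. Since both functors act as the identity on morphisms, the functoriality of $(\ev \circ -)$ reduces to checking that objects are sent to objects and that the commutativity constraint defining morphisms in the comma category is preserved. For the former, given $(A, \bullet, \phi)$ with $\phi: A \to L$ a differential ring homomorphism, $\ev \circ \phi: A \to K$ is a ring homomorphism, so $(A, \bullet, \ev \circ \phi)$ is an object of $(U \downarrow 1_K)$. For the latter, given a morphism $(\chi, \id): (A_1, \bullet, \phi_1) \to (A_2, \bullet, \phi_2)$ in $(\id_\DRing \downarrow 1_L)$, we have $\phi_1 = \phi_2 \circ \chi$, and composing with $\ev$ gives $\ev \circ \phi_1 = \ev \circ \phi_2 \circ \chi$, so $(\chi, \id)$ is indeed a morphism in the target category. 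Preservation of identities and composition is then automatic.

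Next I would verify $(\ev \circ -) \circ T = \id_{(U \downarrow 1_K)}$. On an object $(A, \bullet, \phi)$, the composite sends it to $(A, \bullet, \ev \circ T_\phi)$, and by (EV1), $\ev \circ T_\phi = \phi$. On morphisms the composite is identity by construction.

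For the other direction $T \circ (\ev \circ -) = \id_{(\id_\DRing \downarrow 1_L)}$, an object $(A, \bullet, \phi)$ with $\phi: A \to L$ differential is sent to $(A, \bullet, T_{\ev \circ \phi})$, and we need $T_{\ev \circ \phi} = \phi$. This is exactly the content of Lemma \ref{ev_commutes_tm} applied with $B = A$ and $\psi = \phi$: since $T$ admits $\ev$ as an evaluation map, (EV2) gives $T_\ev = \id_L$, and the lemma then yields $T_{\ev \circ \phi} = \phi$ for every differential ring homomorphism $\phi: A \to L$. Again, morphisms are preserved trivially.

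Having established that both composites are the identity, we conclude that $T$ and $(\ev \circ -)$ are mutually inverse functors, exhibiting an isomorphism of categories $(U \downarrow 1_K) \cong (\id_\DRing \downarrow 1_L)$. No step is expected to pose a real obstacle; the only substantive input is Lemma \ref{ev_commutes_tm}, which has already done the work of translating (EV2) into the statement $T_{\ev \circ \psi} = \psi$ for arbitrary differential $\psi$ into $L$.
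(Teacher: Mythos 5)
Your proof is correct and follows essentially the same route as the paper: functoriality of $\ev \circ -$ is checked by composing with $\ev$ on the commutativity constraint, one composite is handled by (EV1), and the other by Lemma \ref{ev_commutes_tm} together with (EV2). Nothing further is needed.
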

\begin{proof}
    We begin by verifying that $(\ev \circ -)$ is a functor. We check that if $(\alpha, \id): (A, \bullet, \phi) \to (B, \bullet, \psi)$ is a morphism in $(\id_\DRing \downarrow 1_L)$, then it is also a morphism $(A, \bullet, \ev \circ \phi) \to (B, \bullet, \ev\circ \psi)$ in $(U \downarrow 1_K)$.
    Consider the following diagram:
    \[
    \begin{tikzcd}
        A \ar[r, "\phi"] \ar[d, "\alpha"] & L \ar[d, "\id_L"] \ar[r, "\ev"] & K \ar[d, "\id_K"] \\
        B \ar[r, "\psi"] & L \ar[r, "\ev"] & K 
    \end{tikzcd}
    \]
    The left square commutes by definition of morphisms in $(\id_\DRing \downarrow 1_L)$. The right square is clearly commutative also. Thus, the large square commutes (considered as a diagram in $\Ring$), and $(\alpha, \id)$ is a morphism $(A, \bullet, \ev \circ \phi) \to (B, \bullet, \ev\circ \psi)$ in $(U \downarrow 1_K)$. Clearly $(\ev \circ -)$ preserves composition.

    Now, let $(A, \bullet, \phi)$ be an object in $(\id_\DRing \downarrow 1_L)$. Then, 
    \[
    T(\ev \circ -)(A, \bullet, \phi) = (A, \bullet, T_{\ev\circ\phi}) = (A, \bullet, \phi)
    \]
    by Lemma \ref{ev_commutes_tm}. Conversely, let $(B, \bullet, \psi)$ be an object in $(U \downarrow 1_K)$. Then,
    \[
    (\ev \circ -)T(B, \bullet, \psi) = (B, \bullet, \ev\circ T_\psi) = (B, \bullet, \psi)
    \]
    by (EV1). As both $T$ and $(\ev \circ -)$ are the identity on morphisms, we obtain the desired result.
\end{proof}

\begin{prop} \label{utm_admits_eval_map_prop}
    Let $K$ be a differential ring, and let $T^K$ be the universal $K$-Taylor morphism for $K^*$. Then $T^K$ admits an evaluation map $\ev_K: K^* \to K$.
\end{prop}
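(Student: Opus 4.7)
The plan is to construct $\ev_K$ by exploiting the universal property of $K^*$ as a colimit, but viewed in $K\Alg$ rather than $K\DAlg$. The crucial observation is that by the lemma after Proposition \ref{cat_of_diff_K_algs_is_cocomplete_prop}, the forgetful functor $U: K\DAlg \to K\Alg$ is cocontinuous, so $K^*$ is simultaneously the colimit of $U \circ D$ in $K\Alg$. This lets us build maps \emph{out of} $K^*$ as a $K$-algebra using only $K$-algebra homomorphisms from the $A^\phi$'s.

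First I would verify that the family $\{\phi: A^\phi \to K\}_{(A^\phi, \bullet, \phi) \in \mathcal{D}_K}$ is a cocone of $U \circ D$ in $K\Alg$. This is immediate from the definition of morphisms in the comma category $(U \downarrow 1_K)$: any morphism $(\chi, \id): (A^\phi, \bullet, \phi) \to (B^\psi, \bullet, \psi)$ satisfies $\psi \circ \chi = \phi$ by definition. The universal property then gives a unique $K$-algebra homomorphism $\ev_K: K^* \to K$ such that $\ev_K \circ T^K_\phi = \phi$ for every $(A^\phi, \phi) \in \mathcal{D}_K$.

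Next I would check (EV1) for all differential rings. For differentially finitely generated $K$-algebras this is the content of the previous paragraph. Extension to arbitrary differential $K$-algebras proceeds exactly as in Proposition \ref{ftm_implies_rtm}, since $T^K_\phi$ is determined by its restrictions to differentially finitely generated $K$-subalgebras; and extension to arbitrary differential rings follows the tensor product construction of Lemma \ref{rtm_extends_to_all_rings}, where for $\phi: R \to K$ with $R$ a differential ring we use $T^K_\phi = T^K_{\phi \cdot \id_K} \circ \iota_R$ together with (EV1) applied to $\phi \cdot \id_K: R \otimes_\Z K \to K$.

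For (EV2), I would apply axiom (TM2) to the commutative triangle given by (EV1):
\[
\begin{tikzcd}
A \ar[r, "\phi"] \ar[d, "T^K_\phi"'] & K \\
K^* \ar[ur, "\ev_K"']
\end{tikzcd}
\]
Since $T^K_\phi$ is differential, (TM2) yields $T^K_\phi = T^K_{\ev_K} \circ T^K_\phi$ for every $(A, \phi) \in \mathcal{D}_K$. Thus $T^K_{\ev_K}$ and $\id_{K^*}$, both being differential $K$-algebra endomorphisms of $K^*$, agree after precomposition with every component of the colimit cocone. By the uniqueness clause in the universal property of $K^*$ as a colimit in $K\DAlg$, they must coincide, giving $T^K_{\ev_K} = \id_{K^*}$. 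The main conceptual step is the very first one — recognising that cocontinuity of $U$ allows us to factor a collection of merely $K$-algebraic maps through $K^*$; once this is in place, everything else is routine bookkeeping using results already established.
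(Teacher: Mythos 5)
Your proof is correct and takes essentially the same route as the paper: both obtain $\ev_K$ from the cocontinuity of the forgetful functor (so that $K^*$ is also the colimit of $U\circ D$ in $K\Alg$), verify (EV1) first on the diagram and then extend via the machinery of Section \ref{restrictions_section}, and deduce (EV2) by applying (TM2) to the (EV1) triangle and invoking uniqueness of cocone morphisms out of the colimit. The only difference is presentational: the paper packages the family $\{\phi\}$ as the colimit of a diagram in the slice category $K\Alg/K$ and then identifies its underlying object with $K^*$, whereas you factor that cocone through $K^*$ directly, which is the same construction stated without the slice-category detour.
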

\begin{proof}
    Let $F: \mathcal{D}_K \to K\Alg/K = (\id_{K\Alg} \downarrow 1_K)$ be the `forgetful functor', i.e. $U$ maps an object $(A, \bullet, \phi)$ in $\mathcal{D}_K$ to $(U(A^\phi), \phi)$. 
    By Theorem \ref{certain_comma_category_is_cocomplete_thm}, since the categories $K\Alg$ and $\mathbf{1}$ are cocomplete, and the identity is clearly cocontinuous, we have that $K\Alg/K$ is also cocomplete. Let the colimit of $F$ be $(L, \ev_K)$, and $\tau_{A^\phi}: (U(A^\phi), \phi) \to (L, \ev_K)$ be the component of the colimiting cocone at $(U(A^\phi), \phi)$.

    We claim that $L$ is precisely $K^*$, and $\tau_{A^\phi} = T^K_\phi$ for each $A^\phi$.
    By Theorem \ref{certain_comma_category_is_cocomplete_thm} the domain functor $K\Alg/K \to K\Alg$ is cocontinuous, and colimit of $DU$ is $L$, with the component at $A^\phi$ precisely $\tau_{A^\phi}$.

    By construction, the colimit of the diagram $D: \mathcal{D}_K \to K\DAlg$ is $K^*$, with components $T^K_\phi: A_\phi \to K^*$. 
    Since the forgetful functor $U: K\DAlg \to K\Alg$ is cocontinuous, we have that the colimit of $UD: \mathcal{D}_K \to K\Alg$ is also $K^*$, with component $T^K_\phi$ at $A^\phi$. 
    Observe that $UD = DF$, and we conclude that the colimiting cocones $(L, \tau)$ and $(K^*, T^K)$ are isomorphic.

    We claim that $\ev_K$ is an evaluation map for $T^K$. We begin by showing (EV1) in the special case of differential $K$-algebras and $K$-algebra homomorphisms. For any $(A^\phi, \bullet, \phi)$ in $\mathcal{D}_K$, we observe that the diagram
    \[
    \begin{tikzcd}
        A \ar[r, "T^K_\phi"] \ar[d, "\phi"'] & K^* \ar[dl, "\ev_K"] \\
        K
    \end{tikzcd}
    \]
    commutes, by definition of morphisms in the slice category $K\Alg/K$. 
    After factoring through an appropriate isomorphism if necessary, we see that for any differentially finitely generated differential $K$-algebra $A$, and $K$-algebra homomorphism $\phi: A \to K$, we have that $\ev_K \circ T^K_\phi = \phi$. 
    By taking appropriate restrictions to differentially finitely generated subalgebras, this also holds for any differential $K$-algebra.

    Now, we extend the result to all differential rings. 
    From the proof of Lemma \ref{rtm_extends_to_all_rings}, we may recover the Taylor morphism $T^K$ from its restriction to differential $K$-algebras. Explicitly, for $R$ a differential ring, and $\phi: R \to K$ be a differential ring homomorphism,
    we have that
    $T^K_\phi = T^K_{\phi \cdot \id_K} \circ \iota_R$, where $\iota_R: R \to R \otimes_\Z K$ is the natural inclusion, and $\phi \cdot \id_K: R \otimes_\Z K \to K$ is the map given by the product of $\phi$ and $\id_K$.
    Then,
    \[
    \ev_K \circ T^K_\phi = \ev_K \circ T^K_{\phi \circ \id_K} \circ \iota_R = (\phi \cdot \id_K) \circ \iota_R = \phi
    \]
    as required.

    For (EV2), let $A$ be a differentially finitely generated $K$-algebra, and let $\phi: A \to K$ be a $K$-point. By (EV1), the triangle
    \[
    \begin{tikzcd}
        A \ar[r, "\phi"] \ar[d, "T^K_\phi"'] & K \\
        K^* \ar[ur, "\ev_K"']
    \end{tikzcd}
    \]
    commutes. Then, applying $T^K$, we have that
    \[
    \begin{tikzcd}
        A \ar[r, "T^K_\phi"] \ar[d, "T^K_\phi"'] & K \\
        K^* \ar[ur, "T^K_{\ev_K}"']
    \end{tikzcd}
    \]
    also commutes. This implies that $T^K_{\ev_K}$ is an automorphism of the colimiting cocone $(K^*, T^K)$ of the diagram $D: \mathcal{D}_K \to K\DAlg$. By the universal property of colimits, we obtain that $T^K_{\ev_K} = \id_{K^*}$, as required.
\end{proof}

\begin{lem} \label{tm_evalmap_is_morphism_of_tms_lem}
    Let $T, S$ be $K$-Taylor morphisms for $L, F$, respectively. Suppose $T$ admits an evaluation map $\ev: L \to K$. Then, $S_\ev: L \to F$ is a morphism of $K$-Taylor morphisms $T \to S$. Further, if $\theta: T \to S$ is any other morphism, then $\theta = S_\ev$.
\end{lem}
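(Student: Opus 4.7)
The plan is to observe that both claims follow quickly from the two axioms (TM2) and (EV2), so the proof will be almost purely formal. First I will check that $S_\ev : L \to F$ is indeed a legal morphism of $K$-Taylor morphisms (noting that $S_\ev$ is automatically a differential $K$-algebra homomorphism by Proposition \ref{k_tm_preserves_k_algs_prop}, since $\ev$ is a $K$-algebra homomorphism and $L$ is a differential $K$-algebra). Then I will verify the two claims in turn.

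For the first claim, I need to show that for every differential ring $A$ and ring homomorphism $\phi : A \to K$, one has $S_\phi = S_\ev \circ T_\phi$. The key input is (EV1) applied to $T$, which gives $\ev \circ T_\phi = \phi$. Since $T_\phi : A \to L$ is a differential ring homomorphism, the triangle
\[
\begin{tikzcd}
    A \ar[r, "\phi"] \ar[d, "T_\phi"'] & K \\
    L \ar[ur, "\ev"']
\end{tikzcd}
\]
commutes with $T_\phi$ differential, and I may apply (TM2) for the Taylor morphism $S$ to conclude $S_\phi = S_\ev \circ T_\phi$, which is exactly the defining condition for $S_\ev$ to be a morphism $T \to S$.

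For the uniqueness claim, suppose $\theta : T \to S$ is any morphism of $K$-Taylor morphisms. By the definition of such a morphism, $S_\phi = \theta \circ T_\phi$ for every differential ring $A$ and ring homomorphism $\phi : A \to K$. I specialise to $A = L$ and $\phi = \ev : L \to K$ to obtain
\[
S_\ev = \theta \circ T_\ev = \theta \circ \id_L = \theta,
\]
where the second equality is (EV2). This forces $\theta = S_\ev$.

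There is no genuine obstacle here; everything is a direct bookkeeping application of the axioms. The only subtlety worth flagging is making sure $S_\ev$ qualifies as a morphism of $K$-Taylor morphisms in the first place (i.e., that it is a $K$-algebra homomorphism), which is handled by Proposition \ref{k_tm_preserves_k_algs_prop}.
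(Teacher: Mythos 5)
Your proof is correct and follows essentially the same route as the paper: (EV1) plus (TM2) applied to $S$ for the first claim, and specialising the morphism condition to $\ev$ together with (EV2) for uniqueness. The extra remark that $S_\ev$ is a $K$-algebra homomorphism (via Proposition \ref{k_tm_preserves_k_algs_prop}) is a small tidiness the paper leaves implicit, but it does not change the argument.
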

\begin{proof}
    Let $A$ be a differential ring, and $\phi: A \to K$ be a differential ring homomorphism. By (EV1), $\ev \circ T_\phi = \phi$. That is, the following triangle commutes:
    \[
    \begin{tikzcd}
        A \ar[r, "\phi"] \ar[d, "T_\phi"'] & K \\
        L \ar[ur, "\ev"']
    \end{tikzcd}
    \]
    Applying $S$ and (TM2), we obtain that
    \[
    \begin{tikzcd}
        A \ar[r, "S_\phi"] \ar[d, "T_\phi"'] & F \\
        L \ar[ur, "S_\ev"']
    \end{tikzcd}
    \]
    also commutes, and thus $S_\ev \circ T_\phi = S_\phi$, as required.

    For uniqueness, let $\theta: T \to S$ be any other morphism of $K$-Taylor morphisms. Then, by definition, for any differential ring $A$ and ring homomorphism $\phi: A \to K$, we have that $S_\phi = \theta \circ T_\phi$. In particular, applying this to $\ev: L \to K$, we have that
    \[
    S_\ev = \theta \circ T_\ev = \theta \circ \id_L = \theta
    \]
    by (EV2), as required.
\end{proof}

In other words, we have that the existence of an evaluation map characterises the universal $K$-Taylor morphism:

\begin{thm} \label{universal_tm_char_by_ev_thm}
    Let $K$ be a differential ring, and $T$ a $K$-Taylor morphism. Then, there exists a unique isomorphism $T \to T^K$ if and only if $T$ admits an evaluation map.
\end{thm}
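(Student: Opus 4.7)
The plan is to prove both directions using the universal property of $T^K$ together with Proposition \ref{utm_admits_eval_map_prop} and Lemma \ref{tm_evalmap_is_morphism_of_tms_lem}.

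For the forward direction, suppose $\theta: T^K \to T$ is an isomorphism of $K$-Taylor morphisms, which gives a differential $K$-algebra isomorphism $\theta: K^* \to L$ with $T_\phi = \theta \circ T^K_\phi$ for all $(A,\phi)$. By Proposition \ref{utm_admits_eval_map_prop}, $T^K$ admits the evaluation map $\ev_K: K^* \to K$, and I would define a candidate evaluation map for $T$ by
\[
\ev_L \coloneqq \ev_K \circ \theta^{-1}: L \to K.
\]
Axiom (EV1) is immediate since $\ev_L \circ T_\phi = \ev_K \circ \theta^{-1} \circ \theta \circ T^K_\phi = \ev_K \circ T^K_\phi = \phi$. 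For (EV2), since $\theta^{-1}$ is a differential ring homomorphism making the triangle formed by $\ev_L$, $\theta^{-1}$, and $\ev_K$ commute, (TM2) applied to $T$ gives $T_{\ev_L} = T_{\ev_K} \circ \theta^{-1}$. Using that $\theta$ is a morphism of Taylor morphisms, $T_{\ev_K} = \theta \circ T^K_{\ev_K} = \theta \circ \id_{K^*} = \theta$ by (EV2) for $T^K$, so $T_{\ev_L} = \theta \circ \theta^{-1} = \id_L$.

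For the converse, suppose $T$ admits an evaluation map $\ev: L \to K$. Since $T^K$ is the initial object in the category of $K$-Taylor morphisms (Theorem \ref{universal_TM_construction_thm}), there exists a unique morphism of $K$-Taylor morphisms $\theta: T^K \to T$. Applying Lemma \ref{tm_evalmap_is_morphism_of_tms_lem} to the pair $(T, T^K)$ with $T$ admitting $\ev$, the map $T^K_\ev: L \to K^*$ is a morphism of $K$-Taylor morphisms $T \to T^K$. I would then verify that $\theta$ and $T^K_\ev$ are mutually inverse using two uniqueness arguments: the composition $\theta \circ T^K_\ev: T \to T$ is a morphism of $K$-Taylor morphisms, and by Lemma \ref{tm_evalmap_is_morphism_of_tms_lem} (applied to the pair $(T, T)$ with evaluation map $\ev$), any such morphism equals $T_\ev$, which is $\id_L$ by (EV2); conversely, $T^K_\ev \circ \theta: T^K \to T^K$ must equal $\id_{K^*}$ by the initiality of $T^K$. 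Hence $\theta$ is an isomorphism, and uniqueness of the isomorphism is automatic from $T^K$ being initial.

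The main obstacle is essentially conceptual rather than computational: one must correctly juggle which Taylor morphism is being applied to which evaluation-style map, and identify when a given composite lands in the hom-set where a uniqueness assertion (either from initiality or from Lemma \ref{tm_evalmap_is_morphism_of_tms_lem}) applies. Once this bookkeeping is set up, no calculations beyond diagram chases are required.
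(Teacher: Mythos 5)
Your proposal is correct and follows essentially the same route as the paper, which proves the theorem by combining Proposition \ref{utm_admits_eval_map_prop} (the universal Taylor morphism admits an evaluation map, transported through the isomorphism for one direction) with Lemma \ref{tm_evalmap_is_morphism_of_tms_lem} and initiality of $T^K$ (for the other). You have merely written out the diagram chases and uniqueness arguments that the paper's two-line proof leaves implicit, and these details are all accurate.
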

\begin{proof}
    The forward direction is Lemma \ref{tm_evalmap_is_morphism_of_tms_lem}, and the backwards direction is Proposition \ref{utm_admits_eval_map_prop}.
\end{proof}

From the examples in \ref{eval_map_egs}, we obtain the following concrete descriptions of the universal Taylor morphism in certain cases:

\begin{cor} \label{utm_for_Q_algs_and_constants_cor}
    The universal Taylor morphism for differential $\Q$-algebras and constant rings are precisely the twisted Taylor morphism and Hurwitz morphism, respectively.
\end{cor}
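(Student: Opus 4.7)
The proof is essentially immediate from the machinery already established, so the plan is simply to cite the relevant pieces in the right order. The key characterisation to invoke is Theorem \ref{universal_tm_char_by_ev_thm}, which says that a $K$-Taylor morphism is (uniquely) isomorphic to the universal one if and only if it admits an evaluation map. So the whole task reduces to pointing at the explicit evaluation maps already constructed in Examples \ref{eval_map_egs}.

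My plan is to split the statement into its two cases and handle each by direct appeal. First, let $(K, \bdelta)$ be a differential $\Q$-algebra. By part (2) of Examples \ref{eval_map_egs}, the twisted Taylor morphism $T^*$ for $(K[[\bt]], \bdelta + \ddbt)$ admits the evaluation-at-$0$ map $\ev_0 : K[[\bt]] \to K$ as an evaluation map. Applying Theorem \ref{universal_tm_char_by_ev_thm}, $T^*$ is uniquely isomorphic to the universal $K$-Taylor morphism $T^K$. Second, let $K$ be an arbitrary (constant) ring. By part (3) of Examples \ref{eval_map_egs}, the Hurwitz morphism $H$ for $(H(K), \bd_K)$ admits the coefficient-extraction map $\ev : H(K) \to K$ as an evaluation map. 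Applying Theorem \ref{universal_tm_char_by_ev_thm} again, $H$ is uniquely isomorphic to the universal $(K, \bzero)$-Taylor morphism.

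There is no real obstacle in this corollary; the technical work was already discharged in verifying (EV1) and (EV2) inside Examples \ref{eval_map_egs}, and in establishing the characterisation Theorem \ref{universal_tm_char_by_ev_thm}. The only thing to be mildly careful about is the uniqueness qualifier: the isomorphism in each case is the one furnished by Lemma \ref{tm_evalmap_is_morphism_of_tms_lem}, namely $T^K_{\ev_0}$ and $T^K_{\ev}$ respectively, so strictly speaking the statement should be read up to unique isomorphism of $K$-Taylor morphisms, which is the standard convention for universal objects. I would simply cite the theorem and the relevant example and leave it at that.
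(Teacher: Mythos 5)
Your proposal is correct and follows exactly the route the paper intends: the corollary is stated immediately after the sentence ``From the examples in \ref{eval_map_egs}, we obtain\dots'', so the intended argument is precisely to combine the evaluation maps exhibited in Examples \ref{eval_map_egs}(2) and (3) with the characterisation in Theorem \ref{universal_tm_char_by_ev_thm}. Your remark about the uniqueness of the isomorphism via Lemma \ref{tm_evalmap_is_morphism_of_tms_lem} is an accurate reading of what ``precisely'' means here.
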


\subsection{Differentially Algebraic Power Series}

For the remainder of this section, we let $(K, \delta)$ be a differential field of characteristic zero in one derivation, which is large as a field. Let $K[[t]]_\dalg$ denote the differential subring of $(K[[t]],  \delta + \ddt)$ which consists of elements differentially algebraic over $(K, \delta)$. We will also write $\delta + \ddt$ for the restriction of the derivation to $K[[t]]_\dalg$.

In \cite[Proposition 3.3]{SanchezTressl2023}, Le\'on S\'anchez and Tressl prove the following using an approximation result: 
suppose that $A$ is a differentially finitely generated $K$-algebra, and suppose that there is a $K$-algebra homomorphism $A \to L$, where $L/K$ is a field extension such that $K$ is existentially closed in $L$. 
Then, there is a differential $K$-algebra homomorphism $A \to (K[[t]]_\dalg, \delta+\ddt)$.

In particular, if there is a $K$-algebra homomorphism $A \to K$, then there is a differential $K$-algebra homomorphism $A \to K[[t]]_\dalg$. It is then natural to ask whether $(K[[t]]_\dalg, \delta + \ddt)$ admits a $K$-Taylor morphism.

\begin{prop} \label{diff_alg_power_series_no_tm_prop}
    Let $(K, \delta)$ be a differential field of characteristic zero in one derivation. Then, $(K[[t]]_\dalg, \delta + \ddt)$ admits a $K$-Taylor morphism if and only if $K[[t]]_\dalg = K[[t]]$.
\end{prop}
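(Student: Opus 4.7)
The forward direction is immediate: if $K[[t]]_\dalg = K[[t]]$, then the twisted Taylor morphism of Example \ref{ttm_eg} is itself a $K$-Taylor morphism for $(K[[t]]_\dalg, \delta + \ddt)$. The content is in the converse. My plan is to suppose that a $K$-Taylor morphism $T$ for $L = (K[[t]]_\dalg, \delta + \ddt)$ exists, apply Theorem \ref{universal_TM_construction_thm} together with Corollary \ref{utm_for_Q_algs_and_constants_cor} to obtain a differential $K$-algebra homomorphism $\theta : (K[[t]], \delta + \ddt) \to (K[[t]]_\dalg, \delta + \ddt)$, and then show that $\theta$ is (the co-restriction of) the identity, which immediately forces $K[[t]] \subseteq K[[t]]_\dalg$.

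The first and most substantial step will be to pin down $\theta(t)$. I would first observe that $K[[t]]_\dalg$ is local with maximal ideal $t K[[t]]_\dalg$: the differentially algebraic elements of the differential field $K((t))$ over $K$ form a differential subfield $F$, so $K[[t]]_\dalg = F \cap K[[t]]$ is closed under inversion of elements with nonzero constant term. Since $1 - ct$ is a unit in $K[[t]]$ for each $c \in K^\times$, its image $1 - c\theta(t)$ must be a unit in $K[[t]]_\dalg$, forcing $\ev_0(\theta(t)) \ne 1/c$ for every $c \in K^\times$; hence $\ev_0(\theta(t)) = 0$, i.e. $\theta(t) \in t K[[t]]_\dalg$. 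Next, since $\theta$ is differential, $(\delta + \ddt)(\theta(t)) = \theta(1) = 1$. Writing $\theta(t) = \sum_{i \geq 1} a_i t^i$ and equating coefficients yields the recursion $a_1 = 1$ and $a_{i+1} = -\delta(a_i)/(i+1)$ for $i \geq 1$, which solves to $a_1 = 1$ and $a_i = 0$ for $i \geq 2$; thus $\theta(t) = t$.

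The final step is a $t$-adic approximation. For any $f = \sum_i a_i t^i \in K[[t]]$ and any $n \geq 1$, write $f = p_n + t^n q_n$ with $p_n = \sum_{i < n} a_i t^i \in K[t]$ and $q_n \in K[[t]]$. Since $\theta$ fixes $K[t]$ and $\theta(t) = t$, we get $\theta(f) = p_n + t^n \theta(q_n)$, so $\theta(f) - f \in t^n K[[t]]$ for all $n$. As $\bigcap_n t^n K[[t]] = 0$, this gives $\theta(f) = f$ inside $K[[t]]$; but $\theta(f) \in K[[t]]_\dalg$, so $f \in K[[t]]_\dalg$. The main obstacle is the first step: establishing locality of $K[[t]]_\dalg$ and then exploiting it via the units $1 - ct$ to constrain the constant term of $\theta(t)$. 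Once $\theta(t) = t$ is in hand the rest is formal.
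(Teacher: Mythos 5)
Your proof is correct, and its first step --- invoking Theorem \ref{universal_TM_construction_thm} together with Corollary \ref{utm_for_Q_algs_and_constants_cor} to produce a differential $K$-algebra homomorphism $\theta : (K[[t]], \delta + \ddt) \to (K[[t]]_\dalg, \delta + \ddt)$ --- is exactly how the paper begins. From there the two arguments genuinely diverge. The paper makes no attempt to identify $\theta$: it observes that the only ideals of $K[[t]]$ are the $t^nK[[t]]$ and none of these is closed under $\delta + \ddt$ for $n \geq 1$, so $\theta$ is injective; an injective differential $K$-algebra homomorphism then transports any relation $f(\theta(a)) = 0$ with $f \in K\{x\}$ nonzero back to $f(a) = 0$, so a differentially transcendental $a \in K[[t]]$ cannot land in $K[[t]]_\dalg$, a contradiction. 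You instead pin $\theta$ down completely: locality of $K[[t]]_\dalg$ forces $\ev_0(\theta(t)) = 0$, the equation $(\delta + \ddt)(\theta(t)) = 1$ and the resulting recursion force $\theta(t) = t$, and $t$-adic approximation forces $\theta = \id$, whence $K[[t]] \subseteq K[[t]]_\dalg$. Your route is longer but proves more --- the comparison map from the universal Taylor morphism is the identity inclusion --- while the paper's route is shorter and avoids both the locality of $K[[t]]_\dalg$ and the coefficient computation. One simplification available to you: $\ev_0 \circ \theta : K[[t]] \to K$ is a surjective $K$-algebra homomorphism, so its kernel is the unique maximal ideal $(t)$ of the local ring $K[[t]]$, which gives $\ev_0(\theta(t)) = 0$ directly, without the units $1 - ct$ or the locality of $K[[t]]_\dalg$.
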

\begin{proof}
    The reverse direction is clear. For the forward direction, suppose that $(K[[t]]_\dalg, \delta + \ddt)$ admits a $K$-Taylor morphism $T$. 
    Then, there is a unique morphism of $K$-Taylor morphisms $\theta: T^* \to T$, where $T^*$ is the twisted Taylor morphism for $(K[[t]], \delta + \ddt)$. In particular, $\theta: K[[t]] \to K[[t]]_\dalg$ is a differential $K$-algebra homomorphism.

    Observe that the only ideals of $K[[t]]$ are of the form $t^nK[[t]]$ for some $n \in \N$, thus $(K[[t]], \delta + \ddt)$ has no proper differential ideals, so $\theta$ is injective. Suppose that there is $a \in K[[t]]$ differentially transcendental over $K$. As $\theta(a)$ is differentially algebraic over $K$, there is a nonzero differential polynomial $f$ over $K$ such that $f(\theta(a)) = 0$. Since $\theta$ is a $K$-algebra homomorphism, and $\theta: K[[t]] \to \theta(K[[t]]) \subseteq K[[t]]_\dalg$ is an isomorphism, we also have that $f(a) = 0$, a contradiction.
\end{proof}

\begin{rmk}
    It is not in general known whether there exist elements in $(K[[t]], \delta + \ddt)$ differentially transcendental over $(K, \delta)$.
\end{rmk}

\section{An Adjunction}

In \cite{Keigher1975}, Keigher shows that the map $H: \Ring \to \DRing$ which sends a ring $A$ to its ring of Hurwitz series $H(A)$ (in a single variable $t$), and sends morphisms $\phi: A \to B$ to
\begin{align}
H(\phi) : H(A) &\to H(B) \\
\sum a_i t^i &\mapsto \sum \phi(a_i) t^i
\end{align}
is a functor, and is the right adjoint to the forgetful functor $U: \DRing \to \Ring$. 

A related result appears in \cite{Umemura1996} for the case of differential $\Q$-algebras in one derivation. Umemura shows that for a differential $\Q$-algebra $A$ and a $\Q$-algebra $B$, there is a bijection
\[
\Phi: \Ring(UA, B) \to \DRing(A, B[[t]])
\]
which sends a morphism $\phi: UA \to B$ to $T_\phi: A \to (B[[t]], \ddt)$, where $T$ is the classical Taylor morphism.

In this section, we will show that the universal Taylor morphism constructed in Section \ref{utm_section} has a similar adjunction property.

\begin{defn}
    The category $\DRing_\Ring$ is the category whose objects are differential rings, and morphisms $(A, \bdelta) \to (B, \bpartial)$ are ring homomorphisms $A \to B$. Write $I: \DRing \to \DRing_\Ring$ for the natural inclusion.
\end{defn}

\begin{lem} \label{UTM_is_functor_lemma}
    Let $K, L$ be differential rings, and let $\theta: K \to L$ be a ring homomorphism. There exists a unique differential ring homomorphism $\theta^*: K^* \to L^*$ such that for any differential ring $A$ and ring homomorphism $\phi: A \to K$, we have that $T^L_{\theta \circ \phi} = \theta^* \circ T^K_\phi$. 
    
    Further, the map $F: \DRing_\Ring \to \DRing$ which sends objects by $K \mapsto K^*$ and morphisms by $(\theta: K \to L) \mapsto (\theta^*: K^* \to L^*)$ is a functor.
\end{lem}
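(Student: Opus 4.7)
The plan is to exhibit $\theta^*$ as the unique morphism arising from the universal property of $T^K$ (Theorem \ref{universal_TM_construction_thm}) applied to a suitable $K$-Taylor morphism valued in $L^*$. The key observation is that although $L^*$ is not a priori a differential $K$-algebra (since $\theta$ need not be differential), it acquires such a structure from $T^L$ itself: the map $T^L_\theta: K \to L^*$ is a differential ring homomorphism by the very definition of an $L$-Taylor morphism applied to the ring homomorphism $\theta: K \to L$.

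Equip $L^*$ with this $K$-algebra structure $\eta_{L^*} := T^L_\theta$, and define $S_\phi := T^L_{\theta \circ \phi} : A \to L^*$ for each pair $(A, \phi)$ with $\phi: A \to K$ a ring homomorphism. I would then verify that $S$ is a $K$-Taylor morphism for $L^*$. Axiom (TM2) is immediate from (TM2) for $T^L$: if $\phi = \psi \circ \chi$ with $\chi$ differential, then $\theta \circ \phi = (\theta \circ \psi) \circ \chi$, so $T^L_{\theta \circ \phi} = T^L_{\theta \circ \psi} \circ \chi$. For (TM1), suppose $\phi: A \to K$ is a differential ring homomorphism; then applying (TM2) for $T^L$ to the commutative triangle $A \xrightarrow{\phi} K \xrightarrow{\theta} L$, in which $\phi$ is differential, yields $S_\phi = T^L_{\theta \circ \phi} = T^L_\theta \circ \phi = \eta_{L^*} \circ \phi$, as required.

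Applying Theorem \ref{universal_TM_construction_thm} to $S$ then produces a unique differential $K$-algebra homomorphism $\theta^*: K^* \to L^*$ with $S_\phi = \theta^* \circ T^K_\phi$, which is exactly the identity $T^L_{\theta \circ \phi} = \theta^* \circ T^K_\phi$ in the statement. For uniqueness as a differential ring homomorphism (rather than merely as a differential $K$-algebra homomorphism), note that any differential ring homomorphism $\theta^*: K^* \to L^*$ satisfying the required identity, specialised to $\phi = \id_K$, gives $\theta^* \circ T^K_{\id_K} = T^L_\theta = \eta_{L^*}$; since $T^K_{\id_K} = \eta_{K^*}$ by (TM1), this shows $\theta^*$ automatically respects the structure maps and so uniqueness follows from the $K$-algebra uniqueness in Theorem \ref{universal_TM_construction_thm}.

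For functoriality, $F(\id_K) = \id_{K^*}$ because $\id_{K^*}$ trivially satisfies the defining equation for $(\id_K)^*$, namely $T^K_{\id_K \circ \phi} = T^K_\phi = \id_{K^*} \circ T^K_\phi$. For composition, given $\theta_1: K \to L$ and $\theta_2: L \to M$, the composite $\theta_2^* \circ \theta_1^*: K^* \to M^*$ satisfies
\[
T^M_{\theta_2 \circ \theta_1 \circ \phi} \;=\; \theta_2^* \circ T^L_{\theta_1 \circ \phi} \;=\; \theta_2^* \circ \theta_1^* \circ T^K_\phi,
\]
so by the uniqueness just established, $(\theta_2 \circ \theta_1)^* = \theta_2^* \circ \theta_1^*$. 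No step here looks to present a serious obstacle; the only conceptual subtlety is the recognition that $L^*$ can be given a $K$-algebra structure via $T^L_\theta$, after which everything is a direct application of the universal property.
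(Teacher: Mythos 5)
Your proof is correct, but it takes a genuinely different route from the paper. The paper writes down $\theta^*$ explicitly as $T^L_{\theta\circ\ev_K}$, where $\ev_K$ is the evaluation map for $T^K$ from Proposition \ref{utm_admits_eval_map_prop}: the identity $T^L_{\theta\circ\phi}=\theta^*\circ T^K_\phi$ then falls out of (TM2) for $T^L$ applied to the commuting triangle $\theta\circ\ev_K\circ T^K_\phi=\theta\circ\phi$, and uniqueness is one line via (EV2), since $\tilde\theta=\tilde\theta\circ T^K_{\ev_K}=T^L_{\theta\circ\ev_K}$. You instead package $\phi\mapsto T^L_{\theta\circ\phi}$ as a new $K$-Taylor morphism $S$ for $L^*$ equipped with the structure map $T^L_\theta$ --- your verification of (TM1) and (TM2) for $S$ is correct --- and then invoke the universal property of $T^K$ (Theorem \ref{universal_TM_construction_thm}); your reduction of ring-homomorphism uniqueness to $K$-algebra uniqueness via $\theta^*\circ\eta_{K^*}=T^L_\theta$ is also sound. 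What the paper's route buys is a closed formula for $\theta^*$ and a shorter argument; what yours buys is independence from the evaluation-map machinery of Section \ref{eval_maps_section} (indeed the two constructions agree, since by Lemma \ref{tm_evalmap_is_morphism_of_tms_lem} the canonical morphism $T^K\to S$ is $S_{\ev_K}=T^L_{\theta\circ\ev_K}$). One point to make explicit: the factorisation in Theorem \ref{universal_TM_construction_thm} is \emph{stated} only for differential $K$-algebras $A$ and $K$-algebra homomorphisms $\phi$, whereas the lemma needs $T^L_{\theta\circ\phi}=\theta^*\circ T^K_\phi$ for arbitrary differential rings and ring homomorphisms; the theorem's proof does establish this stronger form (via Lemma \ref{rtm_extends_to_all_rings} and (TM2)), but you should either cite that or redo the short extension argument yourself. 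Your explicit check that $F(\id_K)=\id_{K^*}$ is a welcome addition the paper leaves implicit.
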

\begin{proof}
    We claim that $\theta^* \coloneqq T^L_{\theta\, \circ\, \ev_K}$ is such a map. Let $A$ be a differential ring, and $\phi: A \to K$ be a ring homomorphism. Consider the following triangle:
    \[
    \begin{tikzcd}
        A \ar[r, "\theta\, \circ\, \phi"] \ar[d, "T^K_\phi"'] & L \\
        K^* \ar[ur, "\theta\, \circ\, \ev_K"']
    \end{tikzcd}
    \]
    which commutes, as $\theta \circ \ev_K \circ T^K_\phi = \theta \circ \phi$, as $\ev_K$ is the evaluation map for $T^K$. Further, $T^K_\phi$ is differential, so applying $T^L$, we obtain that
    \[
    \begin{tikzcd}
        A \ar[r, "T^L_{\theta\, \circ\, \phi}"] \ar[d, "T^K_\phi"'] & L^* \\
        K^* \ar[ur, "T^L_{\theta\, \circ\, \ev_K}"']
    \end{tikzcd}
    \]
    also commutes, i.e. $\theta^* \circ T^K_\phi = T^L_{\theta\, \circ \,\phi}$ as required.

    For uniqueness, suppose that $\tilde\theta: K^* \to L^*$ is another such map. Then, in particular, 
    \[
    \tilde\theta = \tilde\theta \circ \id_{K^*} = \tilde\theta \circ T^K_{\ev_K} =  T^L_{\theta\, \circ\, \ev_K} = \theta^*
    \]
    as required. 
    
    For functoriality, let $K, L, F$ be differential rings, and let $\theta: K \to L$ and $\tau: L \to F$ be ring homomorphisms. Let $A$ be a differential ring and $\phi: A \to K$ be a ring homomorphism. Observe:
    \[
    \tau^* \circ \theta^* \circ T^K_\phi = \tau^* \circ T^L_{\theta\, \circ\, \phi} = T^F_{\tau\, \circ\, \theta\, \circ\, \phi}.
    \]
    Thus by uniqueness, $\tau^* \circ \theta^* = (\tau \circ \theta)^*$, as required.
\end{proof}

\begin{rmk}
    Suppose $K, L$ are differential rings, and $\phi: K \to L$ is an isomorphism of their underlying rings. Then, $\phi^* = T^L_{\phi \,\circ\, \ev_K}: K^* \to L^*$ is an isomorphism of differential rings (cf. \cite[Corollary 3.3]{SanchezTressl2020}).
\end{rmk}

\begin{thm} \label{utm_functor_is_right_adjoint_thm}
    The functor $F: \DRing_\Ring \to \DRing$ is the right adjoint to $I: \DRing \to \DRing_\Ring$. For objects $A \in \DRing$ and $K \in \DRing_\Ring$, the natural bijection of hom-sets is given by
    \begin{align}
        T^K: \DRing_\Ring(IA, K) &\to \DRing(A, K^*) \\
        (\phi: IA \to K) &\mapsto (T^K_\phi: A \to K^*)
    \end{align}
    with inverse
    \begin{align}
        \ev_K \circ - : \DRing(A, K^*) &\to \DRing_\Ring(IA, K)\\
        (\psi: A \to K^*) &\mapsto (\ev_K \circ \psi: IA \to K).
    \end{align}
\end{thm}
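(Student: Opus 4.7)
The plan is to verify both the bijection of hom-sets and its naturality, with essentially all heavy lifting already done by the preceding lemmas. First I would check that the two candidate maps are well-defined. Given a ring homomorphism $\phi: IA \to K$, the object $A$ is genuinely a differential ring, so $T^K_\phi: A \to K^*$ is by definition a differential ring homomorphism, landing in $\DRing(A, K^*)$. Conversely, given a differential ring homomorphism $\psi: A \to K^*$, the composite $\ev_K \circ \psi$ is a ring homomorphism $A \to K$, which upon applying the inclusion $I$ becomes a morphism in $\DRing_\Ring(IA, K)$.

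Next, I would show these maps are mutually inverse. For one direction, $\ev_K \circ T^K_\phi = \phi$ is immediate from axiom (EV1) applied to the evaluation map $\ev_K: K^* \to K$ established in Proposition \ref{utm_admits_eval_map_prop}. For the other direction, I need $T^K_{\ev_K \circ \psi} = \psi$ whenever $\psi: A \to K^*$ is a differential ring homomorphism. This is precisely the content of Lemma \ref{ev_commutes_tm} applied to $T = T^K$, $L = K^*$, and $\ev = \ev_K$: since $T^K_{\ev_K} = \id_{K^*}$ by (EV2), the lemma yields $T^K_{\ev_K \circ \psi} = \psi$ for every such $\psi$.

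It remains to establish naturality in both $A$ and $K$. For naturality in $A$, given a differential ring homomorphism $\chi: A' \to A$ and $\phi: IA \to K$, I would check that $T^K_{\phi \circ I\chi} = T^K_\phi \circ \chi$; this is just axiom (TM2) for the $K$-Taylor morphism $T^K$ applied to the triangle with $\chi$ on the left and $\phi, \phi \circ \chi$ on the other two sides. For naturality in $K$, given a ring homomorphism $\theta: K \to L$ (a morphism in $\DRing_\Ring$) and $\phi: IA \to K$, I need $T^L_{\theta \circ \phi} = \theta^* \circ T^K_\phi$, where $\theta^* = F(\theta)$; but this is precisely the defining property of $\theta^*$ established in Lemma \ref{UTM_is_functor_lemma}. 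A dual check on the inverse side (namely $\ev_L \circ \theta^* \circ \psi = \theta \circ \ev_K \circ \psi$) follows by applying $\ev_L$ to the identity $\theta^* = T^L_{\theta \circ \ev_K}$ and using (EV1) for $T^L$.

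I do not expect a genuine obstacle here: the statement is essentially the categorical repackaging of the three facts (EV1), (EV2), and (TM2), together with the construction of $F$ on morphisms. The only place where care is needed is in making sure the source category is $\DRing_\Ring$ rather than $\DRing$, i.e. that $\phi: IA \to K$ is allowed to be any ring homomorphism and not required to be differential — but this is exactly why $T^K$ is defined on all such $\phi$ in the first place, and why the formula $\theta^* = T^L_{\theta \circ \ev_K}$ from Lemma \ref{UTM_is_functor_lemma} uses the non-differential map $\theta \circ \ev_K$.
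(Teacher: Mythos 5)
Your proposal is correct and follows essentially the same route as the paper: mutual inverseness via (EV1) together with Lemma \ref{ev_commutes_tm} (the paper compresses both directions into a single citation of that lemma), naturality in $A$ via (TM2), and naturality in $K$ via the defining property of $\theta^*$ from Lemma \ref{UTM_is_functor_lemma}. Your version is slightly more explicit about which axiom handles which direction of the bijection, but there is no substantive difference.
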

\begin{proof}
    The maps above are mutually inverse by Lemma \ref{ev_commutes_tm}. We verify that the bijection is natural:

    Fix morphisms $\theta: K \to L$ in $\DRing_\Ring$ and $\chi: B \to A$ in $\DRing$. Then, for any ring homomorphism $\phi: IA \to K$,
    \[
    \theta^* \circ T^K_\phi = T^L_{\theta \circ \phi}
    \]
    by construction. On the other hand, since $\chi: B \to A$ is differential,
    \[
    T^K_\phi \circ \chi = T^K_{\phi \circ \chi}
    \]
    by (TM2) for $T^K$.
\end{proof}

\begin{rmk}
    For $A \in \DRing$, the unit at $A$ is precisely the structure map $\eta_{A^*} = T_{\id_A}: A \to A^*$. For $B \in \DRing_\Ring$, the counit at $B$ is the evaluation map $\ev_B = \ev_B \circ \id_{B^*}: B^* \to B$.
\end{rmk}

\section{Twisting the Hurwitz Construction} \label{hurwitz_twisting_section}

In this final section, we give a concrete description of the universal Taylor morphism for an arbitrary differential ring $(K, \bdelta)$ by performing a construction analogous to that of the twisted Taylor morphism in \cite[Section 3]{SanchezTressl2020} in the case of differential $\Q$-algebras.

Let $K$ be a $\Q$-algebra, $\bdelta$ be a tuple of commuting derivations on $K$, and let $T$ denote the classical $(K, \bzero)$-Taylor morphism for $(K[[\bt]], \ddbt)$. In \cite{SanchezTressl2020}, the authors construct the twisted Taylor morphism $T^*$, which by Corollary \ref{utm_for_Q_algs_and_constants_cor} is the universal $(K, \bdelta)$-Taylor morphism for $(K[[\bt]], \bdelta + \ddbt)$, by constructing an isomorphism $\theta: (K[[\bt]], \ddbt) \to (K[[\bt]], \bdelta + \ddbt)$ of differential rings, and defining $T^*$ by setting $T^*_\phi = \theta \circ T_\phi$ for all $\phi: A \to K$. 

We will perform an analogous construction based on the Hurwitz morphism. Fix an arbitrary ring $K$, and let $H$ denote the Hurwitz morphism (considered as a $(K, \bzero)$-Taylor morphism) as defined in Example \ref{hurwitz_morphism_eg}.

\begin{defn}
    We say that two families $\bdelta$ and $\bpartial$ of commuting derivations on an arbitrary differential ring $K$ \textbf{commute} if, for any $1 \leq i \leq j \leq m$, we have that $\delta_i\circ\partial_j = \partial_j \circ\delta_i$.

    For a family of commuting derivations $\bdelta = (\delta_1,...,\delta_m)$ on $K$, we also write $\bdelta$ for the family $(\delta_1,...,\delta_m)$ of derivations on $H(K)$, where $\delta_i$ acts on the coefficients of series by $\delta_i$, i.e.
    \[
    \delta_i\left(\sum_\alpha a_\alpha \bt^\alpha\right) = \sum_\alpha \delta_i(a_\alpha) \bt^\alpha.
    \]
    By a routine computation, we can easily verify that $\bdelta$ is a commuting family of derivations on $H(K)$, and $\bdelta$ commutes with $\bd_K$ on $H(K)$.
\end{defn}

\begin{nota}
    For a family $\bdelta$ of commuting derivations on $K$, write $H_\ev^{\bdelta + \bd_K}$ for $H$ applied to the ring homomorphism $\ev: (H(K), \bdelta + \bd_K) \to (K, \bzero)$.
\end{nota}

\begin{lem} \label{H_gives_group_hom_on_derivations_lem}
    Suppose that $\bdelta$ and $\bpartial$ are commuting families of commuting derivations on $K$. Then,
    \[
    H_\ev^{\bdelta + \bpartial + \bd_K} = H_\ev^{\bdelta + \bd_K} \circ H_\ev^{\bpartial + \bd_K}.
    \]
\end{lem}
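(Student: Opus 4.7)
The plan is to verify the equality of ring homomorphisms $H_\ev^{\bdelta + \bpartial + \bd_K} = H_\ev^{\bdelta + \bd_K} \circ H_\ev^{\bpartial + \bd_K}$ from $H(K)$ to $H(K)$ by comparing the coefficient of $\bt^\xi$ on each side for every $s \in H(K)$ and every multi-index $\xi$. Rather than expand everything combinatorially via the multinomial theorem, the cleaner route is to establish an intertwining property for $\rho \coloneqq H_\ev^{\bpartial + \bd_K}$ and then apply axiom (EV1).

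The first step is to show that $\rho$ commutes with the coefficient-wise derivation $\bdelta$ on $H(K)$. Writing
\[
\rho(s) \;=\; \sum_\alpha \ev\bigl((\bpartial + \bd_K)^\alpha s\bigr)\,\bt^\alpha,
\]
and using that $\bdelta$ acts on coefficients and therefore commutes with $\ev$, together with the commutativity of $\bdelta$ with $\bpartial$ (the hypothesis of the lemma) and with $\bd_K$ (as noted at the start of this section), we get $\bdelta \circ \rho = \rho \circ \bdelta$. Combined with the fact that $\rho$ is a differential ring homomorphism $(H(K), \bpartial + \bd_K) \to (H(K), \bd_K)$, i.e. $\bd_K \circ \rho = \rho \circ (\bpartial + \bd_K)$, adding these two identities yields the key intertwining
\[
(\bdelta + \bd_K) \circ \rho \;=\; \rho \circ (\bdelta + \bpartial + \bd_K).
\]

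Iterating this relation componentwise (using that the operators $\delta_i + \dd_{K,i}$ commute pairwise, since $\bdelta$ is commuting and commutes with $\bd_K$) gives $(\bdelta + \bd_K)^\xi \circ \rho = \rho \circ (\bdelta + \bpartial + \bd_K)^\xi$ for every multi-index $\xi$. Applying $\ev$ and invoking $\ev \circ \rho = \ev$ from axiom (EV1) for the Hurwitz morphism (Example \ref{eval_map_egs}(3)), we conclude
\[
\ev\bigl((\bdelta + \bd_K)^\xi\,\rho(s)\bigr) \;=\; \ev\bigl((\bdelta + \bpartial + \bd_K)^\xi s\bigr),
\]
which are precisely the coefficients of $\bt^\xi$ in $H_\ev^{\bdelta + \bd_K}(\rho(s))$ and in $H_\ev^{\bdelta + \bpartial + \bd_K}(s)$, respectively, and the lemma follows.

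The main step is the commutation $\bdelta \circ \rho = \rho \circ \bdelta$; this is not deep, but requires assembling the three separate commutativity facts ($\bdelta$ with $\bpartial$, with $\bd_K$, and with $\ev$) in the right order. No real obstacle should arise beyond this bookkeeping, and the approach has the advantage of avoiding the triple-sum manipulation (with the identity $\binom{\xi}{\eta}\binom{\eta}{\beta} = \binom{\xi}{\xi - \eta,\,\eta - \beta,\,\beta}$) that a direct multinomial expansion would require.
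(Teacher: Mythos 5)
Your proof is correct, but it takes a genuinely different route from the paper, which establishes the identity by brute force: expanding both sides as triple sums over multi-indices and matching the multinomial coefficients $\frac{\alpha!}{(\alpha-\beta-\gamma)!\,\beta!\,\gamma!}$ term by term. Your argument instead isolates the structural content: writing $\rho = H_\ev^{\bpartial + \bd_K}$, the three commutation facts ($\bdelta$ with $\bpartial$ coefficient-wise, $\bdelta$ with the shift $\bd_K$, and $\bdelta$ with $\ev$) combine with the fact that $\rho$ is a differential ring homomorphism $(H(K), \bpartial + \bd_K) \to (H(K), \bd_K)$ to give the intertwining $(\bdelta + \bd_K) \circ \rho = \rho \circ (\bdelta + \bpartial + \bd_K)$; iterating, applying $\ev$, and using $\ev \circ \rho = \ev$ then reads off equality of coefficients. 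All the individual ingredients are available in the paper (the commutation of $\bdelta$ with $\bd_K$ is recorded at the start of Section \ref{hurwitz_twisting_section}, and (EV1) for the Hurwitz morphism in Examples \ref{eval_map_egs}(3) holds for an arbitrary source ring, so it applies to $\phi = \ev$ on $(H(K), \bpartial+\bd_K)$). It is worth noting that your intertwining relation says precisely that $\rho$ is a \emph{differential} ring homomorphism $(H(K), \bdelta+\bpartial+\bd_K) \to (H(K), \bdelta+\bd_K)$ satisfying $\ev\circ\rho=\ev$, so your conclusion is exactly an application of axiom (TM2) for the Hurwitz morphism; this makes the lemma an instance of the functoriality already established, which is arguably more illuminating. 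What the paper's computation buys in exchange is an explicit closed formula for $H_\ev^{\bpartial+\bd_K}(a)$ as an intermediate product, which feeds into the explicit description of the twisted Hurwitz morphism later in the section; your argument proves the identity without producing that formula.
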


\begin{rmk}
    The above statement is directly analogous to \cite[Theorem 3.2]{SanchezTressl2020}, where the Hurwitz morphism $H$ replaces the classical Taylor morphism $T$, and the Hurwitz ring $H(K)$ replaces the standard power series ring $K[[\bt]]$.
\end{rmk}

\begin{proof}
    We verify the above equality by direct computation. Let $a = \sum_\alpha a_\alpha \bt^\alpha$. Then:
    \begin{align}
        H^{\bpartial + \bd_K}_\ev(a) &= \sum_\alpha \ev\left( (\bpartial + \bd_K)^\alpha \left( \sum_\beta a_\beta \bt^\beta \right) \right) \bt^\alpha \\
        &= \sum_\alpha \ev \left( \sum_{\gamma \leq \alpha} \binom{\alpha}{\gamma} \bpartial^\gamma \bd_K^{\alpha-\gamma} \left(\sum_\beta a_\beta \bt^\beta \right)\right) \bt^\alpha \\
        &= \sum_\alpha \ev \left( \sum_{\gamma \leq \alpha} \binom{\alpha}{\gamma} \sum_\beta \bpartial^\gamma (a_{\beta+\alpha-\gamma}) \bt^\beta \right) \bt^\alpha \\
        &= \sum_\alpha \sum_{\gamma \leq \alpha} \binom{\alpha}{\gamma} \bpartial^\gamma(a_{\alpha - \gamma}) \bt^\alpha.
    \end{align}
    Now applying $H_\ev^{\bdelta + \bd_K}$ yields (by substituting into the above, replacing $\bpartial$ with $\bdelta$):
    \begin{align}
        H_\ev^{\bdelta + \bd_K} \circ H_\ev^{\bpartial + \dd_K}(a) &= \sum_\alpha \sum_{\beta \leq \alpha} \binom{\alpha}{\beta} \bdelta^\beta \left( \sum_{\gamma \leq \alpha-\beta} \binom{\alpha-\beta}{\gamma} \bpartial^\gamma(a_{\alpha-\beta-\gamma})\right) \bt^\alpha \\
        &= \sum_\alpha \sum_{\beta+\gamma \leq \alpha} \binom{\alpha}{\beta} \binom{\alpha-\beta}{\gamma} \bdelta^\beta \bpartial^\gamma(a_{\alpha-\beta-\gamma})\bt^\alpha \\
        &= \sum_\alpha \sum_{\beta+\gamma\leq \alpha} \frac{\alpha!}{(\alpha-\beta-\gamma)!\beta!\gamma!} \bdelta^\beta\bpartial^\gamma(a_{\alpha-\beta-\gamma}) \bt^\alpha.
    \end{align}
    We now evaluate the left hand side.
    \begin{align}
        H_\ev^{\bdelta+\bpartial + \bd_K} &= \sum_\alpha \ev\left( (\bdelta+\bpartial + \bd_K)^\alpha \sum_\xi a_\xi \bt^\xi \right) \bt^\alpha \\
        &= \sum_\alpha \ev \left( \sum_{\beta+\gamma \leq \alpha} \binom{\alpha}{\beta+\gamma}\binom{\beta+\gamma}{\beta} \bdelta^\beta \bpartial^\gamma \bd_K^{\alpha-\beta-\gamma} \left(\sum_\xi a_\xi \bt^\xi\right) \right) \bt^\alpha \\
        &= \sum_\alpha \ev\left( \sum_{\beta+\gamma \leq \alpha} \binom{\alpha}{\beta+\gamma}\binom{\beta+\gamma}{\beta} \bdelta^\beta \bpartial^\gamma \left( \sum_\xi a_{\xi+\alpha-\beta-\gamma} \bt^\xi \right)\right) \bt^\alpha \\
        &= \sum_\alpha \sum_{\beta+\gamma \leq \alpha} \binom{\alpha}{\beta+\gamma} \binom{\beta+\gamma}{\beta} \bdelta^\beta \bpartial^\gamma(a_{\alpha-\beta-\gamma}) \bt^\alpha \\
        &= \sum_\alpha \sum_{\beta+\gamma\leq\alpha} \frac{\alpha!}{(\alpha-\beta-\gamma)! \beta! \gamma!} \bdelta^\beta \bpartial^\gamma(a_{\alpha-\beta-\gamma}) \bt^\alpha.
    \end{align}
    Thus the equality holds.
\end{proof}

\begin{cor} \label{hurwitz_twisting_cor}
    For a family of commuting derivations $\bdelta$ on $K$,
    \[
    H_\ev^{\bdelta + \bd_K} : (H(K), \bdelta + \bd_K) \to (K(K), \bd_K)
    \]
    is an isomorphism of differential rings with compositional inverse $H_\ev^{-\bdelta + \bd_K}$.
\end{cor}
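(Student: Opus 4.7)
The plan is to deduce this almost immediately from the preceding Lemma \ref{H_gives_group_hom_on_derivations_lem}, combined with the fact that $\ev: H(K) \to K$ is an evaluation map for the Hurwitz morphism, as verified in Example \ref{eval_map_egs}(3). That example tells us that $H_\ev^{\bd_K} = \id_{H(K)}$, which is the crucial input.

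First I would check that the hypotheses of Lemma \ref{H_gives_group_hom_on_derivations_lem} apply to the pair $\bdelta$ and $-\bdelta$: since $\bdelta$ is a commuting family of commuting derivations on $K$, the same is trivially true of $-\bdelta$, and the two families commute by additivity. Specialising the lemma with $\bpartial = -\bdelta$ then gives, as ring endomorphisms of $H(K)$,
\[
H_\ev^{\bdelta + \bd_K} \circ H_\ev^{-\bdelta + \bd_K} \;=\; H_\ev^{\bdelta + (-\bdelta) + \bd_K} \;=\; H_\ev^{\bd_K} \;=\; \id_{H(K)},
\]
and symmetrically $H_\ev^{-\bdelta + \bd_K} \circ H_\ev^{\bdelta + \bd_K} = \id_{H(K)}$. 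Hence, at the level of underlying rings, $H_\ev^{\bdelta + \bd_K}$ is an automorphism of $H(K)$ with compositional inverse $H_\ev^{-\bdelta + \bd_K}$.

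It only remains to upgrade this ring-theoretic statement to a differential one. By construction, $H_\ev^{\bdelta + \bd_K}$ is the image under the Hurwitz morphism (a $(K, \bzero)$-Taylor morphism for $(H(K), \bd_K)$) of the ring homomorphism $\ev: (H(K), \bdelta + \bd_K) \to (K, \bzero)$, and is therefore a differential ring homomorphism $(H(K), \bdelta + \bd_K) \to (H(K), \bd_K)$. Since any bijective differential ring homomorphism admits a differential inverse, the ring-theoretic inverse $H_\ev^{-\bdelta + \bd_K}$ is automatically a differential ring homomorphism in the reverse direction $(H(K), \bd_K) \to (H(K), \bdelta + \bd_K)$. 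I do not anticipate any real obstacle here: the computational heavy lifting is entirely absorbed into Lemma \ref{H_gives_group_hom_on_derivations_lem}, and the corollary is essentially a formal manipulation with the group-like identity established there.
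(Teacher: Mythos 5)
Your proof is correct and follows essentially the same route as the paper: apply Lemma \ref{H_gives_group_hom_on_derivations_lem} to the commuting pair $\bdelta$, $-\bdelta$ and use that $\ev$ is the evaluation map for $H$ (so $H_\ev^{\bd_K} = \id_{H(K)}$ by (EV2)) to conclude the two maps are mutually inverse. Your extra remarks on why the maps are differential just make explicit what the paper leaves implicit.
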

\begin{proof}
    It is easy to see that $\bdelta$ and $-\bdelta$ commute. Then apply Lemma \ref{H_gives_group_hom_on_derivations_lem} and that $\ev: (H(K), \bd_K) \to (K, \bzero)$ is the evaluation map for $H$.
\end{proof}

This gives us the necessary ingredients with which to construct a twisting of the Hurwitz morphism.

\begin{defn}[Twisted Hurwitz Morphism]
    Let $(K, \bdelta)$ be a differential ring. For any differential ring $(A, \bpartial)$ and ring homomorphism $\phi: A \to K$, define the \textbf{twisted Hurwitz morphism} of $\phi$ as:
    \[
    H^*_\phi = H^{-\bdelta + \bd_K}_\ev \circ H_\phi: (A, \bpartial) \to (H(K), \bdelta + \bd_K).
    \]
    Explicitly computed, for any $a \in A$:
    \[
    H^*_\phi(a) = \sum_\alpha \sum_{\gamma \leq \alpha} (-1)^\gamma \binom{\alpha}{\gamma} \bdelta^\gamma (\phi(\bpartial^{\alpha-\gamma} a)) \bt^\alpha.
    \]
\end{defn}

\begin{thm} \label{twisted_hurwitz_is_utm_thm}
    For a differential ring $(K, \bdelta)$, the twisted Hurwitz morphism $H^*$ is the universal $(K, \bdelta)$-Taylor morphism for $(H(K), \bdelta + \bd_K)$. 
\end{thm}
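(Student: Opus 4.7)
The strategy is to invoke Theorem \ref{universal_tm_char_by_ev_thm}: it suffices to show that $H^*$ is indeed a $(K,\bdelta)$-Taylor morphism for $(H(K), \bdelta + \bd_K)$ and that it admits an evaluation map. Throughout I would exploit the factorisation $H^*_\phi = H^{-\bdelta + \bd_K}_\ev \circ H_\phi$ built into the definition.

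First I would verify that $H^*$ is a $(K,\bdelta)$-Taylor morphism. That $H^*_\phi$ lands in $(H(K), \bdelta + \bd_K)$ as a differential ring homomorphism is immediate from the factorisation: $H_\phi:(A,\bpartial)\to(H(K),\bd_K)$ is differential because $H$ is the Hurwitz morphism, and $H^{-\bdelta + \bd_K}_\ev$ is the inverse of the differential isomorphism supplied by Corollary \ref{hurwitz_twisting_cor}, hence itself differential as a map $(H(K),\bd_K)\to(H(K),\bdelta+\bd_K)$. Axiom (TM2) for $H^*$ follows from (TM2) for $H$ by pre-composing $H_\phi = H_\psi \circ \chi$ with $H^{-\bdelta + \bd_K}_\ev$. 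Axiom (TM1) is a direct computation from the explicit formula: when $\phi$ is differential, substituting $\phi(\bpartial^{\alpha-\gamma}a) = \bdelta^{\alpha-\gamma}\phi(a)$ reduces the coefficient of $\bt^\alpha$ to $\bdelta^\alpha(\phi(a))\cdot\sum_{\gamma\leq\alpha}(-1)^\gamma\binom{\alpha}{\gamma}$, and this alternating binomial sum vanishes unless $\alpha = \bar 0$, in which case it equals $1$. Hence $H^*_\phi = \eta_{H(K)}\circ\phi$.

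Next I would exhibit the evaluation map $\ev:H(K)\to K$, $\sum_\alpha a_\alpha\bt^\alpha\mapsto a_{\bar 0}$, already known to be the evaluation map for the untwisted Hurwitz morphism. For (EV1), applying (EV1) for $H$ twice (once to $H_\phi$, once to $H^{-\bdelta+\bd_K}_\ev$ viewed as the Hurwitz morphism with source $(H(K),-\bdelta+\bd_K)$) yields $\ev\circ H^*_\phi = \ev\circ H^{-\bdelta+\bd_K}_\ev\circ H_\phi = \ev\circ H_\phi = \phi$. For (EV2), specialising the factorisation at $\phi=\ev$ with source $(H(K),\bdelta+\bd_K)$ gives $H^*_\ev = H^{-\bdelta+\bd_K}_\ev \circ H^{\bdelta+\bd_K}_\ev$; Lemma \ref{H_gives_group_hom_on_derivations_lem} applied with $\bpartial = -\bdelta$ collapses this to $H^{\bd_K}_\ev$, which is the identity by (EV2) for the untwisted Hurwitz morphism.

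Theorem \ref{universal_tm_char_by_ev_thm} then forces $H^*$ to be isomorphic to the universal $(K,\bdelta)$-Taylor morphism. The only moderately technical step is (TM1), but the alternating binomial sum identity involved is precisely the one already exploited in Example \ref{eval_map_egs}(2) for the classical twisted Taylor morphism; everything else reduces formally to the Taylor axioms and evaluation identities of the untwisted Hurwitz morphism combined with the group-law of Lemma \ref{H_gives_group_hom_on_derivations_lem}. The conceptual content of the argument is therefore entirely packaged inside Corollary \ref{hurwitz_twisting_cor}, which guarantees that twisting by $-\bdelta$ undoes the twisting by $\bdelta$.
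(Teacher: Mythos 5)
Your proposal is correct, and it follows the paper's overall strategy --- exhibit the constant-term map $\ev$ as an evaluation map for $H^*$ and invoke Theorem \ref{universal_tm_char_by_ev_thm} --- but the execution differs in a way worth noting. The paper verifies (EV1) and (EV2) by direct computation with the explicit series formula for $H^*_\phi$, culminating in the same alternating binomial coefficient identity you mention; in particular (EV2) is a page of coefficient manipulation. You instead work entirely from the factorisation $H^*_\phi = H^{-\bdelta+\bd_K}_\ev\circ H_\phi$: (EV1) reduces to two applications of (EV1) for the untwisted $H$, and (EV2) collapses via Lemma \ref{H_gives_group_hom_on_derivations_lem} (equivalently, Corollary \ref{hurwitz_twisting_cor}) to $H^{\bd_K}_\ev = \id$. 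This is cleaner, since the binomial computation has already been paid for once inside the proof of Lemma \ref{H_gives_group_hom_on_derivations_lem} and need not be repeated. You also explicitly check that $H^*$ satisfies (TM1) and (TM2) and that $H^*_\phi$ really lands differentially in $(H(K),\bdelta+\bd_K)$; the paper takes this for granted, yet it is a genuine prerequisite for applying Theorem \ref{universal_tm_char_by_ev_thm}, so your care here is a strict improvement rather than redundancy. One trivial slip: in your (TM2) argument you say ``pre-composing'' $H_\phi = H_\psi\circ\chi$ with $H^{-\bdelta+\bd_K}_\ev$ where you mean composing on the left (post-composing); the mathematics is unaffected.
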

\begin{proof}
    By Theorem \ref{universal_tm_char_by_ev_thm}, it suffices to show that $H^*$ admits an evaluation map. Let $\ev: H(K) \to K$ be the map sending a series to its constant term. We claim that this is the desired evaluation map.

    Let $(A, \bpartial)$ be a differential ring, and $\phi: A \to K$ a ring homomorphism. Then, for any $a \in A$,
    \begin{align}
        \ev\circ H^*_\phi(a) &= \ev \left( \sum_\alpha \sum_{\gamma \leq \alpha} (-1)^\gamma \binom{\alpha}{\gamma} \bdelta^\gamma (\phi(\bpartial^{\alpha-\gamma} a)) \bt^\alpha \right)  \\
        &= \phi(a)
    \end{align}
    as this is the constant term of the above series, so (EV1) holds. For (EV2), let $a = \sum_\alpha a_\alpha \bt^\alpha$. Let $H^*_\ev(a) = \sum_\alpha b_\alpha \bt^\alpha$. Using the explicit form of $H^*_\ev$, we see the following:
    \begin{align}
        b_\alpha &= \sum_{\gamma\leq \alpha} (-1)^\gamma \binom{\alpha}{\gamma} \bdelta^\gamma \ev \left((\bdelta + \bd_K)^{\alpha-\gamma} \sum_\xi a_\xi \bt^\xi \right) \\
        &= \sum_{\gamma\leq\alpha} (-1)^\gamma \binom{\alpha}{\gamma} \bdelta^\gamma \ev \left(\sum_{\beta \leq \alpha-\gamma} \binom{\alpha-\gamma}{\beta} \bdelta^{\alpha-\gamma-\beta} \bd_K^\beta \sum_\xi a_\xi \bt^\xi \right) \\
        &= \sum_{\gamma\leq\alpha} (-1)^\gamma \binom{\alpha}{\gamma} \bdelta^\gamma \ev \left( \sum_\xi \sum_{\beta\leq \alpha-\gamma} \binom{\alpha-\gamma}{\beta} \bdelta^{\alpha-\gamma-\beta}(a_{\xi+\beta}) \bt^\xi \right) \\
        &= \sum_{\gamma\leq \alpha} \sum_{\beta\leq \alpha-\gamma} (-1)^\gamma \binom{\alpha}{\gamma} \binom{\alpha-\gamma}{\beta} \bdelta^{\alpha-\beta}(a_\beta)\\
        &= \sum_{\beta+\gamma \leq \alpha} (-1)^\gamma \binom{\alpha}{\gamma} \binom{\alpha-\gamma}{\beta} \bdelta^{\alpha-\beta}(a_\beta).
    \end{align}
    Consider the coefficient $c_{\alpha,\beta}$ of $\bdelta^{\alpha-\beta}(a_\beta)$ in the above expression. Fix $\alpha, \beta$, and observe the following:
    \begin{align}
        c_{\alpha,\beta} &= \sum_{\gamma\leq \alpha-\beta} (-1)^\gamma \binom{\alpha}{\gamma} \binom{\alpha-\gamma}{\beta} \\
        &= \sum_{\gamma \leq \alpha-\beta} (-1)^\gamma \frac{\alpha!}{\gamma! \beta! (\alpha-\beta-\gamma)!} \\
        &= \frac{\alpha!}{(\alpha-\beta)!\beta!} \sum_{\gamma\leq\alpha-\beta} (-1)^\gamma \binom{\alpha-\beta}{\gamma}
    \end{align}
    which is 0 unless $\alpha=\beta$, in which case it is 1. Thus, $H^*_\ev = \id_{H(K)}$ as required.
\end{proof}

\subsection{Differentially Large Fields of Positive Characteristic}
We can reasonably ask, whether in characteristic $p \neq 0$, differentially large fields can be characterised in terms of an existential closure condition in some extension which admits a Taylor morphism, analogously to the characteristic 0 case. This turns out to be false.

\begin{lem}\label{hurwitz_ring_has_no_diff_prime_ideals_lem}
    For any differential field $(K, \bdelta)$ of characteristic $p > 0$, the (twisted) Hurwitz series ring $(H(K), \bdelta + \bd_K)$ has no differential prime ideals.
\end{lem}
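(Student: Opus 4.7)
The plan is to exploit the pathological multiplicative behaviour of $H(K)$ in characteristic $p$, combined with the action of the derivation $\bd_K$ on the monomials $t_i$, to show that any differential prime ideal must contain $1$.

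First I would recall, as the paper already observes in its remark following Example \ref{hurwitz_morphism_eg}, that for each $i = 1,\dots,m$ the element $t_i \in H(K)$ (meaning the Hurwitz series whose only nonzero coefficient is a $1$ in position $\hat{i}$) satisfies $(t_i)^p = 0$. Indeed, a short induction using the Hurwitz multiplication gives $(t_i)^n = n!\,\bt^{n\hat{i}}$, and $p! \equiv 0 \pmod p$. Hence every $t_i$ is nilpotent, and so lies in every prime ideal of $H(K)$.

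Next I would compute $(\bdelta + \bd_K)_i(t_i)$. The component $\delta_i$ acts only on coefficients, and the only nonzero coefficient of $t_i$ is $1 \in K$, so $\delta_i(t_i) = 0$. On the other hand, directly from the definition $\bd_{K,i}(\sum_\alpha a_\alpha \bt^\alpha) = \sum_\alpha a_{\alpha+\hat{i}}\bt^\alpha$, we get $\bd_{K,i}(t_i) = 1$, the multiplicative identity of $H(K)$. Thus $(\bdelta+\bd_K)_i(t_i) = 1$.

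Now suppose, for contradiction, that $P$ is a differential prime ideal of $(H(K), \bdelta+\bd_K)$. By the first step, $t_i \in P$ for each $i$. Since $P$ is closed under $(\bdelta+\bd_K)_i$, we obtain $1 = (\bdelta+\bd_K)_i(t_i) \in P$, so $P = H(K)$, contradicting primality. Hence no such $P$ exists.

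There is no real obstacle; the argument is essentially two lines once one notices that the nilpotence of $t_i$ forces it into every prime ideal and that $\bd_{K,i}$ then drags $1$ in after it. The only thing to be slightly careful about is the verification that the $\delta_i$-contribution to $(\bdelta+\bd_K)_i(t_i)$ vanishes, which is immediate from $\delta_i(1) = 0$.
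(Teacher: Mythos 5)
Your proof is correct. Both you and the paper hinge on the same key computation, namely $(\delta_i + \dd_{K,i})(t_i) = 1$, but you reach the conclusion by a more elementary and self-contained route. The paper first invokes Keigher's structure results (\cite[Proposition 3.4]{Keigher1997} and \cite[Lemma 3.2]{Keigher1975}) to identify the nilradical $\m = \{\sum_\alpha a_\alpha \bt^\alpha : a_{\bar 0} = 0\}$ as the \emph{unique} prime ideal of $H(K)$ when $K$ is a field, and then observes that $\m$ fails to be differential. You instead only need the single nilpotent element $(t_i)^p = p!\,\bt^{p\hat{i}} = 0$ (already noted in the remark after Example \ref{hurwitz_morphism_eg}), which forces $t_i$ into \emph{every} prime ideal, after which differentiation drags in $1$. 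This buys you two things: no external citations are needed, and the hypothesis that $K$ is a field becomes superfluous --- your argument shows that for any differential \emph{ring} of characteristic $p$, the twisted Hurwitz ring has no differential prime ideals. The paper's route, on the other hand, yields the stronger structural information that $\m$ is the unique prime and that everything outside it is a unit, which is not needed for this lemma but is of independent interest.
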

\begin{proof}
    By \cite[Proposition 3.4]{Keigher1997} (applied $m$ times), and since $K$ is a field, the nilradical of $H(K)$ is
    \[
    \m \coloneqq N(H(K)) =  \left\{ \sum_\alpha a_\alpha \bt^\alpha : a_{\bar{0}} = 0\right\}
    \]
    where $\bar{0}$ denotes the $m$-tuple $(0,...,0)$. By \cite[Lemma 3.2]{Keigher1975}, every element $a \in H(K) \setminus \m$ is a unit, thus $\m$ is the unique prime (and maximal) ideal of $H(K)$. It is easy to see that $\m$ is not differential, for example, $(\delta_1 + \dd_{K, 1})(t_1) = 1 \not\in \m$. 
\end{proof}

\begin{prop}
    For a differential field $(K, \bdelta)$ of characteristic $p > 0$, no integral domain admits a $K$-Taylor morphism. In particular, if $(L, \bpartial)$ is a differential $K$-algebra which admits a $K$-Taylor morphism, then $(K, \bdelta)$ is not existentially closed in $(L, \bpartial)$.
\end{prop}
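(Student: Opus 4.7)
The plan is to combine two ingredients already established: the concrete description of the universal $(K,\bdelta)$-Taylor morphism as the twisted Hurwitz morphism (Theorem \ref{twisted_hurwitz_is_utm_thm}), and the fact that $(H(K), \bdelta + \bd_K)$ admits no differential prime ideals in positive characteristic (Lemma \ref{hurwitz_ring_has_no_diff_prime_ideals_lem}). Any $K$-Taylor morphism is forced to factor through the twisted Hurwitz morphism via a differential $K$-algebra homomorphism, and analysing the kernel of that factoring map will obstruct the target from being a domain.

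First I would dispense with the claim on integral domains. Suppose $(L, \bpartial)$ is a differential $K$-algebra which is an integral domain and admits a $K$-Taylor morphism $T$. By Theorem \ref{universal_TM_construction_thm} there is a unique differential $K$-algebra homomorphism $\theta: K^* \to L$ through which $T$ factors, and by Theorem \ref{twisted_hurwitz_is_utm_thm} we may identify $K^* \cong (H(K), \bdelta + \bd_K)$. Since $\theta(1) = 1 \neq 0$, the kernel $\ker(\theta)$ is a proper ideal; since $L$ is a domain, $\ker(\theta)$ is prime; and since $\theta$ is a differential ring homomorphism, $\ker(\theta)$ is a differential ideal. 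This produces a differential prime ideal of $(H(K), \bdelta + \bd_K)$, which directly contradicts Lemma \ref{hurwitz_ring_has_no_diff_prime_ideals_lem}.

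For the ``in particular'' clause, assume $(L, \bpartial)$ admits a $K$-Taylor morphism and suppose, aiming for a contradiction, that $(K, \bdelta)$ is existentially closed in $(L, \bpartial)$. The structure map $\eta_L: K \to L$ is injective (since $K$ is a field and $\eta_L(1) = 1$), so I can regard $(K, \bdelta)$ as a differential subring of $(L, \bpartial)$. The class of integral domains is axiomatised in the language of rings by the universal sentence $\forall x\, \forall y\, (xy = 0 \to x = 0 \vee y = 0)$, and universal sentences transfer from $K$ to $L$ whenever $K$ is existentially closed in $L$. Since $K$ is a field, this forces $L$ to be an integral domain, contradicting the first part.

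The substantive inputs are Lemma \ref{hurwitz_ring_has_no_diff_prime_ideals_lem} and Theorem \ref{twisted_hurwitz_is_utm_thm}; everything else is a short formal manipulation, and I do not expect a genuine obstacle. The only point that warrants care is the final step, where one must note that ``having no zero divisors'' is a property expressible purely in the language of rings, so existential closure in the language of differential rings certainly suffices to transport it from $K$ to $L$.
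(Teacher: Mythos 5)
Your proposal is correct and follows essentially the same route as the paper: identify the universal Taylor morphism with the twisted Hurwitz morphism, factor any $K$-Taylor morphism through $(H(K), \bdelta + \bd_K)$, and observe that a domain target would make the kernel of the factoring map a differential prime ideal, contradicting Lemma \ref{hurwitz_ring_has_no_diff_prime_ideals_lem}. Your spelled-out justification of the ``in particular'' clause (universal sentences transfer up along an existentially closed extension, so $L$ would be a domain) is exactly the step the paper leaves implicit.
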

\begin{proof}
    Let $(L, \bpartial)$ be a differential $K$-algebra which admits a $K$-Taylor morphism. By Theorem \ref{twisted_hurwitz_is_utm_thm}, $(L, \bpartial)$ is a differential $(H(K), \bdelta + \bd_K)$-algebra. By Lemma \ref{hurwitz_ring_has_no_diff_prime_ideals_lem}, no differential $(H(K), \bdelta + \bd_K)$-algebra is an integral domain, as the image of $H(K)$ under any differential ring homomorphism is not a domain. The existential closure statement follows immediately.
\end{proof}

\bibliography{references}

@article{Keigher1975,
    title = {{Adjunctions and comonads in differential algebra}},
    year = {1975},
    journal = {Pacific Journal of Mathematics},
    author = {Keigher, William},
    number = {1},
    pages = {99--112},
    volume = {59},
    url = {http://msp.org/pjm/1975/59-1/p11.xhtml},
    doi = {10.2140/pjm.1975.59.99},
    issn = {0030-8730}
}

@book{MacLane1978,
    title = {{Categories for the Working Mathematician}},
    year = {1978},
    author = {Mac Lane, Saunders},
    edition = {2nd},
    pages = {115--116},
    series = {Graduate Texts in Mathematics},
    volume = {5},
    publisher = {Springer New York},
    url = {http://link.springer.com/10.1007/978-1-4757-4721-8},
    address = {New York, NY},
    isbn = {978-1-4419-3123-8},
    doi = {10.1007/978-1-4757-4721-8}
}

@book{RydeheardBurstall1988,
    title = {{Computational Category Theory}},
    year = {1988},
    author = {Rydeheard, D E and Burstall, R M},
    publisher = {Prentice Hall},
    url = {http://www.cs.man.ac.uk/~david/categories/},
    address = {New York},
    isbn = {0131627368}
}

@article{SanchezTressl2020,
    title = {{Differentially Large Fields}},
    year = {2024},
    journal = {Algebra {\&} Number Theory},
    author = {Le{\'{o}}n S{\'{a}}nchez, Omar and Tressl, Marcus},
    volume = {18},
    number = {2},
    url = {http://arxiv.org/abs/2005.00888},
    eprint = {arXiv:2005.00888}
}

@article{SanchezTressl2023,
    title={On ordinary differentially large fields}, 
      author={León Sánchez, Omar and Marcus Tressl},
      year={2024},
      journal = {Canadian Journal of Mathematics},
      pages = {1 - 22},
      eprint={arXiv:2307.12977},
      archivePrefix={arXiv},
      primaryClass={math.AG},
      url={https://arxiv.org/abs/2307.12977}, 
}

@article{Umemura1996,
   author = {Hiroshi Umemura},
   doi = {10.1017/S0027763000006012},
   issn = {00277630},
   journal = {Nagoya Mathematical Journal},
   pages = {1-58},
   publisher = {Nagoya University},
   title = {Galois theory of aigebraic and differential equations},
   volume = {144},
   year = {1996}
}

@article{Keigher1997,
    title = {{On the ring of Hurwitz series}},
    year = {1997},
    journal = {Communications in Algebra},
    author = {Keigher, William F.},
    number = {6},
    pages = {1845--1859},
    volume = {25},
    doi = {10.1080/00927879708825957},
    issn = {00927872}
}
\bibliographystyle{halpha}

\end{document}